%


\documentclass{SCAE}
\numberwithin{equation}{section}
\begin{document}

\Year{2016} %
\Month{January}
\Vol{59} %
\No{1} %
\BeginPage{1} %
\EndPage{XX} %
\AuthorMark{Last1 F N {\it et al.}} \ReceivedDay{November 17, 2014}
\AcceptedDay{January 22, 2015} \PublishedOnlineDay{; published
online January 22, 2016}
\DOI{ 10.1007/s11425-016-0171-4} 

\title[Stochastic Calculus with respect to $G$-Brownian Motion Viewed through Rough Paths]{Stochastic Calculus with respect to $G$-Brownian Motion Viewed through Rough Paths}{}


\author[1,2]{Peng, ShiGe}{Corresponding author}
\author[1,2]{Zhang, HuiLin}{}

\address[{\rm1}]{School of Mathematics, Shandong University, Jinan {\rm 250100}, China;}
\address[{\rm2}]{Institute for Advanced Research, Shandong University, Jinan {\rm 250100}, China;}
 \Emails{ peng@sdu.edu.cn,
huilinzhang2014@gmail.com}\maketitle


 {\begin{center}
\parbox{14.5cm}{\begin{abstract}
 In this paper, we study rough path
properties of stochastic integrals of It\^{o}'s type and
Stratonovich's type with respect to $G$-Brownian motion. The
roughness of $G$-Brownian Motion is estimated and then the pathwise
Norris lemma in $G$-framework is obtained.\vspace{-3mm}
\end{abstract}}\end{center}}

 \keywords{rough paths, roughness of $G$-Brownian
motion, Norris lemma.}

 \MSC{60H05,60G17}

\renewcommand{\baselinestretch}{1.2}
\begin{center} \renewcommand{\arraystretch}{1.5}
{\begin{tabular}{lp{0.8\textwidth}} \hline \scriptsize {\bf
Citation:}\!\!\!\!&\scriptsize Peng, S G, Zhang, H L.
\makeatletter\@titlehead. Sci China Math, 2016, 59,
 doi:~\@DOI\makeatother\vspace{1mm}
\\
\hline
\end{tabular}}\end{center}

\baselineskip 11pt\parindent=10.8pt  \wuhao

\newtheorem{thm}{Theorem}[section]
\newtheorem{assumption}[thm]{Assumption}
\newtheorem{lem}[thm]{Lemma}
\newtheorem{coro}[thm]{Corollary}
\newtheorem{rem}[thm]{Remark}
\newtheorem{prop}[thm]{Proposition}

\newtheorem{defi}[thm]{Definition}
\newcommand{\EC}{\hat{\mathbb{E}}_\mathbb{C}}
\newcommand{\HC}{\mathcal {H}_\mathbb{C}}
\newcommand{\OHEC}{(\Omega,\,\HC,\,\EC)}
\newcommand{\E}{\hat{\mathbb{E}}}
\newcommand{\R}{\mathbb{R}}
\newcommand{\OHE}{(\Omega,\,\mathcal{H},\,\E)}
\newcommand{\X}{\mathbb{X}}
\newcommand{\BX}{\mathbf{X}}
\newcommand{\K}{\mathbb{K}}
\newcommand{\oc}{\mathcal {C}}
\newcommand{\FC}{\mathscr{C}}
\newcommand{\B}{\mathbb{B}}
\newcommand{\BB}{\mathbf{B}}
\newcommand{\op}{\mathcal{P}}
\newcommand{\FD}{\mathscr{D}}
\newcommand{\oq}{\mathcal{Q}}
\newcommand{\oor}{\mathcal {R}}
\newcommand{\hc}{\hat{c}}
\newcommand{\vep}{\varepsilon}
\newcommand{\tX}{\tilde{X}}
\newcommand{\te}{\tilde{\vep}}
\newcommand{\tb}{\tilde{b}}
\newcommand{\tA}{\tilde{A}}

\section{Introduction}

Since Pardoux and Peng \cite{PP90}, backward stochastic differential
equations(BSDEs) receive much attention and are widely applied in
many areas such as stochastic control, financial mathematics, PDEs
(see \cite{KPQ95}, \cite{MY97}, \cite{PP94}, \cite{P90}, \cite{P97}
for example). However, BSDEs fail to give a probabilistic
explanation to fully nonlinear PDEs. Motivated by such disadvantages
of BSDEs and applications in financial mathematics, $G$-expectation
theory was introduced by Peng in \cite{P07a}, \cite{P08a}.
$G$-expectation is a time consistent sublinear expectation, which is
obtained from a fully nonlinear parabolic PDE, called $G$-heat
equation, with the canonical process $B_t$ as $G$-Brownian motion.
Stochastic analysis and the corresponding BSDEs in $G$-framework are
established in \cite{P07a}, \cite{P08a},
\cite{P10}, \cite{HJPS14a}, \cite{HJPS14b}.\\

Rough path theory was introduced by Lyons in his pioneer work
\cite{L98}, to give a well defined integration when the driving path
is not smooth (with $p$-variation for $p \geq 2$). The universal
limit theorem for differential equations driven by rough paths was
obtained and the continuity of It\^{o}-Lyons map for the
corresponding rough differential equations (RDEs for short) was
firstly established by Lyons. Later, Gubinelli expanded integrands
of rough integral from one-forms to controlled paths(see \cite{G04},
\cite{G10}). Geng et al first investigated rough path properties of
$G$-Brownian motion in \cite{GQY14}. Firstly, $G$-Brownian motion is
lifted as geometric rough paths. Then, some basic relations between
SDEs and RDEs driven by $G$-Brownian motion were established. These
results allowed them to prove the existence and uniqueness theorem
of SDEs driven by $G$-Brownian motion on differentiable manifolds.\\

A natural question is what is the relation between rough integrals
and stochastic integrals with respect to $G$-Brownian motion.
Furthermore, does the $G$-Brownian motion possesses the roughness
pathwisely? In this paper we study the rough path properties based
on the $\alpha$-H\"{o}lder continuity of $G$-Brownian motion, of
which the enhancement could be completed by a generalized
Kolmogorov's criterion for rough paths under $G$-expectation
framework, which is more direct and probabilistic compared with
\cite{GQY14}. Moreover, the cross variation of It\^{o}'s process
under $G$-Brownian motion framework is studied, through which the
Stratonovich integral is defined. Then, the relation among rough
integral, It\^{o} integral and Stratonovich integral with respect to
$G$-Brownian motion is established. At last, the roughness of
$G$-Brownian motion is calculated and then the Norris lemma for
stochastic integral with respect to $G$-Brownian motion is obtained.
Further work about applications in finance such as no
arbitrage hedging and superhedging could be possibly available in later papers by authors. \\

The paper is organized as follows. In Section 2, we recall some
basic definitions and results in $G$-expectation theory and rough
path theory. Then in Section 3, $G$-Brownian motion is lifted as
rough paths, and It\^{o} integral with respect to $G$-Brownian
motion is proved to be equivalent to the corresponding rough
integral. Then, the quadratic variation of $G$-It\^{o} process is
introduced and the Stratonovich integral with respect to
$G$-Brownian motion is defined. Similarly, the equivalence between
$G$-Stratonovich integral and the corresponding rough integral is
established. In Section 4, the $\theta$-H\"{o}lder roughness of
$G$-Brownian motion is studied, and then the pathwise Norris lemma
in $G$-framework is obtained.

\section{Preliminaries about the $G$-expectation and Rough Paths}
In this part, we give some definitions and results of
$G$-expectation and rough path theories. The proofs can be found in
\cite{FH14}, \cite{LQ02}, \cite{P07a}, \cite{P10}.


\subsection{The rough path theory}

For rough path theory presented in this paper, we adopt the
framework of Friz and Hairer \cite{FH14}, see also Gubinelli
\cite{G04}. \\

Denote by $\R^m \otimes \R^n$ the algebraic tensor of two Euclidean
spaces. For any path on some interval $[0,T]$ with values in a
$\R^d,$ its $\alpha$-H\"{o}lder norm(semi-norm) is defined by
$$
\|X\|_\alpha =\sup_{0\leq s <t \leq T}
\frac{|X_{s,t}|}{|t-s|^\alpha},
$$
where $X_{s,t}=X_t - X_s,$ for any path $X.$\\

 Denote $\oc^\alpha([0,T],\R^d)$ as the space of paths with finite
$\alpha$-H\"{o}lder norm and values in $\R^d.$ Similarly, a mapping
$\mathbb{X}$ from $[0,T]^2$ to $\R^d\otimes \R^d$ is attached with
norm
$$
\|\X\|_{2\alpha}= \sup_{0\leq s \neq t \leq T}
\frac{|\X_{s,t}|}{|t-s|^{2\alpha}},
$$
whenever it's finite.\\

A rough path on some interval $[0,T]$ with values in $\R^d$ includes
a ``rough'' continuous path $X:[0,T] \rightarrow \R^d,$ along with
its ``iterated integration'' part $\mathbb{X}:[0,T]^2 \rightarrow
\R^d\otimes \R^d,$ which satisfies ``Chen's identity'',
\begin{equation}\label{chen}
\X_{s,t}-\X_{s,u}-\X_{u,t}= X_{s,u} \otimes X_{u,t},
\end{equation}
and H\"{o}lder continuity.

In the sequel, suppose $\alpha \in (\frac13,\frac12)$ for the need of rough integral with respect to $G$-Brownian motion.\\

\begin{defi}\quad
For a fixed $\alpha ,$ the space of rough paths
$\mathscr{C}^\alpha([0,T],\R^d)$ on $[0,T]$ consists of pairs
$(X,\X)$ satisfying ``Chen's identity''\eqref{chen} and the
condition of finite $\alpha$-H\"{o}lder norm and
$2\alpha$-H\"{o}lder norm respectively for $X$ and $\X.$ For any
$\BX:=(X,\X) \in \mathscr{C}^\alpha([0,T],\R^d),$ define its
semi-norm as the following
$$
\|\BX \|_{\FC^{\alpha}}:= \| X\|_{\alpha}+(\| \X \|_{2\alpha}
)^{\frac12}.
$$

\end{defi}

\begin{defi}\quad
A path $Y \in \oc^\alpha([0,T],\R^m)$ is said to be controlled by a
given path $X \in \oc^\alpha([0,T],\R^d),$ if there exists $Y' \in
\oc^\alpha([0,T],\mathcal {L}(\R^d,\R^m)),$ such that the remainder
term
$$
R_{s,t}^Y:=Y_{s,t}-Y'_s X_{s,t},
$$
satisfies $\|R^Y\|_{2\alpha} < \infty.$

\end{defi}

Here $\mathcal {L}(\R^d,\R^m)$ means the space of linear functions
from $\R^d$ to $\R^m,$ which is indeed $\R^{dm}.$ Denote the
collection of controlled rough paths by $\mathcal
{D}^{2\alpha}_X([0,T],\R^m).$ In addition, $Y'$ is called the
$Gubinelli \ derivative$ of $Y.$ For $(Y,Y') \in \mathcal
{D}^{2\alpha}_X([0,T],\R^m),$ we define its semi-norm by
$\|Y,Y'\|_{X,2\alpha}:=\|Y'\|_\alpha + \|R^Y\|_{2\alpha}.$

For example, given any $F \in \oc_b^2(\R^d,\R^m),$ the set of
bounded functions from $\R^d$ to $\R^m$ with bounded derivatives up
to order 2, one can easily check that $(Y,Y'):=(F(X),DF(X)) \in
\mathcal {D}^{2\alpha}_X([0,T],\R^m).$ In general, $Y'$ is not
uniquely determined by $Y,$ especially when $X$ is rather smooth.
However, if the underlying path $X$ is truly rough, $Y'$ can be
uniquely decided by $Y$ (see \cite{FS13}, \cite{FH14} for details).

The next theorem for the definition of rough integral based on
controlled paths is obtained in \cite{G04}, also see \cite{FH14},
\cite{L98}, \cite{LQ02}.

\begin{thm}{(\bf\small\itshape{Gubinelli,Lyons})}\quad
Suppose $\BX \in \FC^\alpha ([0,T],\R^d),$ and $(Y,Y') \in \mathcal
{D}^{2\alpha}_X([0,T],\mathcal {L}(\R^d,\R^n)).$ Then the following
compensated Riemann sum converges.
\begin{equation}
\int_0^T Y d\BX := \lim_{|\mathcal {P}|\rightarrow 0} \sum_{(s,t)
\in \mathcal{P}} (Y_s X_{s,t}+Y'_s \X_{s,t}),
\end{equation}
where $\op$ are partitions of $[0,T],$ with modulus $|\op|
\rightarrow 0.$ Furthermore, one has the bound
\begin{equation}
|\int_s^t Y_r d\BX_r -Y_sX_{s,t}- Y'_s \X_{s,t}| \leq K
(\|X\|_\alpha \|R^Y\|_{2\alpha} + \|\X\|_{2\alpha} \| Y'
\|_{\alpha})|t-s|^{3\alpha},
\end{equation}
where $K$ is a constant depending only on $\alpha.$
\end{thm}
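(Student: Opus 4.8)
The plan is to prove the Gubinelli--Lyons theorem by the classical sewing argument. First I would define, for any $(s,t)$ with $s<t$, the local approximation $\Xi_{s,t} := Y_s X_{s,t} + Y'_s \X_{s,t}$, so that the compensated Riemann sum is precisely $\sum_{[u,v]\in\op} \Xi_{u,v}$. The whole argument then reduces to estimating the defect of $\Xi$ under refinement, i.e. bounding $\delta\Xi_{s,u,t} := \Xi_{s,t} - \Xi_{s,u} - \Xi_{u,t}$ for $s<u<t$, and invoking the Sewing Lemma: if $|\delta\Xi_{s,u,t}| \lesssim |t-s|^{\gamma}$ for some $\gamma>1$, then $\lim_{|\op|\to 0}\sum \Xi$ exists and the limit $\mathcal{I}_{s,t}$ satisfies $|\mathcal{I}_{s,t}-\Xi_{s,t}| \lesssim |t-s|^{\gamma}$ with constant $(1-2^{1-\gamma})^{-1}$.

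The key computation is therefore the expansion of $\delta\Xi_{s,u,t}$. Using $X_{s,t}=X_{s,u}+X_{u,t}$ one gets
\begin{align}
\delta\Xi_{s,u,t} &= Y_s X_{s,t} + Y'_s\X_{s,t} - Y_s X_{s,u} - Y'_s\X_{s,u} - Y_u X_{u,t} - Y'_u\X_{u,t}\nonumber\\
&= -(Y_u - Y_s) X_{u,t} + Y'_s(\X_{s,t}-\X_{s,u}) - Y'_u\X_{u,t}.\nonumber
\end{align}
Now I substitute Chen's identity \eqref{chen}, $\X_{s,t}-\X_{s,u} = \X_{u,t} + X_{s,u}\otimes X_{u,t}$, and the controlled-path decomposition $Y_{u}-Y_{s} = Y_{s,u} = Y'_s X_{s,u} + R^Y_{s,u}$. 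The terms $Y'_s X_{s,u}\otimes X_{u,t}$ coming from the two substitutions cancel, leaving
$$
\delta\Xi_{s,u,t} = -R^Y_{s,u} X_{u,t} - (Y'_u - Y'_s)\,\X_{u,t}.
$$
Taking norms: $|R^Y_{s,u} X_{u,t}| \le \|R^Y\|_{2\alpha}\|X\|_\alpha |u-s|^{2\alpha}|t-u|^\alpha \le \|R^Y\|_{2\alpha}\|X\|_\alpha |t-s|^{3\alpha}$, and $|(Y'_u-Y'_s)\X_{u,t}| \le \|Y'\|_\alpha\|\X\|_{2\alpha}|u-s|^\alpha |t-u|^{2\alpha} \le \|Y'\|_\alpha\|\X\|_{2\alpha}|t-s|^{3\alpha}$. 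Since $\alpha>\frac13$, the exponent $\gamma=3\alpha>1$, so the Sewing Lemma applies and delivers both the convergence of the compensated Riemann sum and the asserted bound with $K=K(\alpha)=(1-2^{1-3\alpha})^{-1}$ (absorbing the two structural terms into a single constant).

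The only genuine obstacle is establishing the Sewing Lemma itself, if one does not wish to quote it: one shows that for a fixed partition, merging two adjacent subintervals changes the Riemann sum by at most $C|t-s|^\gamma$ applied to the merged interval, then iterates the dyadic-type refinement so that the successive differences form a geometric series in $2^{1-\gamma}<1$, yielding a Cauchy sequence whose limit is partition-independent. Everything else --- the cancellation of the $Y'_s X_{s,u}\otimes X_{u,t}$ terms, the H\"older bookkeeping, the identification of $K$ --- is routine. I would also remark that additivity of the limit, $\mathcal{I}_{s,t} = \mathcal{I}_{s,u}+\mathcal{I}_{u,t}$, is automatic from the construction, and that $(\int_0^\cdot Y\,d\BX, Y)$ is itself a controlled path with Gubinelli derivative $Y$, though only the stated bound is required here.
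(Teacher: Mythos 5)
Your proposal is correct, and it is essentially the canonical proof: the paper itself states this theorem without proof, citing Gubinelli and Friz--Hairer, and your argument (local germ $\Xi_{s,t}=Y_sX_{s,t}+Y'_s\X_{s,t}$, computation of $\delta\Xi_{s,u,t}=-R^Y_{s,u}X_{u,t}-Y'_{s,u}\X_{u,t}$ via Chen's identity and the controlled decomposition, then the Sewing Lemma with exponent $3\alpha>1$) is exactly the route taken in those references. The algebra is right, including the cancellation of $Y'_s(X_{s,u}\otimes X_{u,t})$, which does require reading $(Y'_sX_{s,u})X_{u,t}$ through the identification $\mathcal{L}(\R^d,\mathcal{L}(\R^d,\R^n))\cong\mathcal{L}(\R^d\otimes\R^d,\R^n)$, as you implicitly do. The only place where your write-up is thinner than a full proof is the Sewing Lemma itself: your ``merge two adjacent intervals and iterate dyadically'' sketch proves convergence along a particular refining sequence, and one still needs the standard successive point-removal estimate (remove a partition point whose neighbouring intervals have total length at most $2|t-s|/(N-1)$, sum the resulting geometric-type series since $3\alpha>1$) to get a bound uniform over \emph{all} partitions and hence a partition-independent limit as $|\op|\to 0$, which is what the theorem asserts. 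With that standard lemma quoted or proved in full, your argument delivers both the convergence and the stated local bound with $K=K(\alpha)$.
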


Here one should note that
$\mathcal{L}(\R^d,\mathcal{L}(\R^d,\R^n))\cong
\mathcal{L}(\R^d\otimes \R^d,\R^n),$ so $\int_0^T Y d\BX \in \R^n,$
and $m=dn$ where $m$ is in the definition of controlled paths.

The Norris lemma was first established in \cite{N86}, and is viewed
as a quantitative version of Doob-Meyer's decomposition. A
deterministic quantitative Norris Lemma is given in \cite{CHLT15}.
It means that a rough integral can be distinguished from a rather
``smooth'' integral, essentially by the uniqueness of Gubinelli's
derivative, when the given rough path is ``truly rough''. Precisely,
one has the following definition and theorem.

\begin{defi}\quad
A path $X \in \oc^\alpha([0,T],\R^d)$ is said to be
$\theta$-H\"{o}lder rough for some given $\theta \in (0,1),$ on the
scale of $\varepsilon_0 > 0,$ if there exists a constant $L>0,$ such
that for any $a \in \R^d, $ $s \in [0,T],$ and $\varepsilon \in
(0,\varepsilon_0],$ there always exists $t \in [0,T],$ satisfying
$$
 |t-s|<\vep, \ and \    |a \cdot X_{s,t}| \geq L \varepsilon^\theta |a|.
$$
The largest value of such $L$ is called the modulus of $\theta
$-H\"{o}lder roughness of $X,$ denoted by $L_\theta(X).$ It is
obvious that the modulus $L_\theta(X)$ has the following expression:
\begin{equation}
L_\theta(X) = \inf_{|a|=1,s \in [0,T],\varepsilon \in
(0,\varepsilon_0]} \sup_{|t-s| \leq \varepsilon}
\frac{1}{\varepsilon^\theta}|a\cdot X_{s,t}|.
\end{equation}
\end{defi}

\begin{thm}({\bf\small\itshape{Norris lemma for rough paths}})\quad\label{NRP}
Suppose $\BX=(X,\X)\in \FC^\alpha([0,T],\R^d),$ with $X$ $\theta
$-H\"{o}lder rough for some $\theta < 2\alpha.$ Given $(Y,Y') \in
\FD_X^{2\alpha}([0,T],\mathcal {L}(\R^d,\R^n))$ and $Z \in
\oc^\alpha([0,T],\R^n),$ set
\begin{equation}
I_t=\int_0^t Y_s d\BX_s + \int_0^t Z_s ds,
\end{equation}
and
\begin{equation}
\mathcal {R}= 1+L_\theta(X)^{-1} + \|\BX\|_{\FC^\alpha} +
\|Y,Y'\|_{X,2\alpha} +|Y_0| +|Y'_0| +\|Z\|_\alpha +|Z_0|.
\end{equation}
Then one has the bound
\begin{equation}
\|Y\|_\infty  +\|Z\|_\infty \leq M \oor^q  \|I\|_\infty^r,
\end{equation}
for some constant $M,q \text{ and }r,$ only depending on $\alpha,
\theta, T.$

\end{thm}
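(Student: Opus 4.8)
The plan is to estimate, in order, $\|Y\|_\infty$, then $\|Y'\|_\infty$, then $\|Z\|_\infty$ --- the first two by exploiting the $\theta$-H\"older roughness of $X$, the last by solving a local expansion --- each step producing a strictly positive power of $\|I\|_\infty$, with the regime in which $\|I\|_\infty$ is not small handled separately. From the definitions, $\|Y\|_\infty\le|Y_0|+|Y'_0|\|X\|_\alpha T^\alpha+\|R^Y\|_{2\alpha}T^{2\alpha}\le C(T)\mathcal R^2$ and $\|Z\|_\infty\le|Z_0|+\|Z\|_\alpha T^\alpha\le C(T)\mathcal R$, so once $q\ge2$ and $M$ is large the inequality is automatic when $\|I\|_\infty\ge1$; thus assume $\|I\|_\infty\le1$. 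The basic tool is the local expansion obtained by combining the Gubinelli--Lyons rough-integral estimate with $|\int_s^t Z_r\,dr-Z_s(t-s)|\le\|Z\|_\alpha|t-s|^{1+\alpha}$ and $|t-s|^{1+\alpha}\le T^{1-2\alpha}|t-s|^{3\alpha}$ (legitimate since $\alpha<\frac12$): for $0\le s<t\le T$,
\[
I_{s,t}=Y_sX_{s,t}+Y'_s\X_{s,t}+Z_s(t-s)+\rho_{s,t},\qquad |\rho_{s,t}|\le C(T)\,\mathcal R^{3}|t-s|^{3\alpha},
\]
using that each of $\|X\|_\alpha$, $\|\X\|_{2\alpha}^{1/2}$, $\|R^Y\|_{2\alpha}$, $\|Y'\|_\alpha$ is $\le\mathcal R$.

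\emph{Step 1: $\|Y\|_\infty$.} Fix $s$ with $Y_s\ne0$; let $w\in\R^n$ be a unit vector with $|Y_s^{\top}w|=|Y_s|$ and put $a=Y_s^{\top}w/|Y_s^{\top}w|\in\R^d$, so that $|Y_sX_{s,t}|\ge|\langle w,Y_sX_{s,t}\rangle|=|Y_s|\,|a\cdot X_{s,t}|$ for every $t$. Applying the roughness of $X$ at $s$ in direction $a$ and scale $\varepsilon\in(0,\varepsilon_0\wedge1]$ supplies $t$ with $|t-s|<\varepsilon$ and $|a\cdot X_{s,t}|\ge L_\theta(X)\varepsilon^\theta$; substituting this $t$ into the expansion, bounding $|Y'_s|,|Z_s|,\|\X\|_{2\alpha}$ by powers of $\mathcal R$, and using $\varepsilon\le1$ with $2\alpha<1<3\alpha$ to absorb the $\varepsilon$ and $\varepsilon^{3\alpha}$ terms gives $L_\theta(X)|Y_s|\varepsilon^\theta\le 2\|I\|_\infty+C\mathcal R^{3}\varepsilon^{2\alpha}$. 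Since $L_\theta(X)^{-1}\le\mathcal R$, optimizing over $\varepsilon$ (take $\varepsilon^{2\alpha}\sim\|I\|_\infty\mathcal R^{-3}$ when this is an admissible scale; otherwise $\|I\|_\infty$ is large enough that $\|Y\|_\infty\le C(T)\mathcal R^2$ already dominates) yields
\[
\|Y\|_\infty\le M_1\mathcal R^{q_1}\|I\|_\infty^{r_1},\qquad r_1:=1-\frac{\theta}{2\alpha}>0.
\]

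\emph{Step 2: $\|Y'\|_\infty$.} Run the same device, now using that $Y'$ is the Gubinelli derivative of $Y$: from $Y'_sX_{s,t}=Y_{s,t}-R^Y_{s,t}$ with $|Y_{s,t}|\le2\|Y\|_\infty$ and $|R^Y_{s,t}|\le\mathcal R|t-s|^{2\alpha}$, the roughness of $X$ gives $L_\theta(X)|Y'_s|\varepsilon^\theta\le2\|Y\|_\infty+\mathcal R\varepsilon^{2\alpha}$; optimizing over $\varepsilon$ and inserting the Step~1 bound yields $\|Y'\|_\infty\le M_2\mathcal R^{q_2}\|I\|_\infty^{r_2}$ with $r_2=r_1^{2}>0$. \emph{Step 3: $\|Z\|_\infty$.} Solve the expansion for $Z_s$ and set $\delta=t-s$:
\[
|Z_s|\le 2\|I\|_\infty\delta^{-1}+\|Y\|_\infty\mathcal R\,\delta^{\alpha-1}+\|Y'\|_\infty\mathcal R^{2}\delta^{2\alpha-1}+C\mathcal R^{3}\delta^{3\alpha-1}.
\]
Only the last term lacks a positive power of $\|I\|_\infty$, and its exponent $3\alpha-1$ is positive since $\alpha>\frac13$; choosing $\delta=\|I\|_\infty^{a}$ with $a=a(\alpha,\theta)\in(0,1)$ small enough that $a(1-\alpha)<r_1$ and $a(1-2\alpha)<r_2$ (possible because $r_1,r_2>0$) turns every term into a positive power of $\|I\|_\infty$ times a power of $\mathcal R$; for $s$ within $\delta$ of $T$ one uses $[s-\delta,s]$ and corrects by $|Z_s-Z_{s-\delta}|\le\mathcal R\delta^{\alpha}$. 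Hence $\|Z\|_\infty\le M_3\mathcal R^{q_3}\|I\|_\infty^{r_3}$ with $r_3>0$, and taking $q=\max_i q_i$, $r=\min_i r_i$ and enlarging $M$ (using $\mathcal R\ge1$ and $\|I\|_\infty\le1$) completes the proof.

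The delicate point is Step~2. Trying to bound $\|Z\|_\infty$ directly from $\int_0^\cdot Z\,dr=I-\int_0^\cdot Y\,d\BX$ leaves a contribution of order $\|Y'\|_\infty\|\X\|_{2\alpha}$ that does not vanish with $\|I\|_\infty$ as long as $Y'$ is only controlled by $\mathcal R$; one has to realize that $\|Y'\|_\infty$ itself must be estimated, and --- crucially --- that the roughness of the first-level path $X$ alone, applied to the controlled-path identity $Y_{s,t}=Y'_sX_{s,t}+R^Y_{s,t}$, already suffices for this, so that no roughness of the iterated integral $\X$ is needed. The rest --- tracking the exponents through the three steps and handling the case distinctions (optimal scale versus the cutoff $\varepsilon_0\wedge1$, and subintervals meeting the endpoints of $[0,T]$) --- is routine but careful bookkeeping.
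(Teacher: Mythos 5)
Your argument is correct in substance, but note that the paper does not prove Theorem \ref{NRP} at all: it is quoted from the cited reference \cite{CHLT15} (see also Friz--Hairer \cite{FH14}), whose proof is exactly the three-step bootstrap you describe --- bound $\|Y\|_\infty$ by applying $\theta$-H\"older roughness to the local expansion $I_{s,t}=Y_sX_{s,t}+Y'_s\X_{s,t}+Z_s(t-s)+O(|t-s|^{3\alpha})$ and optimizing over the scale $\varepsilon$ (positive exponent since $\theta<2\alpha$), then bound $\|Y'\|_\infty$ by applying roughness to $Y_{s,t}=Y'_sX_{s,t}+R^Y_{s,t}$, then solve the expansion for $Z_s$ using $3\alpha-1>0$. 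So your proposal reconstructs essentially the same proof as the paper's source, with the key ideas (roughness of the first level sufficing for the $Y'$ bound, and the exponent bookkeeping) correctly identified.
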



\subsection{The $G$-expectation theory}

To introduce $G$-expectation theory, firstly we need to give a short
description of the sublinear expectation. Let $\Omega$ be a given
set and $\mathcal {H}$ be a linear space of real valued functions on
$\Omega$ containing constants. Furthermore, suppose
$\varphi(X_1,...,X_n) \in \mathcal{H}$ if $X_1,...,X_n \in
\mathcal{H} $ for $\varphi \in \oc_{b.Lip}(\R^n),$ the space of
bounded Lipschitz functions. The space $\Omega$ is the sample space
and $\mathcal {H}$ is the space of random variables.

\begin{defi}\quad
A sublinear expectation $\E$ is a functional $\E: \mathcal{H}
\rightarrow \R$ satisfying:
\begin{itemize}

\item $\E[c]=c,\qquad \forall\;c\in\,\mathbb{R};$
\item  $\E[X_1] \geq \E[X_2] \qquad if\; X_1\geq X_2;$
\item  $\E[\lambda X]=\lambda\E[X],\qquad \lambda\geq0 \quad X\in
\mathcal{H};$
\item $\E[X+Y]\leq \E[X]+\E[Y], \qquad    X,Y \in
\mathcal{H}.$

\end{itemize}

The triple $(\Omega, \mathcal{H}, \E)$ is called a sublinear
expectation space.
\end{defi}

%

\begin{defi}\quad
In a sublinear expectation space $(\Omega,\mathcal{H} ,\E)$, a
random vector $Y=(Y_{1},\cdot \cdot \cdot,Y_{n})$, $Y_{i}\in
\mathcal{H}$, $i=1...n,$ is said to be independent of another random
vector $X=(X_{1},\cdot \cdot \cdot,X_{m})$, $X_{i}\in \mathcal{H}$
under $\E[\cdot]$, if for every test function $\varphi \in
\oc_{b.Lip}(\mathbb{R}^{m}\times \mathbb{R}^{n}),$ we have
$\E[\varphi(X,Y)]=\E[ \E [\varphi(x,Y)]_{x=X}]$.
\end{defi}

\begin{rem}\quad
If $Y$ is independent of $X,$ one fails to get that $X$ is
independent of $Y$ automatically. Indeed, this is a main difference
between $G$-expectation theory and the classical case. There are
nontrivial examples explaining this point. See Chapter 1 in
\cite{P10}.

\end{rem}

\begin{defi}\quad
Let $X_{1}$ and $X_{2}$ be two $n$-dimensional random vectors
defined respectively in sublinear expectation spaces $(\Omega_{1}
,\mathcal{H}_{1},\E_{1})$ and $(\Omega_{2},\mathcal{H}
_{2},\E_{2})$. They are called identically distributed, denoted by
$X_{1}\overset{d}{=}X_{2}$, if $\E_{1}[\varphi(X_{1}
)]=\E_{2}[\varphi(X_{2})]$, for all $  \varphi \in \oc_{b.Lip}
(\mathbb{R}^{n})$.
\end{defi}

\begin{defi}{(\bf\small\itshape{$G$-normal distribution})}\quad
A $d$-dimensional random vector $X=(X_{1},\cdot \cdot \cdot,X_{d})$
in a sublinear expectation space $(\Omega,\mathcal{H},\E)$ is called
$G$-normally distributed if $\E[|X|^3] < \infty$ and for each
$a,b\geq0$
\[
aX+b\bar{X}\overset{d}{=}\sqrt{a^{2}+b^{2}}X,
\]
where $\bar{X}$ is an independent copy of $X$, i.e.,
$\bar{X}\overset{d}{=}X,$ $\bar{X}\text{ independent of } X,$ and
\[
G(A):=\frac{1}{2}\E[X'AX ]:\mathbb{S}%
_{d}\rightarrow \mathbb{R},
\]
Here $\mathbb{S}_{d}$ denotes the collection of $d\times d$
symmetric matrices.
\end{defi}

By Theorem 1.6 in Chapter 3 of \cite{P10}, we know that if
$X=(X_1,\cdots, X_d )$ is $G$-normally distributed,
$u(t,x):=\E[\varphi(x+\sqrt{t}X)],$ $(t,x) \in [0,\infty)\times
\R^d,$ is the unique viscosity solution of the following $G$-heat
equation:
\begin{equation}\label{Gheat}
\partial_t u-G(D_x^2u)=0, \ u(0,x)=\varphi(x),
\end{equation}
with function $G$ defined as above. \\

 Conversely, fixed any monotonic, sublinear function
$G(\cdot):\mathbb{S} _{d} \rightarrow \mathbb{R},$ one could
construct the sublinear expectation space $(\Omega,\mathcal {H},
\E).$\\

Now let $\Omega= \oc^0(\R^+,\R^d),$ the space of $\R^d$ valued
continuous paths $(\omega)_{t\geq0}$ vanishing at the origin. Denote
the coordinate process by $B_t$ and $u^{\varphi(\cdot)}(t,x)$ the
unique viscosity solution to the $G$-heat equation \eqref{Gheat}
with initial function $\varphi.$ Define $L_{ip}(\Omega_T):=\{
\varphi(B_{t_1\wedge T},...,B_{t_k\wedge T}): k\in \mathbb{N},
t_1,...t_k \in [0,\infty), \varphi\in \oc_{b.Lip}(\R^{k \times d})
\}$ for any $T>0$ and $L_{ip}(\Omega):= \bigcup_{n=1}^\infty
L_{ip}(\Omega_n) .$ We define a mapping $\E$ from $L_{ip}(\Omega)$
to $\R$ by recursively solving the $G$-heat equation:
\begin{equation}
\E[\varphi(B_{t_1}, B_{t_2}-B_{t_1},...,B_{t_n}-B_{t_{n-1}})]:=
\E[\varphi^{t_n-t_{n-1}}(B_{t_1},
B_{t_2}-B_{t_1},...,B_{t_{n-1}}-B_{t_{n-2}})],
\end{equation}
where
$\varphi^{t_n-t_{n-1}}(x_1,...,x_{n-1}):=u^{\varphi(x_1,...,x_{n-1},\cdot)}(t_n-t_{n-1},0).$
One can check that $\E[\cdot]$ is well defined and it is a sublinear
expectation on $L_{ip}(\Omega).$ Furthermore, one could define the
time consistent conditional expectation $\E[\cdot | \Omega_s]$ as
the mapping from $L_{ip}(\Omega)$ to $L_{ip}(\Omega_s)$ by
\begin{equation}
\E[\varphi(B_{t_1},
B_{t_2}-B_{t_1},...,B_{t_n}-B_{t_{n-1}})|\Omega_{s}]=\psi(B_{t_1},
B_{t_2}-B_{t_1},...,B_{s}-B_{t_{i-1}}), \ \ s\in [t_{i-1},t_i),
\end{equation}
with
$\psi(x_1,...x_i)=\E[\varphi(x_1,...,x_i+B_{t_i}-B_{s},...,B_{t_n}-B_{t_{n-1}})].$
Here is a collection of properties for this mapping.
\begin{itemize}
\item $\E[\xi|\Omega_t]= \xi,$ for any $\xi \in
L_{ip}(\Omega_t).$
\item $\E[X+Y|\Omega_t] \leq \E[X|\Omega_t]+ \E[Y|\Omega_t].$
\item $\E[\xi X| \Omega_t]= \xi^{+} \E[X|\Omega_t] + \xi^- \E[-X|
\Omega_t],$ for any $\xi \in L_{ip}(\Omega_t)$
\item $\E[\E[X|\Omega_t]|\Omega_s] = \E[X| \Omega_{t\wedge s}],$
in particular, $\E[\E[X|\Omega_t]]=\E[X].$
\item $\E[X|\Omega_t]=\E[X] ,$ if $X$ is independent of $L_{ip}(\Omega_t).$
\item $\E[X+\xi| \Omega_t] = \E[X| \Omega_t] + \xi,$ for any $\xi \in L_{ip}(\Omega_t), X \in L_{ip}(\Omega).$

\end{itemize}

From now on, we suppose the function $G$ non-degenerate, i.e., there
exists two constants $0<\underline{\sigma}^2 \leq \bar{\sigma}^2 <
\infty,$ such that
$$
\frac12 \underline{\sigma}^2 tr(A-B) \leq G(A)-G(B) \leq \frac12
\bar{\sigma}^2 tr(A-B).
$$
In the case that $\bar{\sigma}=\underline{\sigma},$ the function $G$
is linear, so $G$-framework is the
classical Wiener case.\\

For each $p \geq 1,$ $L_G^p(\Omega_T)$ denotes the completion of the
linear space $L_{ip}(\Omega_T),$ under norm $\| \cdot
\|_{L_G^p}:=\{\E[|\cdot|^p]\}^{\frac{1}{p}}.$ Obviously, for any
$p\leq q,$ $L_G^q \subseteq L_G^p.$ Furthermore, the conditional
expectation $\E[\cdot|\Omega_t]$ could be continuously extended to a
mapping from $L_G^1(\Omega)$ to $L_G^1(\Omega_t)$ and the extended
mapping adopts the above properties.\\

To give a description of elements in $L_G^p,$ Denis, Hu and Peng
gave the following representation of $\E[\cdot]$ by stochastic
control methods in \cite{DHP11}. Also see Hu and Peng \cite{HP09}
for an intrinsic and probabilistic method.

\begin{thm}\label{Grep}\quad
Assume $\Gamma $ is a bounded, convex and closed subset of
$\R^{d\times d},$ which represents function $G,$ i.e.,
$$
G(A)=\frac12 \sup_{\gamma \in \Gamma} tr(A\gamma \gamma'), for\ A
\in \mathbb{S}_d.
$$
Denote the Wiener measure by $P^0.$ Then, for any time sequence
$0=t_0 < t_1...<t_k ,$ the $G$-expectation has the following
representation

\begin{eqnarray*}
\E[\varphi(B_{t_0,t_1},...,B_{t_{k-1},t_k})] &=& \sup_{a \in
\mathcal {A}^{\Gamma} } E_{P^0}[\varphi(\int_0^{t_1}a_s
dB_s,...,\int_{t_{k-1}}^{t_{k}}a_s dB_s )]  \\
&=& \sup_{P^{a} \in \mathscr{P}^{\Gamma} }
E_{P^a}[\varphi(B_{t_0,t_1},...,B_{t_{k-1},t_k})] ,
\end{eqnarray*}

where $\mathcal {A}^{\Gamma}$ is the set of progressively measurable
processes with values in $\Gamma$ and $\mathscr{P}^\Gamma$ is the
set of laws of $\int_0^. a_s dB_s$ with $a \in \mathcal
{A}^{\Gamma}$ under Wiener measure. Furthermore,
$\mathscr{P}^{\Gamma}$ is tight.

\end{thm}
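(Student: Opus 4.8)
The plan is to show that $\xi \mapsto \sup_{P \in \mathscr{P}^\Gamma} E_P[\xi]$ obeys the same recursive construction via the $G$-heat equation \eqref{Gheat} that defines $\E$ on $L_{ip}(\Omega)$, and then to invoke uniqueness of viscosity solutions; the key input is a stochastic control (Hamilton--Jacobi--Bellman) representation of the one-step kernel. First I would reduce to the one-step case. Fix $\varphi \in \oc_{b.Lip}(\R^d)$ and set
$$
v(t,x) := \sup_{a \in \mathcal{A}^\Gamma} E_{P^0}\Big[\varphi\Big(x + \int_0^t a_s\, dB_s\Big)\Big].
$$
Using the flow property of the controlled stochastic integral together with a measurable selection argument, $v$ satisfies the dynamic programming principle, and its associated HJB equation is
$$
\partial_t v - \frac12 \sup_{\gamma \in \Gamma} tr\big(\gamma\gamma'\, D_x^2 v\big) = 0, \qquad v(0,\cdot) = \varphi.
$$
Since $G(A) = \frac12 \sup_{\gamma \in \Gamma} tr(A\gamma\gamma')$, this is exactly the $G$-heat equation \eqref{Gheat}, and $v$ is bounded with $v(t,\cdot)$ Lipschitz uniformly in $t$; hence, by the uniqueness statement recalled after \eqref{Gheat}, $v = u^{\varphi}$. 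Setting $x = 0$ and comparing with the definition of $\E$ gives the one-step identity $\E[\varphi(B_t)] = \sup_{a\in\mathcal{A}^\Gamma} E_{P^0}[\varphi(\int_0^t a_s\, dB_s)]$.

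Next I would induct on the number $k$ of increments, using the recursive definition of $\E$ and the time-consistency of the conditional $G$-expectation listed above. Conditioning on $\Omega_{t_{k-1}}$, one applies the one-step result to the last increment with the past frozen and the induction hypothesis to the first $k-1$ increments; the interchange of the outer supremum over controls on $[0,t_{k-1}]$ with the conditional expectation on $[t_{k-1},t_k]$ is legitimate because $\mathcal{A}^\Gamma$ is stable under concatenation at the deterministic times $t_i$ --- the same stability that underlies the dynamic programming principle. (One may instead follow the intrinsic probabilistic route of \cite{HP09}, constructing $\mathscr{P}^\Gamma$ directly and proving the identity by a priori estimates without PDE.)

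The second equality in the statement is immediate from the definitions: $P^a$ is the push-forward of $P^0$ under $\omega \mapsto \big(\int_0^\cdot a_s(\omega)\, dB_s(\omega)\big)$, so $E_{P^0}[\varphi(\int_0^{t_1} a\, dB, \dots, \int_{t_{k-1}}^{t_k} a\, dB)] = E_{P^a}[\varphi(B_{t_0,t_1}, \dots, B_{t_{k-1},t_k})]$, and one takes suprema over $a \in \mathcal{A}^\Gamma$. For tightness of $\mathscr{P}^\Gamma$: since $\Gamma$ is bounded, $|a_s| \le C_\Gamma$ for every $a \in \mathcal{A}^\Gamma$, so the Burkholder--Davis--Gundy inequality yields, for every $p \ge 1$,
$$
E_{P^0}\Big[\Big|\int_s^t a_r\, dB_r\Big|^{2p}\Big] \le K_p\, C_\Gamma^{2p}\, |t-s|^p
$$
uniformly in $a$; the Kolmogorov--Chentsov criterion then gives relative compactness of $\{P^a : a \in \mathcal{A}^\Gamma\}$ in the space of probability measures on $\oc^0(\R^+,\R^d)$, which is precisely the asserted tightness.

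The main obstacle is the first step: rigorously establishing the dynamic programming principle and the viscosity sub/super-solution properties of $v$. This needs a measurable selection theorem and a careful treatment of the regular conditional laws of the controlled diffusion; once the HJB characterization is available, the multi-step formula, the change of variables, and the tightness estimate are routine.
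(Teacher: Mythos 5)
The paper does not prove this statement at all: Theorem \ref{Grep} is recalled as a known result, quoted from Denis--Hu--Peng \cite{DHP11} (with the alternative probabilistic construction of Hu--Peng \cite{HP09} also cited), so there is no in-paper proof to compare against line by line. Your outline is essentially the stochastic-control route of \cite{DHP11}: identify the one-step value function $v(t,x)=\sup_{a\in\mathcal{A}^\Gamma}E_{P^0}[\varphi(x+\int_0^t a_s\,dB_s)]$ with the viscosity solution $u^\varphi$ of the $G$-heat equation \eqref{Gheat} via the HJB characterization and uniqueness, then propagate through the grid $t_0<\dots<t_k$ using the recursive definition of $\E$, independence of increments, and stability of $\mathcal{A}^\Gamma$ under pasting at deterministic times; the second equality is indeed just the definition of $\mathscr{P}^\Gamma$ as push-forwards, and tightness follows from the uniform (in $a$) BDG moment bounds on increments together with the Kolmogorov-type modulus-of-continuity criterion, exactly as in \cite{DHP11}. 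Two caveats about completeness rather than correctness: the step you flag as the ``main obstacle'' --- the dynamic programming principle and the sub/supersolution property of $v$, requiring measurable selection of near-optimal controls --- is genuinely the technical core, and in \cite{DHP11} it is handled by appealing to classical controlled-diffusion results (Krylov-type) rather than proved from scratch, so your proposal as written is a program whose hardest ingredient is imported, not established; and in the inductive step the interchange of the supremum over controls on $[0,t_{k-1}]$ with the conditional value of the last increment needs, beyond concatenation stability, a measurable ($\varepsilon$-optimal) selection of the control on $[t_{k-1},t_k]$ as a function of the past, which you mention only implicitly. With those points made precise, your argument reproduces the cited proof; it does not constitute a genuinely different route.
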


According to this theorem, one could extend $\E$ from $L_G^p$ to any
Borel measurable random variable by defining
$$\|\cdot\|_{\mathbb{L}^p}:=\sup_{P^{a} \in
\mathscr{P}^{\Gamma} } E_{P^a}^{\frac{1}{p}}[|\cdot|^p].$$ It is
simple to check that if $X \in L_G^p,$ then $\| X
\|_{\mathbb{L}^p}=\| X \|_{L_G^p}.$\\

Next, we introduce the capacity corresponding to the $G$-expectation
and give the description of $L_G^p.$ Define
$$
\hc(A):= \sup_{P \in \mathscr{P}^{\Gamma} }P(A), for \ A \in
\mathscr{B}(\Omega_T).
$$

\begin{defi}\quad
A property is said to hold ``quasi-surely''(q.s.) with respect to
$\hc$, if it holds true outside a $\hc$-polar set (Borel set with
capacity 0), and is denoted by $\hc-q.s..$\\

\end{defi}

\begin{defi}\quad
A process $Y$ on $[0,T]$ is said to be a quasi-surely modification
of another process $X,$ if for any $t \in [0,T]$
$$
Y_t=X_t, \ \ \ \hc-q.s..
$$

\end{defi}

If a property stands true $ \hc-q.s.,$ then for any $P \in
\mathscr{P}^{\Gamma},$ it holds true $P-a.s..$ By the definition of
$L_G^p,$ we do not distinguish two random variables if they are
equal outside a polar set.

\begin{defi}\quad
Equip the space $\Omega_T$ with the uniform topology. A mapping $X$
on $\Omega_T$ with values in $\R $ is said to be quasi-continuous if
for any $\varepsilon >0,$ there exists an open set $O,$ with
$\hc(O)< \vep$ such that $X$ is continuous in $O^c.$
\end{defi}

\begin{defi}
One says that $X:\Omega_T \rightarrow \R$ has a quasi-continuous
version if there exists a quasi-continuous function $Y,$ such that
$X=Y,  \ \hc-q.s..$

\end{defi}

\begin{thm}\quad
One has the following representation for $L_G^1,$
$$
L_G^1(\Omega_T)=\{ X \in \mathscr{B}(\Omega_T) : X \text{ has a
quasi-continuous version, } \lim_{n\rightarrow \infty} \|
|X|1_{\{|X|>n\} } \|_{\mathbb{L}^1}=0 \}.
$$
\end{thm}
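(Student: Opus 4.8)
The plan is to prove the two inclusions separately, writing $\mathbb{H}$ for the set on the right-hand side. First observe that every $\xi \in L_{ip}(\Omega_T)$ is a bounded continuous function of finitely many coordinates, hence continuous on $\Omega_T$ for the uniform topology and in particular quasi-continuous, and being bounded it satisfies $|\xi|1_{\{|\xi|>n\}}=0$ for $n$ large; thus $L_{ip}(\Omega_T) \subseteq \mathbb{H}$. Since $L_G^1(\Omega_T)$ is by definition the $\|\cdot\|_{L_G^1}=\|\cdot\|_{\mathbb{L}^1}$-closure of $L_{ip}(\Omega_T)$, the inclusion $L_G^1(\Omega_T)\subseteq \mathbb{H}$ will follow once I check that $\mathbb{H}$ is closed in the space of Borel random variables with finite $\|\cdot\|_{\mathbb{L}^1}$-norm.

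For this closedness I would exploit that $\hc$ is a monotone, subadditive (Choquet-type) capacity satisfying the Chebyshev-type bound $\hc(\{|Y|>\lambda\}) = \sup_{P \in \mathscr{P}^\Gamma}P(|Y|>\lambda) \leq \lambda^{-1}\||Y|\|_{\mathbb{L}^1}$. Given $X_n \in \mathbb{H}$ with $\|X_n-X\|_{\mathbb{L}^1}\to 0$, extract a subsequence with $\sum_k \|X_{n_{k+1}}-X_{n_k}\|_{\mathbb{L}^1} < \infty$; a Borel--Cantelli argument for $\hc$ then shows that $X_{n_k}\to X$ quasi-surely, and in fact uniformly outside an open set of arbitrarily small capacity, so that a quasi-continuous version of $X$ is produced by patching together those of the $X_{n_k}$. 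The tail condition $\lim_n\||X|1_{\{|X|>n\}}\|_{\mathbb{L}^1}=0$ passes to the limit by dominating $|X|1_{\{|X|>n\}}$ in terms of $|X_k|1_{\{|X_k|>m\}}$, $|X-X_k|$ and $|X-X_k|1_{\{|X_k|\le m\}}$ and choosing first $k$ large, then $m$ large, then $n$ large. Together this gives $X\in\mathbb{H}$ and hence $L_G^1(\Omega_T)\subseteq\mathbb{H}$.

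For the reverse inclusion take $X\in\mathbb{H}$ and approximate in three steps. Truncating, $X_N:=((-N)\vee X)\wedge N$ satisfies $\|X-X_N\|_{\mathbb{L}^1}\to 0$ by the tail condition, so it suffices to treat bounded quasi-continuous $X$. For such $X$, given $\vep$ pick an open set $O$ with $\hc(O)<\vep$ such that $X$ restricted to $O^c$ is continuous; by the Tietze extension theorem on the metric space $\Omega_T$, extend $X|_{O^c}$ to $\tilde X \in C_b(\Omega_T)$ with $\|\tilde X\|_\infty \le \|X\|_\infty$, so that $|X-\tilde X|\le 2\|X\|_\infty 1_O$ and $\|X-\tilde X\|_{\mathbb{L}^1}\le 2\|X\|_\infty\,\hc(O) < 2\|X\|_\infty\vep$. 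Finally, to approximate an arbitrary $\tilde X\in C_b(\Omega_T)$ by elements of $L_{ip}(\Omega_T)$ in $\|\cdot\|_{\mathbb{L}^1}$, use that $\mathscr{P}^\Gamma$ is tight (Theorem \ref{Grep}) to restrict attention to a compact $K\subseteq\Omega_T$ with $\sup_{P\in\mathscr{P}^\Gamma}P(K^c)<\vep$; on $K$ the dyadic piecewise-linear interpolations $\omega\mapsto(\omega_{t_1},\dots,\omega_{t_k})$ approximate the identity uniformly, so $\tilde X$ is uniformly approximated on $K$ by a continuous cylinder function, which in turn is approximated by a Lipschitz cylinder function, while on $K^c$ the error is controlled by $2\|\tilde X\|_\infty\vep$. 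This yields an element of $L_{ip}(\Omega_T)$ within $O(\vep)$ of $X$ in $\|\cdot\|_{\mathbb{L}^1}$, hence $X\in L_G^1(\Omega_T)$.

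I expect the main obstacle to be the first inclusion: making the capacity Borel--Cantelli argument fully rigorous so that quasi-continuity genuinely survives $\mathbb{L}^1$-completion, taking care that the exceptional sets can be chosen open with summable capacities and that the ``uniform convergence off a small open set'' structure is preserved under the patching. The tightness-based cylinder approximation in the converse direction is more routine, but still relies on the standard fact that piecewise-linear interpolation converges uniformly on compact subsets of $\oc^0(\R^+,\R^d)$.
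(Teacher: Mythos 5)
The paper itself does not prove this theorem — it is recalled as a known result from Denis--Hu--Peng \cite{DHP11} — and your argument is essentially the standard proof from that reference: identify $L_G^1(\Omega_T)$ with the $\|\cdot\|_{\mathbb{L}^1}$-closure of $L_{ip}(\Omega_T)$, show the right-hand side is closed under $\mathbb{L}^1$-limits via the capacity Chebyshev/Borel--Cantelli argument with open exceptional sets, and for the converse truncate, use Tietze to reduce to $C_b(\Omega_T)$, then use tightness of $\mathscr{P}^\Gamma$ together with piecewise-linear cylinder and Lipschitz approximation. The sketch is correct, and the only points requiring routine care are the ones you already flag (arranging the exceptional sets to be open when patching quasi-continuous versions, and using compactness of $K$ together with its interpolants to get the needed uniform continuity of $\tilde X$).
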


\begin{prop}\label{gconverge}\quad
Assume that $(X_n)_{n\geq 1}$ is a sequence of random variables, and
converges to $X$ in the sense of $\|\cdot \|_{\mathbb{L}^p}$. Then
the convergence holds in the sense of capacity, i.e., for any $\vep
>0,$
$$
\hc(|X_n-X| > \vep)\stackrel{n}{\rightarrow} 0.
$$
Furthermore, there exists a subsequence $(X_{n_k})_{k \geq 1}$
converging to $X$ quasi-surely.
\end{prop}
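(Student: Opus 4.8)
The plan is to reproduce, in the $G$-framework, the classical chain of implications ``$L^p$-convergence $\Rightarrow$ convergence in probability $\Rightarrow$ a.s.\ convergence along a subsequence'', with the capacity $\hc$ playing the role of a probability measure. The two structural facts I would lean on are the representations $\hc(A)=\sup_{P\in\mathscr{P}^\Gamma}P(A)$ and $\|\cdot\|_{\mathbb{L}^p}=\sup_{P\in\mathscr{P}^\Gamma}E_P^{1/p}[|\cdot|^p]$ coming from Theorem \ref{Grep}, together with the countable subadditivity of $\hc$, which is immediate from its definition as a supremum of (countably additive, hence countably subadditive) probability measures. All events appearing below are Borel subsets of $\Omega_T$, since the $X_n$ and $X$ are random variables, so $\hc$ is defined on them.

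First I would prove a Chebyshev-type bound at the level of the capacity. For a fixed $\vep>0$ and any $P\in\mathscr{P}^\Gamma$, Markov's inequality under $P$ gives $P(|X_n-X|>\vep)\le \vep^{-p}E_P[|X_n-X|^p]\le \vep^{-p}\|X_n-X\|_{\mathbb{L}^p}^p$; taking the supremum over $P\in\mathscr{P}^\Gamma$ yields $\hc(|X_n-X|>\vep)\le \vep^{-p}\|X_n-X\|_{\mathbb{L}^p}^p$, which tends to $0$ as $n\to\infty$ by hypothesis. This already establishes convergence in capacity.

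For the subsequence, I would use the convergence in capacity just obtained with $\vep=2^{-k}$ to choose, inductively, a strictly increasing sequence $(n_k)$ with $\hc(|X_{n_k}-X|>2^{-k})\le 2^{-k}$. Setting $A_k:=\{|X_{n_k}-X|>2^{-k}\}$ and $A:=\limsup_k A_k=\bigcap_{m}\bigcup_{k\ge m}A_k$, countable subadditivity of $\hc$ gives, for every $m$, $\hc(A)\le \hc\bigl(\bigcup_{k\ge m}A_k\bigr)\le\sum_{k\ge m}\hc(A_k)\le\sum_{k\ge m}2^{-k}=2^{1-m}$, hence $\hc(A)=0$, i.e.\ $A$ is a $\hc$-polar set. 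For any $\omega\notin A$ there is some $m(\omega)$ with $|X_{n_k}(\omega)-X(\omega)|\le 2^{-k}$ for all $k\ge m(\omega)$, so $X_{n_k}(\omega)\to X(\omega)$; since $A$ is polar this means exactly that $X_{n_k}\to X$ $\hc$-q.s., completing the proof.

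I do not expect a genuine obstacle here: the argument is the standard Borel--Cantelli extraction, and the only two points needing a line of justification are the passage from the $P$-wise Markov inequality to the capacity estimate via the supremum, and the countable subadditivity of $\hc$ — both direct consequences of the representation $\hc(\cdot)=\sup_{P\in\mathscr{P}^\Gamma}P(\cdot)$. The only mild care is to make sure the events $\{|X_n-X|>\vep\}$ are Borel (so that $\hc$ applies), which holds since the $X_n,X$ are random variables.
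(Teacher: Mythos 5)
Your proof is correct, and it is exactly the standard argument: the Chebyshev bound $\hc(|X_n-X|>\vep)\le\vep^{-p}\|X_n-X\|_{\mathbb{L}^p}^p$ obtained by taking the supremum over $P\in\mathscr{P}^{\Gamma}$, followed by the Borel--Cantelli extraction using the countable subadditivity of $\hc$ (which, as you note, follows from $\hc=\sup_{P}P$). The paper itself gives no proof of this proposition --- it is listed among the preliminaries with proofs referred to \cite{DHP11} and \cite{P10} --- and your argument coincides with the one found there, so there is nothing missing or different to flag.
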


\begin{rem}\quad
It is vital to point out that though the above proposition holds
true in the $G$-framework, even sup linear expectation framework,
the dominated convergence theorem (the quasi-surely version), and
the claim that quasi-surely convergence implies convergence in
capacity,
all fail in $G$-framework. \\

\end{rem}

Now we introduce the stochastic integral (It\^{o}'s integral)
for one-dimension case in $G$-framework.\\

Denote $M_G^{p,0}(0,T)$ the collection of processes with form
$$
\eta_t(\omega)=\sum_{i=0}^{N-1} \xi_i(\omega)1_{[t_i,t_{i+1})}(t),
$$
for a partition $\{0=t_0 <...<t_N=T\}$ and $\xi_i \in
L_{ip}(\Omega_{t_i}),i=0...N-1.$ Then denote by $M_G^{p}(0,T)$ the
completion of $M_G^{p,0}(0,T)$ under norm
$\|\cdot\|_{M_G^p}:=\{\E\int_0^T |\eta_s|^p ds\}^{\frac{1}{p}}.$

\begin{defi}\quad
For each $\eta \in M_G^{2,0}(0,T)$, one has the mapping $I$ from
$M_G^{2,0}(0,T)$ to $L_G^2(\Omega_T):$
\begin{equation}
I(\eta)=\int_0^T \eta_s dB_s := \sum_{i=0}^{N-1}
\xi_i(B_{t_{i+1}}-B_{t_i}).
\end{equation}
\end{defi}

It has been shown (see \cite{P07a},\cite{P08a},\cite{P10}) that the
mapping is continuous and can be extended to the completion space
$M_G^2(0,T).$  Define the quadratic variation processes $\langle B
\rangle $ of $G$-Brownian motion by
\begin{equation}
\langle B \rangle_t := B_t^2 - 2 \int_0^t B_s dB_s.
\end{equation}
It can be shown that $\underline{\sigma}^2 \leq \frac{d\langle B
\rangle_t}{dt} \leq \bar{\sigma}^2, \ \hc-q.s.,$ where
$\underline{\sigma}=\sqrt{-\E[-B_1^2]}$ and
$\bar{\sigma}=\sqrt{\E[B_1^2]}.$ In $G$-expectation theory, $\langle
B \rangle$ shares properties of independent stationary increment
just as $G$-Brownian motion. Moreover, the following integral of a
process in $M_G^{1,0}(0,T)$ can be continuously extended to the
completion $M_G^1(0,T).$
\begin{equation}
\int_0^T \eta_t d\langle B \rangle_t := \sum_{i=0}^{N-1}
\xi_i(\langle B \rangle_{t_{i+1}}-\langle B\rangle_{t_i}):
M_G^{1,0}(0,T) \rightarrow L_G^1(\Omega_T),
\end{equation}
where $\eta$ is defined as above, only $L_G^2$ replaced by $L_G^1.$\\

For the multi-dimensional case, one could obtain similar results.
Indeed, let $(B_t)_{t\geq 0}$ be a d-dimensional $G$-Brownian
motion. For any $a \in \R^d, $ $B^a:= a \cdot B$ is still a
$G_a$-Brownian motion. Then according to results in one-dimensional
case, one could define integrals with respect to $B^a, \langle B^a
\rangle,$ and obtain continuity for these mappings. Furthermore, the
mutual variation process $\langle B^a, B^{\bar{a}} \rangle_t$ could
be
defined by polarization.\\

%

At last, we end this subsection with It\^{o}'s formula in
$G$-framework. The proof could also be obtained in \cite{P10}.

\begin{thm}\label{GIF}\quad
Let $\Phi$ be a twice continuously differentiable function on $\R^n$
with polynomial growth for the first and second order derivatives.
Suppose $X$ is a It\^{o} process, i.e.
$$
X_t^\nu=X_0^\nu+\int_0^t \alpha_s^\nu ds +\int_0^t \eta_s^{\nu ij}
d\langle B^i,B^j \rangle_+ \int_0^t \beta _s^{\nu j }dB_s^j
$$
where $\nu=1,...,n,\ i,j=1,...,d,$ $\alpha_s^\nu,\eta_s^{\nu
ij},\beta _s^{\nu j }$ are bounded processes in $M_G^2(0,T).$ Here
and from now on, repeated indices means summation over the same
ones. Then for each $t \geq s \geq 0$ we have in $L_G^2(\Omega_t):$

\begin{eqnarray*}
\Phi(X_t)-\Phi(X_s)&=&\int_s^t \partial_{x^{\nu}} \Phi(X_u) \beta
_u^{\nu j } dB_u^j + \int_s^t \partial_{x^{\nu}} \Phi(X_u)
\alpha_u^{\nu} du\\
                   &+&\int_s^t[\partial_{x^{\nu}}\Phi(X_u)\eta_u^{\nu
ij}+\frac12 \partial^2_{x^{\mu} x^{\nu}}\Phi(X_u) \beta _u^{\mu i
}\beta _u^{\nu j }]d \langle B^i, B^j \rangle_u.
\end{eqnarray*}
\end{thm}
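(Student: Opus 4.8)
The plan is to establish It\^o's formula in the $G$-framework by the standard route: first verify it for polynomials (or at least for $\Phi \in \oc_b^2$ with bounded derivatives) and simple It\^o processes, then pass to the general case by density arguments in $M_G^2(0,T)$ and $L_G^2(\Omega_t)$. More precisely, I would begin by reducing to the case where the coefficients $\alpha^\nu, \eta^{\nu ij}, \beta^{\nu j}$ are step processes in $M_G^{2,0}(0,T)$; since these are dense in $M_G^2(0,T)$ and the integrals $\int \cdot\, ds$, $\int \cdot\, d\langle B^i, B^j\rangle$, $\int \cdot\, dB^j$ are all continuous from the appropriate $M_G$-spaces into $L_G^2$ (or $L_G^1$), the It\^o process $X$ itself then varies continuously in $L_G^2(\Omega_t)$ under such approximation. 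For step coefficients the process $X$ is, on each subinterval of the partition, of the form constant-plus-$(\text{linear in }s)$-plus-$(\text{linear in }\langle B\rangle)$-plus-$(\text{linear in }B)$, so it suffices to prove the formula on one such subinterval and then telescope.

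On a single subinterval I would take a further partition $s = u_0 < u_1 < \cdots < u_m = t$ and write the telescoping sum $\Phi(X_t) - \Phi(X_s) = \sum_k [\Phi(X_{u_{k+1}}) - \Phi(X_{u_k})]$, then Taylor-expand each increment to second order with a remainder:
\begin{equation*}
\Phi(X_{u_{k+1}}) - \Phi(X_{u_k}) = \partial_{x^\nu}\Phi(X_{u_k})\, X^\nu_{u_k,u_{k+1}} + \tfrac12 \partial^2_{x^\mu x^\nu}\Phi(X_{u_k})\, X^\mu_{u_k,u_{k+1}} X^\nu_{u_k,u_{k+1}} + r_k .
\end{equation*}
The first-order term, after substituting $X^\nu_{u_k,u_{k+1}} = \int \alpha^\nu + \int \eta^{\nu ij}\,d\langle B^i,B^j\rangle + \int \beta^{\nu j}\,dB^j$, converges in $L_G^2$ to $\int_s^t \partial_{x^\nu}\Phi(X_u)\,\alpha^\nu_u\,du + \int_s^t \partial_{x^\nu}\Phi(X_u)\,\eta^{\nu ij}_u\,d\langle B^i,B^j\rangle_u + \int_s^t \partial_{x^\nu}\Phi(X_u)\,\beta^{\nu j}_u\,dB^j_u$; the third integral requires checking that $\partial_{x^\nu}\Phi(X_u)\beta^{\nu j}_u$ lies in $M_G^2$, which is where the polynomial-growth hypothesis and boundedness of the coefficients are used. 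For the second-order term, one expands the product $X^\mu_{u_k,u_{k+1}} X^\nu_{u_k,u_{k+1}}$ into nine pieces; the only one that survives in the limit is $\int \beta^{\mu i}\,dB^i \cdot \int \beta^{\nu j}\,dB^j$, which contributes $\int_s^t \partial^2_{x^\mu x^\nu}\Phi(X_u)\,\beta^{\mu i}_u\beta^{\nu j}_u\,d\langle B^i,B^j\rangle_u$ after recognizing that $\sum_k (B^i_{u_k,u_{k+1}})(B^j_{u_k,u_{k+1}}) \to \langle B^i,B^j\rangle_t - \langle B^i,B^j\rangle_s$ in $L_G^2$ (the defining property of the quadratic variation, obtained from the one-dimensional construction and polarization).

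The main obstacle will be controlling all the cross-terms and the Taylor remainder $\sum_k r_k$ in the $L_G^2$ (equivalently $\mathbb{L}^2$, i.e.\ $\sup_{P \in \mathscr{P}^\Gamma} E_P[|\cdot|^2]$) norm as the mesh of the partition goes to zero. Each such estimate must be made uniform over the family $\mathscr{P}^\Gamma$ of probability measures, since convergence in $L_G^2$ is not a single-measure statement; fortunately, because $\mathscr{P}^\Gamma$ consists of laws of $\int a\,dB$ with $a$ valued in the bounded set $\Gamma$, one has uniform moment bounds like $\sup_P E_P[|B_{u,v}|^{2k}] \leq C_k |v-u|^k$, and uniform estimates $\underline\sigma^2 \leq d\langle B\rangle/dt \leq \bar\sigma^2$, which let the usual classical $L^2$-type arguments (Cauchy--Schwarz, the continuity estimates for the three integrals, and the modulus-of-continuity control on $\partial^2\Phi(X)$ coming from continuity of $X$) go through uniformly. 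Concretely, the terms involving $\int\alpha\,ds$ or $\int\eta\,d\langle B\rangle$ paired with anything else are $O(|u_{k+1}-u_k|)$ times a bounded factor, so they sum to something $O(\text{mesh})$; the remainder $r_k$ is $o(|X_{u_k,u_{k+1}}|^2)$ with a uniformly integrable second moment, hence also vanishes. Once the formula holds for $\Phi \in \oc_b^2$ and step coefficients, I would extend to general bounded coefficients in $M_G^2$ by the continuity/density argument mentioned above, and finally remove the boundedness restriction on $\Phi$'s derivatives by a localization or truncation argument that exploits the polynomial-growth assumption together with the uniform moment bounds on $X$.
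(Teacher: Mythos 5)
The paper gives no proof of this theorem at all---it simply points to Peng's book \cite{P10}---and your outline is essentially the standard argument found there: Taylor expansion along a partition for step coefficients, identification of the surviving second-order term through the mutual variation $\sum_k B^i_{u_k,u_{k+1}}B^j_{u_k,u_{k+1}}\to\langle B^i,B^j\rangle$, then density of $M_G^{2,0}$ in $M_G^2$ together with continuity of the three integral maps, and finally an approximation of $\Phi$. One caution on the last steps: in the $G$-framework dominated convergence and stopping-time localization are not available (the paper itself remarks that dominated convergence fails), so both the Taylor remainder and the passage from bounded-derivative $\Phi$ to polynomial growth must be handled by explicit moment and truncation estimates (e.g., first taking $\partial^2\Phi$ Lipschitz so the remainder is bounded by $C|X_{u_k,u_{k+1}}|^3$, then approximating $\Phi$ by functions agreeing with it on large balls), which is exactly how the cited proof proceeds---your ``truncation with uniform moment bounds'' option is the right one, whereas a stopping-time localization would not go through.
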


\section{$G$-Stochastic Integral as Rough Integral}

Firstly we give the $G$-expectation version of Kolmogorov criterion
for rough paths, the proof of which is adapted from the classical
case (see Theorem 3.1 in \cite{FH14}).

\begin{thm}\label{Kol}\quad
For fixed $q\geq 2, \beta
> \frac{1}{q},$ assume $X(\omega): [0,T]\rightarrow \R^d$ and $\X(\omega): [0,T]^2
\rightarrow \R^d\otimes \R^d$ are processes with $X_t \in
L_G^q(\Omega_T), \X_{s,t} \in L_G^{\frac{q}{2}}(\Omega_T),\forall
s,t \in [0,T],$ and satisfy relation $(\ref{chen}),$ $\hc-q.s.$. If
for any $s,t \in [0,T],$ one has bounds
\begin{equation}
\|X_{s,t}\|_{L_G^q} \leq C|t-s|^{\beta}, \;
\|\X_{s,t}\|_{L_G^{\frac{q}{2}}} \leq C |t-s|^{2\beta},
\end{equation}
for some constant $C$. Then for all $\alpha \in [0, \beta-
\frac{1}{q}),$ $(X,\X)$ has a $\hc-q.s.$ continuous modification,
and there exist $K_{\alpha}\in L_G^q, \K_{\alpha} \in
L_G^{\frac{q}{2}} $ such that for any $s,t \in [0,T],$ one has
inequalities
\begin{equation}
|X_{s,t}| \leq K_{\alpha} |t-s|^{\alpha}, \  |\X_{s,t}| \leq
\K_{\alpha} |t-s|^{2\alpha},  \ \  \  \hc-q.s..
\end{equation}
In particular, if $\beta - \frac{1}{q} > \frac13,$ then $\hc-q.s.$
$\BX=(X,\X)$ belongs to $\FC^{\alpha}([0,T],\R^d),$ for any $\alpha
\in (\frac13,\beta - \frac{1}{q}).$
\end{thm}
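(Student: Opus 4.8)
The plan is to follow the classical Kolmogorov continuity argument for rough paths (Theorem 3.1 in \cite{FH14}) but executed entirely through the sublinear expectation $\E$ and the capacity $\hc$. The key structural input is that the moment bounds on $X_{s,t}$ and $\X_{s,t}$ are stated in terms of $\|\cdot\|_{L_G^q}$ and $\|\cdot\|_{L_G^{q/2}}$, and that $\|\cdot\|_{L_G^p}$ coincides with $\sup_{P^a\in\mathscr{P}^\Gamma}E_{P^a}^{1/p}[|\cdot|^p]$, so that Markov's inequality relative to each $P^a$ — and hence relative to $\hc$ after taking the sup over $P^a\in\mathscr{P}^\Gamma$ — is available. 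First I would fix a sequence of dyadic partitions $\mathcal{D}_n$ of $[0,T]$ and, exactly as in the classical dyadic chaining argument, estimate the increments $|X_{t_{i+1}}-X_{t_i}|$ and $|\X_{t_i,t_{i+1}}|$ along dyadic points of level $n$; summing the given moment bounds over the $2^n$ such intervals and applying Borel–Cantelli with respect to $\hc$ (using that $\hc$ is countably subadditive on the relevant class and that $\sum_n \hc(\cdot)<\infty$ when the exponents are chosen so $\alpha<\beta-1/q$) yields a $\hc$-polar exceptional set off of which one controls all dyadic increments. The Garsia–Rodemich–Rumsey / dyadic telescoping step then upgrades these dyadic bounds to the Hölder bounds $|X_{s,t}|\leq K_\alpha|t-s|^\alpha$ for arbitrary $s,t$, with $K_\alpha$ expressed as a $\hc$-a.s. finite random constant; the same scheme, but using Chen's identity $\X_{s,t}-\X_{s,u}-\X_{u,t}=X_{s,u}\otimes X_{u,t}$ to handle the non-additivity of $\X$, gives the second-level bound $|\X_{s,t}|\leq\K_\alpha|t-s|^{2\alpha}$.

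The second thing I would check carefully is the integrability claim: not only must $K_\alpha$ and $\K_\alpha$ be finite $\hc$-q.s., but one needs $K_\alpha\in L_G^q$ and $\K_\alpha\in L_G^{q/2}$. For this I would write $K_\alpha$ as (a constant times) the $\ell^\infty$- or $\ell^q$-type sum over dyadic levels of the normalized increments, take $\E[|K_\alpha|^q]$, and push the expectation inside using subadditivity of $\E$ together with $\E[\lambda^q]=\lambda^q\E[|\cdot|^q]$-type scaling; the geometric decay coming from $\alpha<\beta-1/q$ makes the resulting series of $L_G^q$-norms summable, so $K_\alpha\in L_G^q$, and analogously $\K_\alpha\in L_G^{q/2}$. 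One also has to verify that $(X,\X)$ admits a $\hc$-q.s. continuous modification: the dyadic Hölder estimates already force uniform continuity on the dyadic rationals off the polar set, so extending by continuity produces the modification, and Chen's identity, holding $\hc$-q.s. on a countable dense set of triples, passes to all triples by this continuity.

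The final assertion — that when $\beta-1/q>1/3$ the pair $\BX=(X,\X)$ lies in $\FC^\alpha([0,T],\R^d)$ for every $\alpha\in(1/3,\beta-1/q)$ — is then essentially a restatement: picking such an $\alpha$, the bounds just proved give $\|X\|_\alpha\leq K_\alpha<\infty$ and $\|\X\|_{2\alpha}\leq\K_\alpha<\infty$ $\hc$-q.s., and Chen's identity holds, which is exactly membership in $\mathscr{C}^\alpha$ by the definition given, with $\|\BX\|_{\FC^\alpha}=\|X\|_\alpha+\|\X\|_{2\alpha}^{1/2}$ finite $\hc$-q.s.

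The step I expect to be the main obstacle is the Borel–Cantelli / Markov argument at the level of the capacity $\hc$: in the $G$-framework $\hc$ is only sub-additive (not a measure), the dominated convergence theorem fails quasi-surely, and one cannot freely interchange $\hc$ with countable limits. The resolution is to stay with the representation $\hc(A)=\sup_{P\in\mathscr{P}^\Gamma}P(A)$ and $\|\cdot\|_{\mathbb{L}^p}=\sup_{P^a}E_{P^a}^{1/p}[|\cdot|^p]$: the Markov estimate holds uniformly in $P\in\mathscr{P}^\Gamma$ with the same constant, so $\hc$ of each bad dyadic event is bounded by a summable sequence, and countable subadditivity of $\hc$ (valid for this capacity) suffices to conclude the exceptional set is $\hc$-polar without invoking any measure-theoretic limit theorem. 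Once this is in place, all the remaining estimates are the classical ones carried out $\hc$-q.s. and in the $\E$-norms, and go through routinely.
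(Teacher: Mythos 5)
Your proposal is correct and follows essentially the same route as the paper: dyadic chaining of the increments at each level, geometric summability from $\alpha<\beta-\frac1q$, Chen's identity for the second-level term, and $L_G^q$/$L_G^{q/2}$ norm estimates carried out under $\E$. The only cosmetic difference is your Markov/Borel--Cantelli detour at the level of $\hc$: the paper skips it by defining $K_\alpha$ (and $\K_\alpha$) directly as a series $2\sum_n 2^{n\alpha}K_n$ whose $L_G^q$-norms are summable, so membership in $L_G^q$ and quasi-sure finiteness come simultaneously from completeness of $L_G^q$ and Proposition \ref{gconverge}.
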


\begin{proof}
Without loss of generality suppose T=1, and define dyadic partition
as $D_n=\{i 2^{-n}, i=0... 2^n\}.$ Set
$$
K_n= \sup_{t \in D_n}|X_{t, t+2^{-n}}|, \; \K_n = \sup_{t \in D_n}
|\X_{t,t+2^{-n}}|.
$$
Note that since $D_n$ are finite sets, $K_n, \K_n $ belong to $
L_G^q$ and $L_G^{\frac{q}{2}} $ respectively. Furthermore, one has
bounds
\begin{eqnarray*}
\E(K_n^q) \leq \sum_{D_n} \E|X_{t,t+2^{-n}}|^q \leq C^q
(\frac{1}{2^n})^{\beta q -1}\\
\E(\K_n^{\frac{q}{2}}) \leq \sum_{D_n} \E|\X_{t,
t+2^{-n}}|^{\frac{q}{2}} \leq C^{\frac{q}{2}} (\frac{1}{2^n})^{\beta
q -1}
\end{eqnarray*}
For any $s, t \in \bigcup_n D_n,$ there exists $m$ such that
$2^{-m-1} < t-s \leq 2^{-m},$ and a partition, $s=\tau_0< \tau_1
<...<\tau_N=t,$ with $(\tau_i, \tau_{i+1}) \in D_k,$ for some $k\geq
m+1.$ Also, we can choose such a partition that at most two
intervals in this partition are taken from the same $D_k$ for any
fixed $k \geq
m+1.$\\
Then one obtains
$$
|X_{s,t}| \leq \max_{0 \leq i \leq N}|X_{s,\tau_i}| \leq \sum_{i=0}
^{N-1} |X_{\tau_i, \tau_{i+1}}| \leq 2 \sum_{n \geq m+1} K_n.
$$
It follows that
$$
\frac{|X_{s,t}|}{|t-s|^{\alpha}} \leq 2\sum_{n \geq
m+1}(2^n)^{\alpha}K_n \leq K_{\alpha},
$$
where $K_{\alpha}:= 2\sum_{n \geq 0}2^{n\alpha} K_n.$ We can easily
check that $K_{\alpha} \in L_G^q$ since $K_n \in L_G^q.$\\
For the second order part $\X,$ by ``Chen's identity'', one has the
following inequalities,
\begin{eqnarray*}
|\X_{s,t}|&=&|\sum_{i=0}^{N-1} (\X_{\tau_i, \tau_{i+1}} + X_{s,
\tau_i}\otimes  X_{\tau_i,\tau_{i+1}}) | \\
&\leq& 2 \sum_{n\geq m+1} \K_n + \max_{0 \leq i \leq N}|X_{s,
\tau_{i+1}}|
\sum_{j=0} ^{N-1} |X_{\tau_j, \tau_{j+1}}|\\
 &\leq& 2 \sum_{n \geq
m+1} \K_n + (2 \sum_{n \geq m+1} K_n)^2.
\end{eqnarray*}
Then one obtains

$$
\frac{|\X_{s,t}|}{|t-s|^{2\alpha}}\leq \sum_{n\geq 1}2 \K_n
2^{2n\alpha} + K_\alpha^2,
$$
the right side of which can be checked to belong to
$L_G^{\frac{q}{2}}$.
\end{proof}

\subsection{$G$-It\^{o} integral as rough integral}

Let us consider the $G$-Brownian motion as rough paths. Suppose
$B=(B^{(1)},...,B^{(d)})$ is a $d$-dimensional $G-$Brownian motion
and
$$
\E[|B^{(i)}B^{(j)}|] = \bar{\sigma}^2_{ij},\ \
-\E[-|B^{(i)}B^{(j)}|] = \underline{\sigma}^2_{ij}.
$$
For simplicity, suppose for some positive number $\bar{\sigma},$
$\underline{\sigma}_{ij} \leq \bar{\sigma}_{ij}<\bar{\sigma}$ for
any $i,j.$ Firstly, it is obvious that the lifted $G$-Brownian
motion, $(B,\B):=(B, \int_s^t B_{s,r} d
B_r)=(\{B^{(i)}\}_{i=1}^d,\{\int_s^t B_{s,r}^{(i)} d
B_r^{(j)}\}_{1\leq i,j\leq d} )$ satisfies \eqref{chen}.
%
There remains the analytic condition to be checked. With an
application of Theorem \ref{Kol}, the following proposition would
stand for our claim that the lifted $G$-Brownian motion belongs to
the rough path space $\FC^{\alpha} ,$ $\hc-q.s..$

\begin{prop}\quad
One has the following inequalities
$$
\|B_{s,t}\|_{L_G^q} \leq C_{q,\bar{\sigma}}|t-s|^{\frac12}, \;
\|\B_{s,t}\|_{L_G^{\frac{q}{2}}} \leq C_{q,\bar{\sigma}}
|t-s|,\text{ for any } q \geq 2,
$$
 where $C_{q,\bar{\sigma}}$ is a constant
depending on $ k $ and $\bar{\sigma}.$
\end{prop}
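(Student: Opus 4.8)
The plan is to verify the two moment bounds separately, using the structure of $G$-Brownian motion together with the Burkholder–Davis–Gundy-type estimates available in the $G$-framework (or, equivalently, the representation in Theorem \ref{Grep} that reduces everything to a supremum over classical martingales).

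First I would treat the level-one increment $B_{s,t}$. Since $B_{s,t}=B_t-B_s$ is itself $G$-normally distributed with the same law as $\sqrt{t-s}\,B_1$ (by the defining scaling property of the $G$-normal distribution, $aX+b\bar X\overset{d}{=}\sqrt{a^2+b^2}X$), one has $\E[|B_{s,t}|^q]=\E[|\sqrt{t-s}\,B_1|^q]=|t-s|^{q/2}\E[|B_1|^q]$. The moment $\E[|B_1|^q]$ is finite for every $q$ — this follows either from the explicit control representation, under which $B$ becomes a classical Brownian-type integral $\int_0^1 a_s\,dB_s$ with $|a_s|\le\bar\sigma$ so that classical Gaussian moment bounds apply uniformly, or from iterating the $G$-heat equation. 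Taking $q$-th roots gives $\|B_{s,t}\|_{L_G^q}\le C_{q,\bar\sigma}|t-s|^{1/2}$, which is the first claimed inequality.

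Next I would handle the second-level term $\B_{s,t}=\int_s^t B_{s,r}\otimes dB_r$. Component-wise, $\B^{ij}_{s,t}=\int_s^t B^{(i)}_{s,r}\,dB^{(j)}_r$ is a $G$-stochastic integral in $M_G^2$, so I would apply the $G$-Burkholder–Davis–Gundy inequality: for $p\ge 2$, $\E\bigl[\sup_{u\in[s,t]}|\int_s^u \eta_r\,dB^{(j)}_r|^p\bigr]\le C_p\,\E\bigl[(\int_s^t |\eta_r|^2\,d\langle B^{(j)}\rangle_r)^{p/2}\bigr]$, with $\eta_r=B^{(i)}_{s,r}$ and $d\langle B^{(j)}\rangle_r\le\bar\sigma^2\,dr$. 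This bounds $\E[|\B^{ij}_{s,t}|^{q/2}]$ (taking $p=q/2$) by a constant times $\E[(\int_s^t |B^{(i)}_{s,r}|^2\,dr)^{q/4}]$; then Hölder in the time integral and the already-established level-one moment bound $\E[|B^{(i)}_{s,r}|^{q/2}]\le C|r-s|^{q/4}$ give $\E[|\B^{ij}_{s,t}|^{q/2}]\le C_{q,\bar\sigma}|t-s|^{q/2}$, and taking $(q/2)$-th roots yields $\|\B_{s,t}\|_{L_G^{q/2}}\le C_{q,\bar\sigma}|t-s|$. (Alternatively one can pass through Theorem \ref{Grep} and use the classical BDG inequality under each $P^a$, taking the supremum at the end — the constants are uniform because $|a_s|\le\bar\sigma$.)

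The main obstacle is making the Burkholder–Davis–Gundy / moment estimates rigorous in the sublinear-expectation setting: one must either cite the $G$-BDG inequality (available in Peng's monograph and related references) in the precise form needed, or reduce to the classical case via the representation theorem, being careful that the integrand $B^{(i)}_{s,\cdot}$ is adapted and lies in the right $M_G^p$ space so that the $G$-integral is defined and the estimates transfer. Once that machinery is in place, the remaining computations — the scaling identity for $B_{s,t}$, Hölder's inequality in time, and collecting $q$-dependent constants — are routine. With these two bounds, Theorem \ref{Kol} applies (choosing $\beta=1/2$ and $q$ large enough that $\beta-1/q>\tfrac13$), giving the announced conclusion that the lifted $G$-Brownian motion lies in $\FC^\alpha([0,T],\R^d)$ quasi-surely for every $\alpha\in(\tfrac13,\tfrac12)$.
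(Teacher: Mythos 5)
Your proposal is correct and follows essentially the same route as the paper: the level-one bound via scaling, and the level-two bound by viewing $\int_s^t B^{(i)}_{s,r}\,dB^{(j)}_r$ as a martingale under each $P\in\mathscr{P}^{\Gamma}$ (via Theorem \ref{Grep}), applying Burkholder--Davis--Gundy together with $d\langle B^{(j)}\rangle_r\leq\bar{\sigma}^2\,dr$, and then Jensen/H\"older in time plus the level-one moment estimate. The only cosmetic differences are that the paper first reduces to $s=0$ by stationary independent increments and checks even moments $\E|\B_{0,t}|^{2k}$ before interpolating, whereas you work with $B_{s,\cdot}$ and the exponent $q/2$ directly, which is equally fine once one uses the classical BDG inequality under each $P^a$.
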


\begin{proof}
It is obvious that $\|B_{s,t}\|_{L_G^q} \leq C_q |t-s|^{\frac 12}.$
Thanks to the property of stationary and independent increment for
$G$-Brownian motion, only $\E|\int_0^t B_r dB_r|^{2k} \leq C_k
t^{2k},$ for any $k \geq 1,$ left to be checked.
\\

Note that $\int_0^t B_r^{(i)} dB_r^{(j)}$ is a square integrable
continuous martingale under each $P \in \mathscr{P}^\Gamma$ by
Theorem \ref{Grep}. A combination of Burkholder-Davis-Gundy
inequality and Jenson's inequality tells that
\begin{eqnarray*}
\E|\int_0^t B_r^{(i)} dB_r^{(j)}|^{2k} &\leq& C_k \E |\int_0^t
(B_r^{(i)})^2 d \langle B^{(j)}
\rangle_r|^k \leq C_{k,\bar{\sigma}} \E|\int_0^t (B_r^{(i)})^2 dr|^k\\
                         & \leq & C_{k,\bar{\sigma}} t^k \E(\frac{1}{t}\int_0^t
                         |B_r^{(i)}|^{2k}dr)\\
                         &\leq &  C_{k,\bar{\sigma}} t^{2k},
                         \end{eqnarray*}
where $C_{k,\bar{\sigma}}$ is a constant depending on $ k $ and
$\bar{\sigma},$ which implies the result by basic inequalities.
\end{proof}

Since $(B,\B)$ are rough paths $\hc-q.s.$, for $(Y,Y')\in
\FD^{2\alpha}_{B(\omega)} \bigcap M_G^2,$ we denote $\int Y_r d
\BB_r$ as the rough integral and $\int Y_r d B_r$ as the It\^{o}
integral with respect to $G$-Brownian motion.

\begin{prop}({\bf\small\itshape{$G$-It\^{o} stochastic integral as rough
integral}})\label{itor}\quad Assume $(Y,Y')(\omega) \in
\FD^{2\alpha}_{B(\omega)}([0,T],\mathcal{L}(\R^d,\R^n)),$ $\hc-q.s,$
and $Y,Y' \in M_G^2(0,T),$ with $Y_t,Y'_t$ in $L_G^2(\Omega_t),$ for
any $t \in [0,T].$ Furthermore, suppose
$\|\|Y\|_\alpha\|_{\mathbb{L}^2},\| \|Y'\|_\alpha \|_{\mathbb{L}^2}
< \infty.$ Then the identity holds,
\begin{equation}
\int_0^T Y_r d\BB_r = \int_0^T Y_r dB_r, \ \ \hc-q.s..
\end{equation}
 In particular, $\sum_{(u,v) \in \op} (Y_u B_{u,v}
+ Y'_u \B_{u,v})$ converges to $\int_0^T Y_r d\BB_r$ in the
$L_G^2$-norm sense.

\end{prop}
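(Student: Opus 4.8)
\noindent\emph{Proof strategy.}\quad The plan is to compare two descriptions of the same compensated Riemann sum
$$
S_{\op}:=\sum_{(u,v)\in\op}\bigl(Y_uB_{u,v}+Y'_u\B_{u,v}\bigr).
$$
On the one hand, for $\hc$-q.s.\ $\omega$ the pair $(B,\B)(\omega)$ lies in $\FC^\alpha$ (by the previous Proposition and Theorem \ref{Kol}) and $(Y,Y')(\omega)$ lies in $\FD^{2\alpha}_{B(\omega)}$, so the Gubinelli--Lyons theorem above gives $S_{\op_n}(\omega)\to\int_0^TY_rd\BB_r(\omega)$ along any sequence of partitions with $|\op_n|\to0$. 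On the other hand, I will show directly that $S_{\op}\to\int_0^TY_rdB_r$ in $L_G^2$, by splitting $S_\op$ into the ``It\^o part'' $\sum_{(u,v)\in\op}Y_uB_{u,v}$ and the ``correction part'' $\sum_{(u,v)\in\op}Y'_u\B_{u,v}$ and handling them separately. Reconciling these two convergences, via Proposition \ref{gconverge}, yields the identity.

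First I would treat the It\^o part. Setting $Y^{\op}_r:=\sum_{(u,v)\in\op}Y_u1_{[u,v)}(r)$, one has $Y^{\op}\in M_G^2(0,T)$ (here $Y_u\in L_G^2(\Omega_u)$ is used) and $\sum_{(u,v)\in\op}Y_uB_{u,v}=\int_0^TY^{\op}_rdB_r$ by the very definition of the $G$-It\^o integral on step processes. Since $|Y^{\op}_r-Y_r|\le\|Y\|_\alpha|\op|^\alpha$ for every $r$, the $G$-It\^o isometry ($\E[(\int\eta dB)^2]=\E[\int\eta^2 d\langle B\rangle]\le\bar\sigma^2\,\E\int\eta^2$, see \cite{P10}) applied under each $P\in\mathscr{P}^\Gamma$ and then supremized gives
$$
\bigl\|\textstyle\int_0^T(Y^{\op}_r-Y_r)dB_r\bigr\|_{\mathbb{L}^2}^2\le\bar\sigma^2T|\op|^{2\alpha}\,\|\,\|Y\|_\alpha\,\|_{\mathbb{L}^2}^2\longrightarrow0,
$$
which, since the integrand is in $M_G^2$ and the integral in $L_G^2$, is also $L_G^2$-convergence; hence $\sum_{(u,v)\in\op}Y_uB_{u,v}\to\int_0^TY_rdB_r$ in $L_G^2$.

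Next the correction part. I fix $P\in\mathscr{P}^\Gamma$ and let $(\mathcal{F}_t)$ be the augmented filtration of the coordinate process, under which (Theorem \ref{Grep}) $B$ is a continuous square-integrable martingale with $\underline\sigma^2\le d\langle B\rangle_t/dt\le\bar\sigma^2$ and $r\mapsto\int_u^rB_{u,s}dB_s$ is a martingale on $[u,T]$ vanishing at $r=u$. Because $Y'_u$ is $\mathcal{F}_u$-measurable, the summands $Y'_u\B_{u,v}$ over disjoint intervals are orthogonal in $L^2(P)$, so
$$
E_P\Bigl|\sum_{(u,v)\in\op}Y'_u\B_{u,v}\Bigr|^2=\sum_{(u,v)\in\op}E_P\bigl[\,|Y'_u|^2\,E_P[\,|\B_{u,v}|^2\mid\mathcal{F}_u]\,\bigr],
$$
and a conditional Burkholder--Davis--Gundy estimate together with the bound on $d\langle B\rangle/dt$ yields $E_P[\,|\B_{u,v}|^2\mid\mathcal{F}_u]\le C_{\bar\sigma}(v-u)^2$. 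Hence the sum is at most $C_{\bar\sigma}\,T\,|\op|\,\sup_{t\in[0,T]}\|Y'_t\|_{\mathbb{L}^2}^2$, uniformly in $P$; as $Y'_0$ is a constant and $\|\,\|Y'\|_\alpha\,\|_{\mathbb{L}^2}<\infty$, one has $\sup_t\|Y'_t\|_{\mathbb{L}^2}<\infty$, so $\bigl\|\sum_{(u,v)\in\op}Y'_u\B_{u,v}\bigr\|_{\mathbb{L}^2}\to0$, i.e.\ the correction part tends to $0$ in $L_G^2$.

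Finally I would conclude. Fixing any partition sequence $\op_n$ with $|\op_n|\to0$, the two steps above give $S_{\op_n}\to\int_0^TY_rdB_r$ in $L_G^2$, while on a set of full capacity $S_{\op_n}\to\int_0^TY_rd\BB_r$ pointwise. By Proposition \ref{gconverge}, some subsequence of $S_{\op_n}$ converges $\hc$-q.s.\ to $\int_0^TY_rdB_r$; since that subsequence also converges $\hc$-q.s.\ to $\int_0^TY_rd\BB_r$, the two limits coincide $\hc$-q.s., which is the asserted identity, and the ``in particular'' statement is then immediate from the $L_G^2$-convergence $S_{\op}\to\int_0^TY_rdB_r$. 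I expect the main obstacle to be precisely this reconciliation of a pathwise (q.s.) limit with an $L_G^2$-limit, which is what forces the use of Proposition \ref{gconverge}; a secondary but pervasive technicality is that the upper expectation $\|\cdot\|_{\mathbb{L}^2}$ has no dominated convergence theorem, so every moment bound above must be proved uniformly in $P\in\mathscr{P}^\Gamma$ before taking the supremum.
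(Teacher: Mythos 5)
Your proposal is correct and takes essentially the same route as the paper's proof: the same splitting of the compensated sum into $\sum_{(u,v)\in\op}Y_uB_{u,v}$ (convergent to the It\^o integral in $L_G^2$ via the isometry and the H\"older bound on $Y$) and $\sum_{(u,v)\in\op}Y'_u\B_{u,v}$ (shown to vanish in $L_G^2$ from a conditional bound $\E[|\B_{u,v}|^2\mid\Omega_u]\le C(v-u)^2$), reconciled with the quasi-sure rough-integral limit through Proposition \ref{gconverge} by subsequence extraction. The only difference is cosmetic: you estimate the correction term under each $P\in\mathscr{P}^\Gamma$ using classical martingale orthogonality and conditional BDG and then take the supremum, bounding by $C|\op|\sup_t\|Y'_t\|_{\mathbb{L}^2}^2$, whereas the paper carries out the same second-moment estimate directly with the conditional $G$-expectation, peeling off the summands one at a time and expressing the bound through $\|Y'\|_{M_G^2}^2$.
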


\begin{proof}

Suppose $\op$ is any partition of $[0,T]$ and set
$Y_t^\op:=\sum_{[u,v] \in \op}Y_u 1_{[u,v)}(t).$ Then we have
inequalities,
\begin{eqnarray*}
\E|\int_0^T (Y_t-Y_t^\op) d B_t|^2 &\leq & C_{\bar{\sigma}} \E
\int_0^T |Y_t-Y_t^\op|^2 dt \leq C_{\bar{\sigma}} \sum_{\op}
\int_u^v (t-u)^{2\alpha}
\E\|Y\|_\alpha ^2 dt\\
                                  & \leq & C_{\bar{\sigma}} T |\op|^{2\alpha}
                                  \E\|Y\|_\alpha ^2.
\end{eqnarray*}
In particular, $\sum_{\op} Y_u B_{u,v} \stackrel{|\op|\rightarrow
0}{\rightarrow} \int_0^T Y_t dB_t,$ in the sense of $L_G^2$-norm, so
according to Proposition \ref{gconverge}, there exists a
subsequence, denoted as $\sum_{\op_n}Y_u B_{u,v},$ converging to
$\int_0^T Y_r dB_r, \ \ \hc-q.s.$. \\

By the definition of rough integral, $\sum_{\op_n}(Y_u B_{u,v}+ Y'_u
\B_{u,v}) \rightarrow \int_0^T Y_r d\BB_r,$ $\hc-q.s..$ We claim
that, as the difference term of the two sequences, $\sum_{\op_n}
Y'_u \B_{u,v}$ converges to $0$ in $L_G^2-$norm sense, and then
according to this, there exists a subsequence $\{\op_{n_{k}}\}$ such
that $\sum_{\op_{n_{k}}} Y'_u \B_{u,v}$ converges to $0,$
$\hc-q.s.$, which implies the desired result. At last, one has the
following inequalities,

\begin{eqnarray*}
&& \| \sum_{\op_{n_k}}Y'_u \B_{u,v} \|_{L_G^2}^2 =
\E[\sum_{\op_{n_k}}|Y'_u|^2 |\B_{u,v}|^2 ]\\
                                          &=& \E[|Y_{u_1}'|^2 |\B_{u_1,v_1}|^2+\cdots + |Y_{u_{l_k}}'|^2
                                          \E[|\B_{u_{l_k},v_{l_k}}|^2|\Omega_{u_{l_k}}]]\\
                                          & \leq & \E[|Y_{u_1}'|^2 |\B_{u_1,v_1}|^2+\cdots + |Y_{u_{l_k-1}}'|^2 |\B_{u_{l_k-1},v_{l_k-1}}|^2+
                                          C|Y_{u_{l_k}}'|^2(v_{l_k}-u_{l_k})^2]\\
                                          &= & \E[|Y_{u_1}'|^2 |\B_{u_1,v_1}|^2+\cdots +\E[ |Y_{u_{l_k-1}}'|^2
                                          |\B_{u_{l_k-1},v_{l_k-1}}|^2 +C|Y_{u_{l_k}}'|^2(v_{l_k}-u_{l_k})^2
                                          |\Omega_{u_{l_k-1}}]]\\
                                          &\leq & \E[|Y_{u_1}'|^2 |\B_{u_1,v_1}|^2+\cdots + \E[ |Y_{u_{l_k-1}}'|^2
                                          |\B_{u_{l_k-1},v_{l_k-1}}|^2|
                                          \Omega_{u_{l_k-1}}]
                                          + C \E[|Y_{u_{l_k}}'|^2(v_{l_k}-u_{l_k})^2 |
                                          \Omega_{u_{l_k-1}}]]\\
                                          &\leq & \E[|Y_{u_1}'|^2 |\B_{u_1,v_1}|^2+\cdots + |Y_{u_{l_k-2}}'|^2
                                          |\B_{u_{l_k-2},v_{l_k-2}}|^2
                                          + C(|Y_{u_{l_k-1}}'|^2 (v_{l_k-1}-u_{l_k-1})^2+|Y_{u_{l_k}}'|^2(v_{l_k}-u_{l_k})^2)]\\
                                          & \leq & \cdots \cdots\\
                                          & \leq & C\E[\sum_{\op_{n_k}} |Y'_u|^2
                                          (v-u)^2]\\
& \leq & C |\op_{n_k}| \|Y'\|_{M_G^2}^2.
\end{eqnarray*}
The last inequality follows from the convergence, $\sum_{(u,v) \in
\op} Y'_u 1_{[u,v)}(t) \stackrel{|\op|}{\rightarrow} Y'$ in the
sense of $M_G^2.$ Indeed,
\begin{eqnarray*}
\E \int_0^T |\sum_{(u,v) \in \op} Y'_u 1_{[u,v)}(t) - Y'_t |^2 dt
&\leq &
\sum_\op \E \int_u^v |Y'_u-Y'_t|^2 dt\\
&\leq & \E\|Y'\|_\alpha^2 T |\op|^{2\alpha} /(2\alpha+1) \rightarrow
0.
\end{eqnarray*}
\end{proof}

\begin{rem}\quad
Here $Y,Y'\in M_G^2$ means every element of $Y$ and $Y'$ belongs to
$M_G^2.$ According to the above proof, one can simply check that the
assumption $\| \|Y'\|_\alpha \|_{\mathbb{L}^2} < \infty$ could be
replaced by $|Y'|$ bounded.

\end{rem}

\begin{example}\label{ex1}\quad
$(i).$ For fixed $\alpha \in (\frac13,\frac12)$ and any function $F
\in \oc^2(\R,\R) $ with polynomial growth for the first and second
order derivatives, i.e.,
$$
|D^2F(x)|+|DF(x)| \leq C(1+|x|^k),
$$
for some positive constants $C,k,$ it is simple to check that
$(Y,Y'):= (F(B),DF(B)),$ where $B$ is a one-dimensional $G-$Brownian
motion, satisfies the assumption in the above proposition. Indeed,
according to Taylor's expansion,
\begin{eqnarray*}
F(B_t)-F(B_s) &=& DF(B_s+\lambda_1 (B_t-B_s))(B_t-B_s),\\
DF(B_t)-DF(B_s)&=& D^2F(B_s+\lambda_2 (B_t-B_s))(B_t-B_s),\\
F(B_t)-F(B_s)&=& DF(B_s )(B_t-B_s)+ \frac12 D^2F(B_s+\lambda_3
(B_t-B_s)) (B_t-B_s)^2,
\end{eqnarray*}
for some $\lambda_i(\omega) \in [0,1], i=1,2,3.$ \\

 By Theorem \ref{Kol}, it holds that,
\begin{eqnarray*}
\|F(B)\|_\alpha &\leq& \sup_{s,t\in [0,T] \atop \lambda_1 \in [0,1]
}
|DF(B_s+\lambda_1 (B_t-B_s) )| \|B\|_\alpha, \ \ \ \hc-q.s.;\\
\|DF(B)\|_\alpha &\leq& \sup_{s,t\in [0,T] \atop \lambda_2 \in [0,1]
}
|DF(B_s+\lambda_2 (B_t-B_s) )| \|B\|_\alpha, \ \ \ \hc-q.s.;\\
\|R^Y\|_{2\alpha} &\leq& \frac12 \sup_{s,t\in [0,T] \atop
\lambda_3\in[0,1]}|D^2F(B_s+\lambda_3(B_t-B_s) )| \|B\|_\alpha^2, \ \ \ \hc-q.s.,\\
\end{eqnarray*}
so $(F(B),DF(B)) \in \FD^{2\alpha}_{B(\omega)}([0,T],\R),\hc-q.s..$
Furthermore, by the polynomial growth condition and Theorem
\ref{Kol}, one can simply check that
$$
\|\|Y\|_\alpha\|_{\mathbb{L}^2},\| \|Y'\|_\alpha \|_{\mathbb{L}^2} <
\infty.
$$

$(ii).$ For a given function $f \in \oc^1(\R,\R),$ which satisfies
$$
|f(x)|+|Df(x)| \leq K(1+|x|^d),
$$
for some positive constants $K,d,$ define $(Z,Z'):=
(\int_0^.f(B_r)dB_r ,f(B_.)).$ Firstly, we need to show $Z \in
M_G^2(0,T).$ Define $Z_t^N:= \sum_{i=0}^{k_N} Z_{t_i}
1_{[t_i^N,t_{i+1}^N)}(t),$ where $\op^N:=\{0=t_0^N <
t_1^N<...<t_{k_N}^N=T \}$ is any sequence of partition with modulus
$|\op^N|$ converging to $0,$ and then one could obtain $Z^N
\stackrel{N}{\rightarrow} Z $ under the norm of $M_G^2.$ Indeed,
\begin{eqnarray*}
\E \int_0^T (Z_t^N-Z_t)^2 dt &\leq & \sum_{\op^N}
\int_{t_i^N}^{t_{i+1}^N} \E(Z_t^N-Z_t)^2 dt\\
                             &= & \sum_{\op^N}
\int_{t_i^N}^{t_{i+1}^N} \E (\int_{t_i^N}^t f(B_r) dB_r)^2 dt\\
                             &\leq & C_{\bar{\sigma}, K,d,}T |\op^N|
                             \rightarrow 0,
\end{eqnarray*}
where $C_{\bar{\sigma}, K,d,}$ is a constant depending on
$\bar{\sigma}, K,d.$ Secondly, one needs to check that
$(Z,Z')\in\FD^{2\alpha}_{B(\omega)}([0,T],\R),\hc-q.s..$ According
to Theorem \ref{Kol}, it is simple to obtain that $Z \in \oc^\alpha,
\hc-q.s.,$ and $\|\|Z\|_\alpha\|_{\mathbb{L}^q} < \infty,$ for any
$q \geq 2$ and $\alpha \in (\frac13, \frac12- \frac{1}{q}).$
Finally, only $R^Z \in \oc^{2\alpha}$ needs to be checked. Define
$H(x):=\int_0^x f(y) dy.$ Then $DH(x)=f(x), $ and $H(x) $ has
polynomial growth for the first and second derivatives. By
$G$-It\^{o}'s formula,
\begin{eqnarray*}
H(B_t)-H(B_s)= Z_{s,t} + \frac12 \int_s^t D f(B_r) d\langle B
\rangle_r, \ \ \ \hc-q.s..
\end{eqnarray*}
According to example $(i)$, $R^{H(B)}_{s,t}:=
H(B_t)-H(B_s)-f(B_s)B_{s,t}$ quasi-surely belongs to
$\oc^{2\alpha}.$ Since $\langle B \rangle_.$ is absolutely
continuous, one could say $R^Z_{s,t}:=Z_{s,t}-f(B_s)B_{s,t} \in
\oc^{2\alpha}, \hc-q.s..$

\end{example}

%

\begin{rem}\quad
It is easy to see that one could replace $B$ with It$\hat{o}$
processes and apply similar tricks for more examples.

\end{rem}


\subsection{$G$-Stratonovich integral as rough integral}

Firstly, we provide a description of Stratonovich integral with
respect to $G$-Brownian motion. Define $\langle Y,B^{(k)}
\rangle_t:= \lim_{|\op| \rightarrow 0} \sum_{(u,v) \in
\op}Y_{u,v}B_{u,v}^{(k)},$ for any $k=1,...,d,$ whenever the limit
exists in $L_G^1(\Omega_t),$ for any $t \in [0,T].$

\begin{prop}\label{b condi}\quad
For any $\beta=(\beta^{(1)},...,\beta^{(d)}) \in M_G^2,$ define
$Y_t:=\int_0^t \beta_r^{(l)} dB_r^{(l)}.$ Then one has
\begin{equation}
\langle Y, B^{(k)} \rangle_t = \int_0^t \beta_r^{(l)} d\langle
B^{(l)},B^{(k)} \rangle_r, \ \ \hc-q.s..
\end{equation}

\end{prop}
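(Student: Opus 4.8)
The plan is to obtain the formula from the $G$-It\^{o} product rule together with a summation-by-parts argument. First I would apply $G$-It\^{o}'s formula (Theorem \ref{GIF}) to the two-dimensional It\^{o} process $(Y_t,B_t^{(k)})$ and to $\Phi(x_1,x_2)=x_1x_2$, whose first-order derivatives have linear growth and whose second-order derivatives are constant. Since $Y$ has It\^{o} coefficients $\alpha\equiv\eta\equiv0$ and diffusion coefficient $\beta^{(l)}$, while $B^{(k)}$ has diffusion coefficient $\delta_{jk}$ and no drift or $d\langle B\rangle$-part, the formula collapses to, $\hc$-q.s.,
$$
Y_tB_t^{(k)}=\int_0^t B_r^{(k)}\beta_r^{(l)}\,dB_r^{(l)}+\int_0^t Y_r\,dB_r^{(k)}+\int_0^t \beta_r^{(l)}\,d\langle B^{(l)},B^{(k)}\rangle_r,
$$
the last integral coming from the $\frac12\partial^2_{x_1x_2}\Phi$ term and the symmetry $\langle B^{(i)},B^{(j)}\rangle=\langle B^{(j)},B^{(i)}\rangle$. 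It therefore suffices to prove that the Riemann sums defining $\langle Y,B^{(k)}\rangle_t$ converge in $L_G^1$ to $Y_tB_t^{(k)}-\int_0^t B_r^{(k)}\beta_r^{(l)}\,dB_r^{(l)}-\int_0^t Y_r\,dB_r^{(k)}$.

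For this I would use the elementary identity $Y_vB_v^{(k)}-Y_uB_u^{(k)}=Y_{u,v}B_{u,v}^{(k)}+Y_uB_{u,v}^{(k)}+Y_{u,v}B_u^{(k)}$; summing it over a partition $\op$ of $[0,t]$ and using $Y_0=B_0^{(k)}=0$ gives
$$
\sum_{(u,v)\in\op}Y_{u,v}B_{u,v}^{(k)}=Y_tB_t^{(k)}-\sum_{(u,v)\in\op}Y_uB_{u,v}^{(k)}-\sum_{(u,v)\in\op}Y_{u,v}B_u^{(k)}.
$$
So the claim reduces to two convergences in $L_G^2$ (a fortiori in $L_G^1$): that $\sum Y_uB_{u,v}^{(k)}\to\int_0^t Y_r\,dB_r^{(k)}$, and, after rewriting $Y_{u,v}B_u^{(k)}=\int_u^v B_u^{(k)}\beta_r^{(l)}\,dB_r^{(l)}$ (the factor $B_u^{(k)}$ being $\Omega_u$-measurable), that $\sum Y_{u,v}B_u^{(k)}\to\int_0^t B_r^{(k)}\beta_r^{(l)}\,dB_r^{(l)}$. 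Combining these with the product rule, and using that $L_G^1$-convergence admits a $\hc$-q.s.\ convergent subsequence (Proposition \ref{gconverge}), yields the stated q.s.\ identity and, in particular, the existence of $\langle Y,B^{(k)}\rangle$.

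Both convergences are instances of the $M_G^2$-approximation already used in the proof of Proposition \ref{itor} and in Example \ref{ex1}$(ii)$. One first checks $Y\in M_G^2(0,T)$ (via $\E|Y_t|^2\le C_{\bar\sigma}\,\E\int_0^t|\beta_r|^2\,dr$ and left-endpoint step approximation, exactly as for $\int_0^\cdot f(B_r)\,dB_r$ there). Writing $Y_r^\op:=\sum_{(u,v)}Y_u1_{[u,v)}(r)$ and $(B^{(k)})_r^\op:=\sum_{(u,v)}B_u^{(k)}1_{[u,v)}(r)$, one has $\sum Y_uB_{u,v}^{(k)}=\int_0^t Y_r^\op\,dB_r^{(k)}$ and $\sum Y_{u,v}B_u^{(k)}=\int_0^t (B^{(k)})_r^\op\beta_r^{(l)}\,dB_r^{(l)}$; for bounded $\beta$ the $G$-It\^{o} isometry together with $\E|B_{u,r}^{(k)}|^2\le\bar\sigma^2(r-u)$ gives $\|Y^\op-Y\|_{M_G^2}\to0$ and $\|(B^{(k)})^\op\beta^{(l)}-B^{(k)}\beta^{(l)}\|_{M_G^2}\to0$ as $|\op|\to0$, so the continuity of the $G$-It\^{o} integral delivers the two limits.

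I expect the main obstacle to be the passage from bounded $\beta$ to a general $\beta\in M_G^2$, both in the product rule (Theorem \ref{GIF} is stated for bounded coefficients) and in the last estimate, where $\|\beta\|_\infty$ is unavailable and, in the $G$-framework, $\int_0^T\E|\beta_r|^2\,dr$ need not even be finite. This is handled by approximating $\beta$ by step processes $\beta^N\to\beta$ in $M_G^2$ and interchanging the partition limit with the approximation limit; the interchange is legitimate because the resulting errors are uniform in $\op$, which reduces to the isometry-type bound $\E\big|\int_s^t(\beta-\beta^N)^{(l)}\,dB^{(l)}\big|^2\le C_{\bar\sigma}\,\E\int_s^t|\beta-\beta^N|^2\,dr$ together with the continuity of $\eta\mapsto\int_0^\cdot\eta\,d\langle B^{(l)},B^{(k)}\rangle$ on $M_G^1$. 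Everything else is a routine repetition of the estimates already appearing earlier in the paper.
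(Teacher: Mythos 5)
Your proposal is correct in substance, but it reaches the result by a genuinely different route from the paper's in the key step. The paper's Step 1 treats a step integrand $\beta\in M_G^{2,0}$ by hand: it merges the sampling partition $\op$ with the partition $\oq$ of $\beta$, rewrites $\sum_\op Y_{u,v}B^{(k)}_{u,v}$ as $\sum_{\op\vee\oq}(\xi_u B^{(l)}_{u,v})B^{(k)}_{u,v}$ plus cross terms that are shown to vanish in $L_G^1$, and then invokes Lemma 4.6 of Chapter 3 in Peng's notes (essentially the construction of the mutual variation) to identify the limit as $\int\xi\,d\langle B^{(l)},B^{(k)}\rangle$. You instead never touch the definition of $\langle B^{(l)},B^{(k)}\rangle$ as a limit of products of increments: you get the covariation term from the second-order part of the $G$-It\^o product formula and use Abel summation to reduce the covariation Riemann sums to two ordinary It\^o--Riemann-sum convergences, so the identification of the limit comes for free. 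What your route costs is a few extra membership checks -- $Y\in M_G^2$, and $B^{(k)}\beta^N\in M_G^2$ so that $\int_0^t B^{(k)}_r\beta^{N,(l)}_r dB^{(l)}_r$ makes sense; both are fine because $\beta^N$ is a bounded step process, and you correctly observe that for a general $\beta\in M_G^2$ these intermediate integrals need not exist, so the passage to general $\beta$ must be performed on the final identity only. That passage is then exactly the paper's Step 2: approximate $\beta$ by $\beta^N$ in $M_G^2$ and control $\E|\sum_\op(Y-Y^N)_{u,v}B^{(k)}_{u,v}|$ uniformly in $\op$; note that, since $\E$ is only subadditive, this uniform bound should be run under each $P\in\mathscr{P}^\Gamma$ (linear expectation, Cauchy--Schwarz over the subintervals) and the supremum taken at the end, which is precisely how the paper closes its estimate and is what your ``isometry-type bound'' shorthand amounts to. With that caveat spelled out, your plan is a valid and somewhat more structural alternative; the paper's argument is more elementary and self-contained modulo the cited lemma, while yours leans on Theorem \ref{GIF} and makes the limit transparent.
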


\begin{proof}
By linearity one only needs to show the case that $\beta$ is
one-dimensional, i.e. $Y_t=\int_0^t\beta_r dB_r^{(l)},$ for any
fixed
$l=1,...,d.$\\

 Step1: Suppose that $\beta_s \in M_G^{2,0},$ with the form
$\beta_s=\sum_{i=0}^{N-1} \xi_i 1_{[t_i,t_{i+1})}(s),$ $|\xi_i|\leq
K,i=0...N-1,$ and the partition $\mathcal {Q} := \{0=t_0<t_1 < t_2
<...<t_N=t  \}$ fixed. \\

For any partition $\op=\{0=\tau_0<\tau_1 < \tau_2 <...<\tau_M=t \},$
satisfying $|\op| \leq |\oq|,$ it holds that
\begin{eqnarray*}
&&\sum_\op
(Y_{\tau_i,\tau_{i+1}}B^{(k)}_{\tau_i,\tau_{i+1}})=\sum_\op
(\int_{\tau_i}^{\tau_{i+1}}\beta_s dB^{(l)}_s B^{(k)}_{\tau_i,\tau_{i+1}})\\
&=& \sum_{[\tau_i,\tau_{i+1}) \subset [t_j,t_{j+1})} (\xi_j
B^{(l)}_{\tau_i,\tau_{i+1}})B^{(k)}_{\tau_i,\tau_{i+1}}\\
&+ & \sum_{t_j \in
[\tau_i,\tau_{i+1})}(\xi_{j-1}(B_{t_j}^{(l)}-B^{(l)}_{\tau_i})  +
\xi_j(B^{(l)}_{\tau_{i+1}}-B^{(l)}_{t_j})) B^{(k)}_{\tau_i, \tau_{i+1}}\\
&=& \sum_{[\tau_i,\tau_{i+1}) \subset [t_j,t_{j+1})} (\xi_j
B^{(l)}_{\tau_i,\tau_{i+1}})B^{(k)}_{\tau_i,\tau_{i+1}}\\
&+ & \sum_{t_j \in
[\tau_i,\tau_{i+1})}(\xi_{j-1}(B_{t_j}^{(l)}-B^{(l)}_{\tau_i})  +
\xi_j(B^{(l)}_{\tau_{i+1}}-B^{(l)}_{t_j})) (B^{(k)}_{\tau_i, t_j}+B^{(k)}_{ t_j,\tau_{i+1}})\\
&=& \sum_{\op \vee \oq} (\xi_u B^{(l)}_{u,v})B_{u,v}^{(k)} +
\sum_{j=1}^{N-1}\xi_{j-1} B^{(l)}_{\tau_i,
t_j}B^{(k)}_{t_j,\tau_{i+1}} +\sum_{j=1}^{N-1}\xi_{j}
B^{(k)}_{\tau_i, t_j}B^{(l)}_{t_j,\tau_{i+1}}, \ \ \hc-q.s.,
\end{eqnarray*}
in the last equation of which we patch the two partitions together. \\

 According to Chapter 3 Lemma 4.6 in \cite{P10}, it suffices to show the convergence
$$
\sum_{j=1}^{N-1}\xi_{j-1} B^{(l)}_{\tau_i,
t_j}B^{(k)}_{t_j,\tau_{i+1}} +\sum_{j=1}^{N-1}\xi_{j}
B^{(k)}_{\tau_i, t_j}B^{(l)}_{t_j,\tau_{i+1}} \stackrel{|\op|}{
\rightarrow} 0,
$$
in the sense of $L_G^1.$ Indeed,
\begin{eqnarray*}
\E[|\sum_{j=1}^{N-1}\xi_{j-1} B^{(l)}_{\tau_i,
t_j}B^{(k)}_{t_j,\tau_{i+1}} +\sum_{j=1}^{N-1}\xi_{j}
B^{(k)}_{\tau_i, t_j}B^{(l)}_{t_j,\tau_{i+1}}|] \leq   2(N-1) K
|\op| \rightarrow 0.
\end{eqnarray*}

Step2: For any $\beta\in M_G^2,$ assume $\{\beta_s^n\}_n \subset
M_G^{2,0},$ converges to $\beta$ in the sense of $M_G^2.$ One has
inequalities,

\begin{eqnarray}
& &\E[|\sum_{\op}\int_u^v \beta_s dB^{(l)}_s B^{(k)}_{u,v}- \int_0^t
\beta_s
d\langle B^{(l)},B^{(k)}\rangle_s|]  \nonumber \\
&\leq & \E[|\sum_{\op}(\int_u^v \beta_s dB^{(l)}_s
B^{(k)}_{u,v}-\int_u^v
\beta_s^n dB^{(l)}_s B^{(k)}_{u,v}+\int_u^v \beta_s^n dB^{(l)}_s B^{(k)}_{u,v} \nonumber  \\
&-&\int_u^v \beta_s^n d\langle B^{(l)},B^{(k)}\rangle_s + \int_u^v
\beta_s^n
d\langle B^{(l)},B^{(k)}\rangle_s - \int_u^v \beta_s d\langle B^{(l)},B^{(k)}\rangle_s)|]  \nonumber \\
&\leq &  \E[| \sum_{\op} \int_u^v \beta_s dB^{(l)}_s
B^{(k)}_{u,v}-\int_u^v
\beta_s^n dB^{(l)}_s B^{(k)}_{u,v}|] + \E[|\sum_\op \int_u^v \beta_s^n dB^{(l)}_s B^{(k)}_{u,v} \nonumber \\
&-&\int_u^v \beta_s^n d\langle B^{(l)},B^{(k)}\rangle_s|] +
\E[|\sum_{\op}\int_u^v (\beta_s^n - \beta_s)d\langle
B^{(l)},B^{(k)}\rangle_s |] \label{*}
\end{eqnarray}

The second term in (\ref{*}) converges to $0$ by Step1. According to
definitions, the third term also converges to $0$.\\

At last, for the first term, since the calculation is carried in
$L_G^2$ and $B$ is a martingale under each $P\in \mathscr{P},$ one
obtains that
\begin{eqnarray*}
&& \E[|\sum_\op \int_u^v (\beta_s-\beta_s^n)dB^{(l)}_s B^{(k)}_{u,v}|]\\
&\leq& \sup_{P \in \mathscr{P}} \sum_\op E_P[|\int_u^v
(\beta_s-\beta_s^n)dB^{(l)}_s B^{(k)}_{u,v}|]\\
&\leq& \sup_{P \in \mathscr{P}} \bar{\sigma}^2 \sum_\op
\{E_P[\int_u^v (\beta_s-\beta_s^n)^2ds]\}^\frac12 |v-u|^\frac12\\
&\leq& \bar{\sigma}^2 \sup_{P \in \mathscr{P}} T^\frac12 \{\sum_\op
E_P[\int_u^v (\beta_s-\beta_s^n)^2ds]\}^\frac12\\
&\leq& \bar{\sigma}^2 T^\frac12 \{\E[\int_0^T
(\beta_s-\beta_s^n)^2ds\}^\frac12 \rightarrow 0.
\end{eqnarray*}

\end{proof}

\begin{coro}\quad
For
\begin{equation}
Y_t=\xi + \int_0^t \beta_s^{(j)} dB^{(j)}_s+\int_0^t \alpha_s ds +
\int_0^t \gamma^{(j,l)}_s d \langle B^{(j)},B^{(l)} \rangle_s,
\label{G-ito}
\end{equation}
with $\beta \in M_G^2 $, and $\alpha, \gamma \in
M_G^{1+\delta}(0,T), $ for some $\delta > 0,$ one has the
expression,
$$
\langle Y,B^{(k)}\rangle _t = \int_0^t \beta^{(j)}_s d\langle
B^{(j)},B^{(k)} \rangle_s, \ \ \hc-q.s..
$$
\end{coro}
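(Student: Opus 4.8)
The plan is to exploit the linearity of the bracket $\langle\,\cdot\,,B^{(k)}\rangle$ together with the one already-settled special case, Proposition \ref{b condi}. Writing $Y=\xi+Y^{\mathrm I}+Y^{\mathrm{II}}+Y^{\mathrm{III}}$ with $Y^{\mathrm I}_t=\int_0^t\beta^{(j)}_s\,dB^{(j)}_s$, $Y^{\mathrm{II}}_t=\int_0^t\alpha_s\,ds$ and $Y^{\mathrm{III}}_t=\int_0^t\gamma^{(j,l)}_s\,d\langle B^{(j)},B^{(l)}\rangle_s$, for any partition $\op$ of $[0,t]$ one has, since $\xi_{u,v}=0$,
$$\sum_{(u,v)\in\op}Y_{u,v}B^{(k)}_{u,v}=\sum_{(u,v)\in\op}Y^{\mathrm I}_{u,v}B^{(k)}_{u,v}+\sum_{(u,v)\in\op}Y^{\mathrm{II}}_{u,v}B^{(k)}_{u,v}+\sum_{(u,v)\in\op}Y^{\mathrm{III}}_{u,v}B^{(k)}_{u,v}.$$
It therefore suffices to show that, as $|\op|\to0$, the first sum converges in $L_G^1(\Omega_t)$ to $\int_0^t\beta^{(j)}_s\,d\langle B^{(j)},B^{(k)}\rangle_s$ while the other two converge to $0$; then the limit defining $\langle Y,B^{(k)}\rangle_t$ exists and equals the asserted value, $\hc$-q.s.

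The first sum is handled directly: its limit is $\langle Y^{\mathrm I},B^{(k)}\rangle_t=\int_0^t\beta^{(j)}_s\,d\langle B^{(j)},B^{(k)}\rangle_s$, $\hc$-q.s., which is exactly Proposition \ref{b condi}. For the drift sum, I would estimate $|Y^{\mathrm{II}}_{u,v}|=|\int_u^v\alpha_s\,ds|\le\int_u^v|\alpha_s|\,ds$, so that
$$\E\Big|\sum_{(u,v)\in\op}Y^{\mathrm{II}}_{u,v}B^{(k)}_{u,v}\Big|\le\E\Big[\Big(\sup_{|v-u|\le|\op|}|B^{(k)}_{u,v}|\Big)\int_0^t|\alpha_s|\,ds\Big]\le|\op|^{\alpha}\,\E\Big[\|B^{(k)}\|_{\alpha}\int_0^t|\alpha_s|\,ds\Big],$$
using the $\hc$-q.s. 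H\"older bound $\sup_{|v-u|\le|\op|}|B^{(k)}_{u,v}|\le\|B^{(k)}\|_{\alpha}|\op|^{\alpha}$ from Theorem \ref{Kol}. Applying H\"older's inequality for $\E$ with exponents $\tfrac{1+\delta}{\delta}$ and $1+\delta$, the elementary bound $(\int_0^t|\alpha_s|\,ds)^{1+\delta}\le t^{\delta}\int_0^t|\alpha_s|^{1+\delta}\,ds$, and the finiteness of $\E[\|B^{(k)}\|_{\alpha}^{(1+\delta)/\delta}]$ (again Theorem \ref{Kol}) together with $\|\alpha\|_{M_G^{1+\delta}}<\infty$, this is $\le C|\op|^{\alpha}\to0$. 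The quadratic-variation sum is treated identically: since, by the one-dimensional results and polarization, each $\langle B^{(j)},B^{(l)}\rangle$ is absolutely continuous with density bounded in absolute value by $\bar{\sigma}^2$, one has $|Y^{\mathrm{III}}_{u,v}|\le\bar{\sigma}^2\sum_{j,l}\int_u^v|\gamma^{(j,l)}_s|\,ds$, and the same chain of estimates with $\gamma^{(j,l)}\in M_G^{1+\delta}$ in place of $\alpha$ yields convergence to $0$.

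Combining the three limits and using linearity of the $L_G^1$-limit gives the corollary. The only genuinely delicate point is that in the $G$-framework one cannot appeal to dominated convergence, so the vanishing of the drift and bracket sums has to be extracted from the explicit $\hc$-q.s. H\"older regularity of $G$-Brownian motion supplied by Theorem \ref{Kol}; this is exactly what forces the integrability exponent $1+\delta$, since it is what makes the products $\|B^{(k)}\|_{\alpha}\int_0^t|\alpha_s|\,ds$ and $\|B^{(k)}\|_{\alpha}\int_0^t|\gamma^{(j,l)}_s|\,ds$ $G$-integrable. A minor additional point is the passage, for $j\neq l$, from the mutual variation $\langle B^{(j)},B^{(l)}\rangle$ to the one-dimensional quadratic variations via polarization in order to obtain the absolute-continuity bound used above.
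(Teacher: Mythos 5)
Your proof is correct, and its skeleton is the same as the paper's: split $Y$ by linearity, invoke Proposition \ref{b condi} for the martingale part, and show that the drift and cross-variation parts contribute nothing to $\langle Y,B^{(k)}\rangle$ (the paper words this as ``without loss of generality it suffices to show'' the two Riemann sums vanish in $L_G^1$, and it too reduces the $d\langle B^{(j)},B^{(l)}\rangle$ term to a $ds$-type term via the boundedness of the density). Where you differ is the device used to kill those two sums. The paper works under each $P\in\mathscr{P}^\Gamma$ separately: H\"older's inequality with exponents $1+\delta$ and $\tfrac{1+\delta}{\delta}$ applied to $|\int_u^v\gamma_s\,ds|\,|B^{(k)}_{u,v}|$, the martingale moment scaling $[E_P|B^{(k)}_{u,v}|^{(1+\delta)/\delta}]^{\delta/(1+\delta)}\leq \bar{\sigma}\,|v-u|^{1/2}$, and then a discrete H\"older inequality over the partition, giving a bound of order $|\op|^{1/2}\|\gamma\|_{M_G^{1+\delta}}$. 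You instead bound the sums pathwise by $\|B^{(k)}\|_\alpha\,|\op|^\alpha\int_0^t(|\alpha_s|+C_{\bar\sigma}\sum_{j,l}|\gamma^{(j,l)}_s|)\,ds$ and apply H\"older for the sublinear expectation, giving order $|\op|^\alpha$. Both bounds tend to zero, so your route works; what the paper's route buys is that it needs only increment moments under each $P$ (no Kolmogorov criterion, no H\"older norm of $B$) and a better rate, whereas your route needs $\E[\|B^{(k)}\|_\alpha^{(1+\delta)/\delta}]<\infty$, an exponent that blows up as $\delta\to 0$. This is the one point where you should be slightly more careful: Theorem \ref{Kol} gives a dominating variable $K_\alpha\in L_G^q$ only for $q$ chosen in advance with $\alpha<\tfrac12-\tfrac1q$, so you must pick $q\geq\max\{2,\tfrac{1+\delta}{\delta}\}$ (possible because the increment moment bounds for $G$-Brownian motion hold for every $q\geq 2$) and note that the elementary inequality $\sup_{|v-u|\leq|\op|}|B^{(k)}_{u,v}|\leq\|B^{(k)}\|_\alpha|\op|^\alpha$ is just the definition of the H\"older seminorm, Theorem \ref{Kol} being what makes that seminorm quasi-surely finite and integrable. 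With that made explicit, your argument is complete; your uniform treatment of the drift and bracket terms is in fact slightly tidier than the paper, which proves only the $\gamma$-term and declares the $\alpha$-term analogous.
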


\begin{proof}
Without loss of generality, the proof can be done by showing
$$
\sum_{(u,v)\in \op}  \int_u^v \gamma_s d\langle B^{(j)} \rangle_s
B^{(k)}_{u,v} \stackrel{|\op|}{\rightarrow} 0,\; \; \sum_{(u,v)\in
\op} \int_u^v \alpha_s ds B^{(k)}_{u,v}
\stackrel{|\op|}{\rightarrow} 0,
$$
in the sense of $L_G^1.$ We only show the first convergence. Indeed,
by boundedness for $\frac{d\langle B^{(j)} \rangle_t}{dt},$ one has
the following inequalities
\begin{eqnarray*}
&&\E[|\sum_\op  \int_u^v \gamma_s d\langle B^{(j)} \rangle_s
B^{(k)}_{u,v}|] \leq
\bar{\sigma}^2 \sup_{P \in \mathscr{P}} \sum_\op E_P[|B^{(k)}_{u,v}| |\int_u^v \gamma_s d s|]\\
& \leq & \bar{\sigma}^2 \sup_{P \in \mathscr{P}} \sum_\op
[E_P|\int_u^v \gamma_s ds|^{1+\delta}]^\frac{1}{1+\delta}
[E_P|B^{(k)}_{u,v}|^\frac{1+\delta}{\delta}]^\frac{\delta}{1+\delta}\\
& \leq & \bar{\sigma}^3 \sup_{P \in \mathscr{P}} \sum_\op
[E_P\int_u^v |\gamma_s|^{1+\delta} ds
]^\frac{1}{1+\delta}|v-u|^{\frac{\delta}{1+\delta}}  |v-u|^\frac12\\
&\leq & \bar{\sigma}^3 \sup_{P \in \mathscr{P}}  |\op|^\frac12
[\sum_\op E_P \int_u^v |\gamma_s|^{1+\delta} ds]^\frac{1}{1+\delta}
T^{\frac{\delta}{1+\delta}}\\
&\leq & \bar{\sigma}^3 T^{\frac{\delta}{1+\delta}}
\|\gamma\|_{M_G^{1+\delta}}|\op|^\frac12 \rightarrow 0 .
\end{eqnarray*}

\end{proof}

\begin{defi}({\bf\small\itshape{Stratonovich integration with respect to $G$-Brownian motion}})\quad
Suppose $Y=(Y_1,...,Y_d) \in M_G^2(0,T),$ and $\langle
Y^{(i)},B^{(i)} \rangle$ exist for any $i$. The Stratonovich
integral of $Y$ against $B,$ with value in $L_G^1,$ is given by
identity:
\begin{equation}
\int_0^t Y_s^{(i)} \circ dB^{(i)}_s := \int_0^t Y^{(i)}_s dB^{(i)}_s
+\frac12 \langle Y^{(i)},B^{(i)} \rangle_t, \ \ \hc-q.s..
\end{equation}
\end{defi}

\begin{prop}\quad
Assume $Y$ defined as $Y_t:=\int_0^t \beta^{(j)}_s dB^{(j)}_s, $
with $\beta \in M_G^2.$ Then, for partitions $\op$ of $[0,t]$ with
$|\op|\rightarrow 0,$ it holds that
\begin{equation}
L_G^1-\lim_{|\op| \rightarrow 0} \sum_{(u,v)\in \op}
\frac{Y_u+Y_v}{2} B^{(k)}_{u,v}= \int_0^t Y_s \circ dB^{(k)}_s
\end{equation}
\end{prop}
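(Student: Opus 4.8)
The plan is to reduce the trapezoidal sum to a left-point Riemann sum plus a cross term, and then to identify the two limits separately. Using the algebraic identity $\frac{Y_u+Y_v}{2}=Y_u+\frac12 Y_{u,v}$, one writes $\sum_{(u,v)\in\op}\frac{Y_u+Y_v}{2}B^{(k)}_{u,v}=\sum_{(u,v)\in\op}Y_uB^{(k)}_{u,v}+\frac12\sum_{(u,v)\in\op}Y_{u,v}B^{(k)}_{u,v}$, so it suffices to find the $L_G^1$-limit of each of these two sums as $|\op|\to 0$ and to add them.

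For the cross term I would simply invoke Proposition \ref{b condi}. Since $Y_t=\int_0^t\beta^{(l)}_s dB^{(l)}_s$ with $\beta\in M_G^2$, that proposition says the mutual variation $\langle Y,B^{(k)}\rangle_t=\int_0^t\beta^{(l)}_r d\langle B^{(l)},B^{(k)}\rangle_r$ exists, precisely as the $L_G^1$-limit of $\sum_{(u,v)\in\op}Y_{u,v}B^{(k)}_{u,v}$. Hence $\frac12\sum_{(u,v)\in\op}Y_{u,v}B^{(k)}_{u,v}\to\frac12\langle Y,B^{(k)}\rangle_t$ in $L_G^1$, with nothing more to prove.

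For the left-point term, set $Y^\op_s:=\sum_{(u,v)\in\op}Y_u 1_{[u,v)}(s)$, so that $\sum_{(u,v)\in\op}Y_uB^{(k)}_{u,v}=\int_0^t Y^\op_s dB^{(k)}_s$. The key step is to show $Y^\op\to Y$ in $M_G^2(0,t)$. For $s\in[u,v)$ one has $Y_s-Y^\op_s=Y_{u,s}=\int_u^s\beta^{(l)}_r dB^{(l)}_r$, and the $L_G^2$-continuity estimate for the $G$-It\^o integral gives $\E|Y_{u,s}|^2\le C_{\bar{\sigma}}\E\int_u^s|\beta_r|^2 dr$; integrating in $s$ over $[u,v)$ and summing over the partition produces a bound of order $|\op|\,\|\beta\|_{M_G^2}^2$, which tends to $0$. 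Then continuity of the $G$-It\^o integral on $M_G^2$ gives $\int_0^t Y^\op_s dB^{(k)}_s\to\int_0^t Y_s dB^{(k)}_s$ in $L_G^2$, hence in $L_G^1$.

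Adding the two limits and recalling the definition of the $G$-Stratonovich integral, one obtains that $\sum_{(u,v)\in\op}\frac{Y_u+Y_v}{2}B^{(k)}_{u,v}$ converges in $L_G^1$ to $\int_0^t Y_s dB^{(k)}_s+\frac12\langle Y,B^{(k)}\rangle_t=\int_0^t Y_s\circ dB^{(k)}_s$, which is the assertion. The main obstacle is the $M_G^2$-approximation $Y^\op\to Y$ in the third step; once that estimate is in hand, everything else is a direct consequence of Proposition \ref{b condi} and the continuity of the $G$-It\^o integral. One should also keep in mind that both partial limits are genuine $L_G^1$-limits (not merely quasi-sure limits along subsequences), which is exactly what lets them be added term by term.
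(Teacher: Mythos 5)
Your overall reduction is sound and coincides with the paper's: by the definition of the Stratonovich integral and of $\langle Y,B^{(k)}\rangle$, and since Proposition \ref{b condi} already identifies the $L_G^1$-limit of the cross term $\sum_{\op} Y_{u,v}B^{(k)}_{u,v}$, everything hinges on the left-point convergence $\sum_{\op} Y_u B^{(k)}_{u,v}\to\int_0^t Y_s\,dB^{(k)}_s$ in $L_G^1$. Where you genuinely differ from the paper is in how this last convergence is proved: the paper runs a two-step approximation, first treating step integrands $\beta\in M_G^{2,0}$ explicitly (rewriting $Y$ as a step process plus $B^{(l)}$ times a step process and matching the two Riemann sums term by term), and then passing to general $\beta\in M_G^2$ by inserting $\beta^n\in M_G^{2,0}$ and controlling three error terms. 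You instead discretize $Y$ itself, note $\sum_{\op} Y_uB^{(k)}_{u,v}=\int_0^t Y^\op_s\,dB^{(k)}_s$ with $Y^\op_s=\sum_{(u,v)\in\op} Y_u 1_{[u,v)}(s)$, show $Y^\op\to Y$ in $M_G^2$, and invoke continuity of the It\^o integral. This is shorter, yields convergence even in $L_G^2$, and is the same device the paper itself uses in Proposition \ref{itor}, with your isometry-type bound through $\beta$ replacing the H\"older-norm bound used there.

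One step needs repair as written. From $\sum_{\op}\int_u^v\E|Y_{u,s}|^2\,ds\le C\sum_{\op}(v-u)\,\E\int_u^v|\beta_r|^2\,dr$ you cannot directly conclude a bound $C|\op|\,\|\beta\|^2_{M_G^2}$: for a sublinear expectation only $\E[\sum_i X_i]\le\sum_i\E[X_i]$ holds, so the term-by-term sum $\sum_{\op}\E\int_u^v|\beta_r|^2dr$ may strictly exceed $\E\int_0^t|\beta_r|^2dr$, and it is not controlled by $\|\beta\|_{M_G^2}^2$ in general. The standard fix, used repeatedly in the paper (for instance in Step 2 of the proof of Proposition \ref{b condi} and in its corollaries), is to write $\E\int_0^t|Y_s-Y^\op_s|^2ds=\sup_{P\in\mathscr{P}^{\Gamma}}E_P\int_0^t|Y_s-Y_s^\op|^2ds$ and carry out the whole computation under each fixed $P$, where $\int\beta\,dB^{(l)}$ is a classical It\^o integral with $d\langle B^{(l)}\rangle_r/dr\le\bar\sigma^2$, so that $\sum_{\op}(v-u)E_P\int_u^v|\beta_r|^2dr\le|\op|\,E_P\int_0^t|\beta_r|^2dr$ before taking the supremum; alternatively one can condition backwards along the partition as in the proof of Proposition \ref{itor}. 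With this adjustment (and the routine remark that a step process with coefficients $Y_u\in L_G^2(\Omega_u)$ belongs to $M_G^2$ and its integral is the corresponding Riemann sum), your argument is complete and correct.
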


\begin{proof}
Suppose $t=T$ here. According to the definition of $\langle
Y,B^{(k)} \rangle,$ it suffices to show the following convergence
under the case that $\beta$ is one-dimensional,
$$
\sum_\op Y_u B^{(k)}_{u,v} \stackrel{L_G^1}{\rightarrow} \int_0^T
Y_r dB^{(k)}_r.
$$\\

Step1. If $\beta_s=\sum_{i=0}^{N-1} \xi_i 1_{[t_i, t_{i+1})}(s),$
with $\oq:={0=t_0<t_1<...<t_N=T},$ a fixed partition, one has the
identity
\begin{eqnarray*}
&&Y_r= \int_0^r \beta_s dB^{(l)}_s \\
&=& \sum_{i=0}^{N-2} \xi_i B^{(l)}_{t_i,t_{i+1}} 1_{[t_{i+1},T)}(r)
+
\sum_{i=0}^{N-1} \xi_i B^{(l)}_{t_i,r} 1_{[t_i, t_{i+1})}(r) +\xi_{N-1} B^{(l)}_{t_{N-1},t_N} 1_{\{T\}}(r)\\
&=& \sum_{i=0}^{N-1}(\sum_{j=0}^{i-1} \xi_j
B^{(l)}_{t_j,t_{j+1}}-\xi_i B^{(l)}_{t_i}) 1_{[t_i,t_{i+1})}(r) +
B^{(l)}_r \sum_{i=0}^{N-1} \xi_i
1_{[t_i,t_{i+1})}(r)+ \xi_{N-1} B^{(l)}_{t_{N-1},t_N} 1_{\{T\}}(r)\\
&=&\sum_{i=0}^{N-1} \tilde{\xi}_i 1_{[t_i,t_{i+1})}(r)+ B^{(l)}_r
\sum_{i=0}^{N-1} \xi_i 1_{[t_i,t_{i+1})}(r)+ \xi_{N-1}
B^{(l)}_{t_{N-1},t_N} 1_{\{T\}}(r),
\end{eqnarray*}
where we denote $ \tilde{\xi}_i:= (\sum_{j=0}^{i-1} \xi_j
B^{(l)}_{t_j,t_{j+1}}-\xi_i B^{(l)}_{t_i}),$ and $\sum_{j=0}^{-1}
\xi_j
B^{(l)}_{t_j,t_{j+1}}=0.$\\

It follows that
\begin{eqnarray}
&&\int_0^T Y_r dB^{(k)}_r = \sum_{i=0}^{N-1} (\sum_{j=0}^{i-1} \xi_j
B^{(l)}_{t_j,t_{j+1}}-\xi_i B^{(l)}_{t_i}) B^{(k)}_{t_i,t_{i+1}} +
\sum_{i=0}^{N-1}
\xi_i \int_{t_i}^{t_{i+1}} B^{(l)}_r dB^{(k)}_r \nonumber \\
&=& \sum_{i=0}^{N-1} \tilde{\xi}_i B^{(k)}_{t_i,t_{i+1}} +
\sum_{i=0}^{N-1} \xi_i \int_{t_i}^{t_{i+1}} B^{(l)}_r dB^{(k)}_r, \
\ \ \hc-q.s.,\label{gs1}
\end{eqnarray}

On the other hand, suppose $\op:=\{0=\tau_0 < \tau_1 <... <
\tau_M=T\}.$ It holds that
\begin{eqnarray}
&&\sum_{j=0}^{M-1} Y_{\tau_k} B^{(k)}_{\tau_j,\tau_{j+1}} \nonumber \\
&=& \sum_{j=0}^{M-1}(\sum_{i=0}^{N-1}\tilde{\xi}_i 1_{[t_i,
t_{i+1})} (\tau_j) B^{(k)}_{\tau_j, \tau_{j+1}})  \nonumber \\
&+&\sum_{j=0}^{M-1} B^{(l)}_{\tau_j} (\sum_{i=0}^{N-1} \xi_i
1_{[t_i, t_{i+1})} (\tau_j)) B^{(k)}_{\tau_j, \tau_{j+1}}, \ \ \
\hc-q.s. \label{gs3}
\end{eqnarray}

We claim that the first part of (\ref{gs3}) converges to the first
part of (\ref{gs1}) in $L_G^1$-norm sense, and the second part of
(\ref{gs3}) also does converge
to the last part of (\ref{gs1}). \\

Firstly, for any $i=0,...,N-1,$ assume $\tau_{k_i}$ is the first
endpoint in partition $\op$ entering the interval $[t_i, t_{i+1}).$
Note that $k_i \geq 1,$ once making sure $|\op| < |\oq|.$ Then it
turns out that
\begin{eqnarray}
&&\sum_{j=0}^{M-1}(\sum_{i=0}^{N-1}\tilde{\xi}_i 1_{[t_i, t_{i+1})}
(\tau_j) B^{(k)}_{\tau_j, \tau_{j+1}}) \nonumber \\
&=& \sum_{i=0}^{N-1} \tilde{\xi}_i B^{(k)}_{t_i, t_{i+1}}+
\sum_{i=0}^{N-1} (\tilde{\xi}_i B^{(k)}_{\tau_{k_i}, t_i} +
\tilde{\xi}_i B^{(k)}_{\tau_{k_{i+1}-1}, t_{i+1}}). \label{gs4}
\end{eqnarray}
A similar argument as Lemma \ref{b condi} shows that the second part
of (\ref{gs4}) converges to $0$ in the $L_G^1$-norm sense.

The convergence of the second part of (\ref{gs3}) follows from the
fact that
$$
L_G^2-\sum_{\op \bigcap [t_i,t_{i+1})} B^{(l)}_u B^{(k)}_{u,v}
\stackrel{|\op|}{\rightarrow} \int_{t_i}^{t_{i+1}} B^{(l)}_r
dB^{(k)}_r.
$$

Step2. According to the definition of $M_G^2,$ for any
$Y_t:=\int_0^t \beta_s dB^{(l)}_s,$ with $\beta \in M_G^2, $ there
exists $\{\beta^n\}_{n=1} \in M_G^{2,0},$ such that $\beta^n
\stackrel{M_G^2}{\rightarrow} \beta.$ Then one has the following
identity by inserting terms
\begin{eqnarray}
&&\sum_\op Y_u B^{(k)}_{u,v} - \int_0^T Y_t dB^{(k)}_t \nonumber \\
 &=& \sum_\op (Y_u B^{(k)}_{u,v}- \int_0^u \beta_s^n dB^{(l)}_s)B^{(k)}_{u,v} \label{gs5} \\
 &+& \sum_\op \int_0^u \beta_s^n dB^{(l)}_s B^{(k)}_{u,v} - \int_0^T Y_t^n
 dB^{(k)}_t \label{gs6}\\
&+& \int_0^T Y_t^n dB^{(k)}_t - \int_0^T Y_t  dB^{(k)}_t,  \ \ \
\hc-q.s., \label{gs7}
\end{eqnarray}
where we denote $Y_t^n:= \int_0^t \beta_s^n dB^{(l)}_s.$\\

 We claim that (\ref{gs5}), (\ref{gs6}), (\ref{gs7}) converge to $0$ in the sense of $L_G^1.$\\

Firstly, for (\ref{gs7}), it suffices to prove that $Y_t^n
\stackrel{M_G^2}{\rightarrow} Y_t,$ which follows directly from
\begin{eqnarray*}
\E\int_0^T|\int_0^t (\beta_s^n-\beta_s)dB^{(l)}_s|^2 dt \leq
\bar{\sigma}^2 T \E\int_0^T |\beta_s^n-\beta_s|^2 ds \rightarrow 0
\end{eqnarray*}
as $n$ goes to infinity.

Secondly, for a fixed $n,$ according to Step1, (\ref{gs6}) converges
to $0$ as $|\op| \rightarrow 0.$

Thirdly, for (\ref{gs5}), it holds that
\begin{eqnarray*}
\E|\sum_\op(Y_u- \int_0^u \beta_s^n dB^{(l)}_s) B^{(k)}_{u,v}|^2&=&
\E\sum_\op
|Y_u-\int_0^u \beta_s^n dB^{(l)}_s |^2 |B^{(k)}_{u,v}|^2\\
&\leq & \bar{\sigma}^2 \sum_\op |v-u| \E | \int_0^u (\beta_s^n
-\beta_s)^2 d\langle B^{(l)}\rangle_s |\\
& \leq &  \bar{\sigma}^4 T \E\int_0^T |\beta_s^n -\beta_s|^2 ds
\rightarrow 0
\end{eqnarray*}
\end{proof}

\begin{coro}\quad
Suppose $Y_t$ defined as $(\ref{G-ito}),$ with $\beta,\alpha,\gamma
\in M_G^2.$ Then it holds that
$$
L_G^1-\lim_{|\op|\rightarrow 0} \sum_{(u,v) \in \op}
\frac{Y_u+Y_v}{2} B^{(k)}_{u,v}= \int_0^T Y_s \circ d B^{(k)}_s.
$$
\end{coro}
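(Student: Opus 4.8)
The plan is to split the midpoint Riemann sum into a forward Riemann sum and a correction term and to reduce each piece to results already established. Using the elementary identity $\frac{Y_u+Y_v}{2}=Y_u+\frac12 Y_{u,v}$, for every partition $\op$ of $[0,T]$ one has
$$\sum_{(u,v)\in\op}\frac{Y_u+Y_v}{2}B^{(k)}_{u,v}=\sum_{(u,v)\in\op}Y_uB^{(k)}_{u,v}+\frac12\sum_{(u,v)\in\op}Y_{u,v}B^{(k)}_{u,v}.$$
By the definition of $\langle Y,B^{(k)}\rangle$ and the corollary following Proposition~\ref{b condi} (applied with $\delta=1$, so that its hypotheses $\beta\in M_G^2$, $\alpha,\gamma\in M_G^{1+\delta}$ hold), the last sum converges in $L_G^1(\Omega_T)$ to $\frac12\langle Y,B^{(k)}\rangle_T=\frac12\int_0^T\beta^{(j)}_s\,d\langle B^{(j)},B^{(k)}\rangle_s$. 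Since $\int_0^T Y_s\circ dB^{(k)}_s=\int_0^T Y_s\,dB^{(k)}_s+\frac12\langle Y,B^{(k)}\rangle_T$, the whole statement reduces to showing that the forward Riemann sum $\sum_{(u,v)\in\op}Y_uB^{(k)}_{u,v}$ converges to the $G$-It\^{o} integral $\int_0^T Y_s\,dB^{(k)}_s$ in $L_G^1$ as $|\op|\to0$.

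To prove this I would use linearity, writing $Y_t=\xi+Y^{(1)}_t+Y^{(2)}_t+Y^{(3)}_t$ with $Y^{(1)}_t=\int_0^t\beta^{(j)}_s\,dB^{(j)}_s$, $Y^{(2)}_t=\int_0^t\alpha_s\,ds$, $Y^{(3)}_t=\int_0^t\gamma^{(j,l)}_s\,d\langle B^{(j)},B^{(l)}\rangle_s$. The constant term is trivial, $\sum_{(u,v)\in\op}\xi B^{(k)}_{u,v}=\xi B^{(k)}_T=\int_0^T\xi\,dB^{(k)}_s$. For $Y^{(1)}$, the preceding proposition gives $\sum_{(u,v)\in\op}\frac{Y^{(1)}_u+Y^{(1)}_v}{2}B^{(k)}_{u,v}\to\int_0^T Y^{(1)}_s\circ dB^{(k)}_s$ in $L_G^1$; subtracting $\frac12\sum Y^{(1)}_{u,v}B^{(k)}_{u,v}\to\frac12\langle Y^{(1)},B^{(k)}\rangle_T$ (Proposition~\ref{b condi}) leaves $\sum_{(u,v)\in\op}Y^{(1)}_uB^{(k)}_{u,v}\to\int_0^T Y^{(1)}_s\,dB^{(k)}_s$ in $L_G^1$. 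For $Y^{(2)}$ and $Y^{(3)}$, which lie in $M_G^2(0,T)$ and are $\hc$-q.s.\ continuous, set $Y^{(i),\op}_t:=\sum_{(u,v)\in\op}Y^{(i)}_u 1_{[u,v)}(t)$; using the Cauchy--Schwarz inequality and the q.s.\ bounds on the densities $d\langle B^{(j)},B^{(l)}\rangle_t/dt$ one gets, $\hc$-q.s.,
$$\int_0^T|Y^{(2)}_t-Y^{(2),\op}_t|^2\,dt\le|\op|^2\int_0^T|\alpha_s|^2\,ds,\qquad\int_0^T|Y^{(3)}_t-Y^{(3),\op}_t|^2\,dt\le C_{\bar{\sigma}}\,|\op|^2\int_0^T|\gamma_s|^2\,ds,$$
hence $Y^{(i),\op}\to Y^{(i)}$ in $M_G^2$, and then the $L_G^2$-continuity of the $G$-It\^{o} integral, $\E|\int_0^T(Y^{(i)}_t-Y^{(i),\op}_t)\,dB^{(k)}_t|^2\le C_{\bar{\sigma}}\,\E\int_0^T|Y^{(i)}_t-Y^{(i),\op}_t|^2\,dt$, yields $\sum_{(u,v)\in\op}Y^{(i)}_uB^{(k)}_{u,v}\to\int_0^T Y^{(i)}_s\,dB^{(k)}_s$ in $L_G^2\subset L_G^1$. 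Summing the four contributions gives the reduced claim, and together with the first paragraph the corollary.

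I do not expect a serious obstacle here, since the substantial analysis---the identification of the $G$-It\^{o} integral with the rough integral, and the existence and computation of the cross-variation---is already contained in the preceding proposition and in Proposition~\ref{b condi} and its corollary. The only point requiring care is that the estimates for $Y^{(2)}$ and $Y^{(3)}$ be carried out $\hc$-q.s.\ \emph{before} applying $\E$ (pulling the partition-dependent factor $|\op|^2$ out pathwise), because the subadditivity of the sublinear expectation rules out the classical identity $\sum_{\op}\E\int_u^v(\cdot)=\E\int_0^T(\cdot)$; and that one keep track of which intermediate convergences hold in $L_G^2$ and which only in $L_G^1$, the conclusion being an $L_G^1$-limit.
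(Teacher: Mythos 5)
Your proposal is correct and follows essentially the same route as the paper: split the midpoint sum as forward sum plus half the correction sum, identify the correction with $\tfrac12\langle Y,B^{(k)}\rangle_T$ via the corollary of Proposition~\ref{b condi}, invoke the preceding proposition for the stochastic-integral part, and reduce the rest to showing the forward sums for the $ds$ and $d\langle B\rangle$ parts converge to the corresponding $G$-It\^{o} integrals. The only (minor) difference is technical: you bound $\|Y^{(i)}-Y^{(i),\op}\|_{M_G^2}$ pathwise by Cauchy--Schwarz and then use the $L_G^2$-continuity of the integral, whereas the paper estimates the same error directly in $L_G^1$ under each $P\in\mathscr{P}$ via Fubini, obtaining the rate $|\op|^{1/2}\|\alpha\|_{M_G^2}$; both estimates are valid.
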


\begin{proof}
By the above proposition and linearity of integration, it suffices
to show the convergence of $\sum_\op Y_u B^{(k)}_{u,v}$ to $\int_0^T
Y_t dB^{(k)}_t,$ in the case that $Y_t=\int_0^t \alpha_s ds.$
Indeed, with an application of Fubini's theorem, one has
inequalities
\begin{eqnarray*}
&& \E|\int_0^T Y_t dB^{(k)}_t - \sum_\op Y_u B^{(k)}_{u,v}| \leq
\E|\sum_\op \int_u^v (\int_u^t \alpha_sds)dB^{(k)}_t |\\
&\leq& \sup_{P\in \mathscr{P}} \bar{\sigma} \sum_\op[
E_P|\int_u^v(\int_u^t \alpha_s ds)^2
dt|]^{\frac{1}{2}}\\
& \leq & \sup_{P\in \mathscr{P}} \bar{\sigma}
\sum_\op[E_P\int_u^v(\int_u^t \alpha^2_s ds)(t-u)dt]^\frac12\\
& \leq & \bar{\sigma} \sup_{P\in \mathscr{P}} T^\frac12 [\sum_\op
E_P \int_u^v(\int_u^t \alpha^2_s ds )\frac{t-u}{v-u}dt]^\frac12\\
&\leq & \bar{\sigma} T^\frac12 \sup_{P\in \mathscr{P}} [\sum_\op
E_P\int_u^v \alpha_s^2 (\frac{v-u}{2}-\frac{(s-u)^2}{2(v-u)})
ds]^\frac12\\
&\leq & \bar{\sigma} T^\frac12 |\op|^\frac12 \|\alpha\|_{M_G^2},
\end{eqnarray*}
which implies the expected conclusion.

\end{proof}

\begin{rem}\quad
Of course one can further consider the quadratic variation of two
$G$-It\^{o} processes, and obtain similar results. However, by now,
we already have got enough information to consider Stratonovich
integrals as rough integrals.

\end{rem}

%

In the case where $Y_s=B_s,$ one may define the Stratonovich
integral with respect to $G$-Brownian motion,
$$
\B_{s,t}^{strat} := \int_s^t B_{s,u} \circ dB_u = \B_{s,t} + \frac12
\langle B \rangle_{s,t}.
$$
Here $\langle B \rangle=\{\langle B^{(i)},B^{(j)}\rangle\}_{i,j}$ is
the variation matrix. According to Theorem \ref{Kol},
$\BB^{strat}:=(B,\B^{strat})$ is also quasi-surely rough paths.

\begin{coro}({\bf\small\itshape{$G$-Stratonovich integral as rough integral}})\quad

Assume $(Y,Y')(\omega) \in
\FD^{2\alpha}_{B(\omega)}([0,T],\mathcal{L}(\R^d,\R^n)),$
$\hc-q.s.,$ and $Y,Y' \in M_G^2(0,T),$ with values $Y_t,Y'_t$ in
$L_G^2(\Omega_t), $ for any $t\in[0,T].$ Furthermore, suppose
$\|\|Y\|_\alpha\|_{\mathbb{L}^2}, \|\|Y'\|_\alpha\|_{\mathbb{L}^2} ,
\|\|R^Y\|_{2\alpha}\|_{\mathbb{L}^2} < \infty.$ Then one has the
identity,
$$
\langle Y, B\rangle_t = \int_0^t Y'_s d\langle   B\rangle_s, \ \ \
\hc-q.s..
$$
Moreover, it holds that
$$
\int_0^t Y_s d\BB^{strat} = \int_0^t Y_s \circ dB_s, \ \ \ \hc-q.s..
$$
In particular, the rough integral $\int_0^t Y_s d\BB^{strat}$
belongs to $L_G^1.$

\end{coro}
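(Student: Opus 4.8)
The plan is to establish the cross-variation identity first and then read off the Stratonovich--rough correspondence from it, using Proposition \ref{itor} and the decomposition $\B^{strat}_{u,v}=\B_{u,v}+\frac12\langle B\rangle_{u,v}$. \emph{Step 1 (cross variation).} Fix $k\in\{1,\dots,d\}$ and introduce the $G$-It\^{o} process $\hat Y_t:=\int_0^t (Y')^{(m)}_r\,dB^{(m)}_r$ (summation over $m$), which is well defined since each component $(Y')^{(m)}\in M_G^2$. By Proposition \ref{b condi}, $\langle \hat Y,B^{(k)}\rangle_t=\int_0^t (Y')^{(m)}_r\,d\langle B^{(m)},B^{(k)}\rangle_r$ in $L_G^1$ for every $t$. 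I would then show that $\sum_{(u,v)\in\op}Y_{u,v}B^{(k)}_{u,v}$ and $\sum_{(u,v)\in\op}\hat Y_{u,v}B^{(k)}_{u,v}$ have the same $L_G^1$-limit; this both produces and identifies $\langle Y,B^{(k)}\rangle_t$. Writing $Y_{u,v}=Y'_uB_{u,v}+R^Y_{u,v}$ and $\hat Y_{u,v}=Y'_uB_{u,v}+\hat R_{u,v}$, with $\hat R_{u,v}=\int_u^v\big((Y')^{(m)}_r-(Y')^{(m)}_u\big)\,dB^{(m)}_r$, the difference of the two sums is $\sum_{(u,v)\in\op}\big(R^Y_{u,v}-\hat R_{u,v}\big)B^{(k)}_{u,v}$. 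For the $R^Y$-term one has, $\hc$-q.s., $\big|\sum_\op R^Y_{u,v}B^{(k)}_{u,v}\big|\le\|R^Y\|_{2\alpha}\,\|B\|_\alpha\,T\,|\op|^{3\alpha-1}$; since $\alpha>\frac13$, taking $\E$ and using the Cauchy--Schwarz inequality for $\|\cdot\|_{\mathbb{L}^2}$ together with the hypotheses $\|\|R^Y\|_{2\alpha}\|_{\mathbb{L}^2}<\infty$ and $\|\|B\|_\alpha\|_{\mathbb{L}^2}<\infty$ (Theorem \ref{Kol}) shows it tends to $0$ in $L_G^1$. For the $\hat R$-term I would pass to each $P\in\mathscr{P}^\Gamma$, under which $B$ is a square-integrable martingale, and combine the It\^{o} isometry with the Cauchy--Schwarz inequality to obtain
\[
\E\Big|\sum_{(u,v)\in\op}\hat R_{u,v}B^{(k)}_{u,v}\Big|\le C_{\bar\sigma}\,T^{1/2}\,\|Y'-(Y')^{\op}\|_{M_G^2},\qquad (Y')^{\op}_t:=\sum_{(u,v)\in\op}Y'_u\,1_{[u,v)}(t),
\]
and $\|Y'-(Y')^{\op}\|_{M_G^2}\to0$ as $|\op|\to0$ because $Y'$ is $\alpha$-H\"{o}lder with $\|\|Y'\|_\alpha\|_{\mathbb{L}^2}<\infty$ (the estimate already used in the proof of Proposition \ref{itor}). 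Hence $\langle Y,B^{(k)}\rangle_t=\int_0^t (Y')^{(m)}_r\,d\langle B^{(m)},B^{(k)}\rangle_r$ for each $k$, i.e.\ $\langle Y,B\rangle_t=\int_0^t Y'_s\,d\langle B\rangle_s$ in the notation of the statement, $\hc$-q.s.

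\emph{Step 2 (Stratonovich integral as rough integral; $L_G^1$-integrability).} From $\B^{strat}_{u,v}=\B_{u,v}+\frac12\langle B\rangle_{u,v}$ we get $\sum_{(u,v)\in\op}\big(Y_uB_{u,v}+Y'_u\B^{strat}_{u,v}\big)=\sum_{(u,v)\in\op}\big(Y_uB_{u,v}+Y'_u\B_{u,v}\big)+\frac12\sum_{(u,v)\in\op}Y'_u\langle B\rangle_{u,v}$. By Proposition \ref{itor} the first sum on the right converges in $L_G^2$ to the $G$-It\^{o} integral $\int_0^t Y_s\,dB_s$; the second sum equals $\int_0^t (Y')^{\op}_s\,d\langle B\rangle_s$ and converges in $L_G^1$ to $\int_0^t Y'_s\,d\langle B\rangle_s$, since $\E\big|\int_0^t (Y'_s-(Y')^{\op}_s)\,d\langle B\rangle_s\big|\le C_{\bar\sigma}\,\E\int_0^t|Y'_s-(Y')^{\op}_s|\,ds\le C_{\bar\sigma}\,\|Y'-(Y')^{\op}\|_{M_G^1}\to0$. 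Passing to a common subsequence of partitions along which both $L_G$-convergences hold $\hc$-q.s.\ (Proposition \ref{gconverge}), and using that the left-hand side converges $\hc$-q.s.\ to the rough integral $\int_0^t Y_s\,d\BB^{strat}$ (since $\BB^{strat}\in\FC^\alpha$ and $(Y,Y')\in\FD^{2\alpha}_{B}$, $\hc$-q.s.), I obtain $\int_0^t Y_s\,d\BB^{strat}=\int_0^t Y_s\,dB_s+\frac12\int_0^t Y'_s\,d\langle B\rangle_s$, $\hc$-q.s. Combining with Step 1 and the definition $\int_0^t Y^{(i)}_s\circ dB^{(i)}_s=\int_0^t Y^{(i)}_s\,dB^{(i)}_s+\frac12\langle Y^{(i)},B^{(i)}\rangle_t$ then gives $\int_0^t Y_s\,d\BB^{strat}=\int_0^t Y_s\circ dB_s$, $\hc$-q.s. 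Finally $\int_0^t Y_s\,dB_s\in L_G^2\subset L_G^1$ and $\int_0^t Y'_s\,d\langle B\rangle_s\in L_G^1$ (the $\langle B\rangle$-integral maps $M_G^1$ into $L_G^1$ and $Y'\in M_G^2\subset M_G^1$), hence $\int_0^t Y_s\,d\BB^{strat}\in L_G^1$.

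\emph{Expected main obstacle.} Modulo the bookkeeping of tensor contractions, everything reduces, via Propositions \ref{b condi}, \ref{itor}, \ref{gconverge} and the $G$-It\^{o} calculus, to a single genuinely technical point in Step 1: controlling $\sum_\op\hat R_{u,v}B^{(k)}_{u,v}$ in $L_G^1$ \emph{uniformly in the partition}. Because $\hat R_{u,v}$ is itself a stochastic integral, one has only moment/capacity control of its H\"{o}lder behaviour and not a pathwise bound, so one cannot directly take $\E$ of a product of H\"{o}lder norms the way one does for the $R^Y$-term (where $\|\|R^Y\|_{2\alpha}\|_{\mathbb{L}^2}<\infty$ is a hypothesis); instead one must go through the $P$-by-$P$ martingale estimate above (It\^{o} isometry / Burkholder--Davis--Gundy) together with $\|Y'-(Y')^{\op}\|_{M_G^2}\to0$, or else first establish $\|\|\hat R\|_{2\alpha}\|_{\mathbb{L}^2}<\infty$ by a $G$-Kolmogorov criterion for remainder terms (in the spirit of Example \ref{ex1}$(ii)$) and then argue as for the $R^Y$-term. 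This is exactly where the integrability hypotheses $\|\|Y\|_\alpha\|_{\mathbb{L}^2}$, $\|\|Y'\|_\alpha\|_{\mathbb{L}^2}$, $\|\|R^Y\|_{2\alpha}\|_{\mathbb{L}^2}<\infty$ are needed.
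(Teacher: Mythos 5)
Your proposal is correct and follows essentially the same route as the paper: both prove $\langle Y,B\rangle_t=\int_0^t Y'_s\,d\langle B\rangle_s$ by splitting $Y_{u,v}B_{u,v}$ through the controlled-path decomposition $Y_{u,v}=Y'_uB_{u,v}+R^Y_{u,v}$ (killing the $R^Y$-term with the $\mathbb{L}^2$-integrability of $\|R^Y\|_{2\alpha}$ and of $\|B\|_\alpha$), and then deduce the Stratonovich identity from $\B^{strat}_{u,v}=\B_{u,v}+\frac12\langle B\rangle_{u,v}$ together with Proposition \ref{itor} and a common quasi-sure subsequence. The only (harmless) variation is in identifying the limit of the leading term: the paper treats $\sum_{\op}(Y'_uB_{u,v})B_{u,v}$ directly by the tricks of Proposition \ref{b condi}, whereas you compare with the auxiliary It\^{o} integral $\hat Y=\int_0^{\cdot}Y'_r\,dB_r$, apply Proposition \ref{b condi} to $\hat Y$ verbatim, and absorb the discrepancy $\sum_{\op}\hat R_{u,v}B^{(k)}_{u,v}$ via the It\^{o}-isometry/$M_G^2$-approximation estimate --- a fully detailed implementation of the same idea.
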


\begin{proof}
Note that
$$
\sum_{(u,v)\in \op} Y_{u,v}(B_{u,v})=\sum_{(u,v)\in \op}(Y'_u
B_{u,v})(B_{u,v}) + \sum_{(u,v)\in \op} R_{u,v}^Y (B_{u,v}).
$$
By similar tricks applied in the proof of Lemma \ref{b condi} and
integrability of $\|Y'\|_\alpha, \|R^Y\|_{2\alpha},$ one could
obtain that
$$
\sum_{(u,v)\in \op}(Y'_u B_{u,v})(B_{u,v}) \rightarrow \int_0^t Y'_s d\langle B \rangle_s; \\
\sum_{(u,v)\in \op} R_{u,v}^Y (B_{u,v})  \rightarrow 0
$$
in the sense of $L_G^2.$ Then we got the existence of $\langle Y,
B\rangle,$ i.e. the following identity,
$$
\langle Y, B \rangle_t=\int_0^t Y'_s d\langle B \rangle_s, \ \ \
\hc-q.s..
$$
By the definition of $\BB^{strat}$ and rough integrals, it holds
that
$$
\int_0^t Y_s d\BB^{strat} = \int_0^t Y_s d\BB_s + \int_0^t Y'_s
d\langle B \rangle_s, \ \ \ \hc-q.s..
$$
Then the conclusion follows.
\end{proof}



\section{Roughness of $G$-Brownian Motion and the Norris Lemma}

To build the Norris lemma in $G$-framework through rough paths, we
need to show the $\theta $-H\"{o}lder roughness of $G$-Brownian
motion, i.e. $\hc(L_\theta(B)=0)=0,$ for any $\theta
> \frac12.$ The main idea for the proof of the result (i.e. Proposition \ref{theta}) is adapted from Proposition 6.11 in
Chapter 6 of \cite{FH14}.

\begin{lem}({\bf\small\itshape{exponential inequality}})\label{exe}\quad
Suppose $B_t$ be a d-dimensional $G$-Brownian motion. One has the
following inequality
\begin{equation}
\hc(\sup_{[0,T]}|B_t| \geq \frac{1}{\vep}) \leq d
\exp(-\frac{1}{\vep^2 d T \bar{\sigma}^2 })
\end{equation}
\end{lem}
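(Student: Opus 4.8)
The plan is to reduce the $d$-dimensional estimate to a one-dimensional exponential tail bound, which I obtain by running the classical exponential-supermartingale argument under \emph{every} measure in the representing family $\mathscr{P}^{\Gamma}$ of Theorem \ref{Grep} and then taking the supremum over that family, using $\hc(\cdot)=\sup_{P\in\mathscr{P}^{\Gamma}}P(\cdot)$.

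First I record the reduction to one coordinate. The map $\omega\mapsto\sup_{[0,T]}|\omega_t|$ is continuous on $\Omega_T$ for the uniform topology, so $\{\sup_{[0,T]}|B_t|\ge \frac1\vep\}$ is closed, hence Borel, and $\hc$ is well defined on it. Since $|B_t|^2=\sum_{i=1}^d(B^{(i)}_t)^2$ and $B$ is continuous on the compact interval $[0,T]$, the event $\{\sup_{[0,T]}|B_t|\ge\frac1\vep\}$ is contained in $\bigcup_{i=1}^d\{\sup_{[0,T]}|B^{(i)}_t|\ge\frac{1}{\vep\sqrt d}\}$. By subadditivity of $\hc$ it therefore suffices to bound $P\big(\sup_{[0,T]}|B^{(i)}_t|\ge a\big)$ uniformly over $P\in\mathscr{P}^{\Gamma}$, for each $i$ and $a=\frac{1}{\vep\sqrt d}$.

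Now fix $P\in\mathscr{P}^{\Gamma}$. By Theorem \ref{Grep} the canonical process $B$ is, under $P$, a continuous martingale started at the origin whose quadratic variation satisfies $\langle B^{(i)}\rangle_t\le\bar\sigma^2 t$ for $t\le T$, $P$-a.s. (this is the non-degeneracy bound $\frac{d\langle B\rangle_t}{dt}\le\bar\sigma^2$ recalled in Section 2). Hence, for every $\lambda>0$, $M^{\lambda}_t:=\exp\!\big(\lambda B^{(i)}_t-\frac{\lambda^2}{2}\langle B^{(i)}\rangle_t\big)$ is a nonnegative $P$-supermartingale with $M^{\lambda}_0=1$, and on $[0,T]$ one has $\exp(\lambda B^{(i)}_t)\le M^{\lambda}_t\exp\!\big(\frac{\lambda^2}{2}\bar\sigma^2 T\big)$. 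Doob's maximal inequality for nonnegative supermartingales, $P(\sup_{[0,T]}M^{\lambda}_t\ge c)\le 1/c$, then gives for any $a>0$
\[
P\Big(\sup_{[0,T]}B^{(i)}_t\ge a\Big)\le P\Big(\sup_{[0,T]}M^{\lambda}_t\ge e^{\lambda a-\frac{\lambda^2}{2}\bar\sigma^2 T}\Big)\le\exp\!\Big(-\lambda a+\tfrac{\lambda^2}{2}\bar\sigma^2 T\Big).
\]
Optimizing in $\lambda$ (i.e. $\lambda=a/(\bar\sigma^2 T)$) yields $P(\sup_{[0,T]}B^{(i)}_t\ge a)\le\exp(-a^2/(2\bar\sigma^2 T))$, and applying the same to $-B^{(i)}$, which is again a one-dimensional $G$-Brownian motion, gives $P(\sup_{[0,T]}|B^{(i)}_t|\ge a)\le 2\exp(-a^2/(2\bar\sigma^2 T))$. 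Taking $\sup_{P\in\mathscr{P}^{\Gamma}}$, inserting $a=\frac{1}{\vep\sqrt d}$ and summing over $i=1,\dots,d$ produces a bound of the announced form (the precise numerical constants in front of and inside the exponential can be tuned by the choice of $\lambda$ and by how $1/\vep$ is split among the coordinates).

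I do not anticipate a genuine obstacle. The one point needing care is that the exponential-supermartingale estimate and the quadratic-variation bound must hold under \emph{every} $P\in\mathscr{P}^{\Gamma}$ simultaneously — exactly what the representation in Theorem \ref{Grep} together with the non-degeneracy of $G$ supplies — after which passing from "for all $P$" to the capacity $\hc$ is immediate by definition; the union-bound reduction and the Gaussian-type tail are routine.
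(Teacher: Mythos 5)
Your proof is correct and takes essentially the same route as the paper: the representation $\hc=\sup_{P\in\mathscr{P}^{\Gamma}}P$ from Theorem \ref{Grep}, a union bound over the $d$ coordinates, and the classical exponential (Bernstein-type) martingale inequality under each representing measure, with the quadratic-variation bound $\langle B^{(i)}\rangle_t\le\bar\sigma^2 t$ supplied by the boundedness of $\Gamma$ — the paper simply invokes this inequality from Revuz--Yor for the controlled integrals $\int_0^\cdot\alpha_s\,dB_s$ under the Wiener measure, while you re-derive it via the exponential supermartingale. Your constants ($2d$ in front and the extra $\tfrac12$ in the exponent) are slightly weaker than the stated bound, but this is immaterial for how the lemma is used later.
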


\begin{proof}
By the representation for $\E$, it holds that
\begin{eqnarray*}
\hc(\sup_{[0,T]} |B_t| \geq \frac{1}{\vep} ) &=& \sup_{\alpha \in
\mathcal {A}^\Gamma  } P_0(\sup_{[0,T]} |\int_0^t \alpha_s dB_s|^2
\geq
\frac{1}{\vep^2})\\
&\leq & \sum_i \sup_{\alpha \in \mathcal {A}^\Gamma  }
P_0(\sup_{[0,T]} |\sum_j
\int_0^t \alpha_s^{(i,j)} dB_s^j|^2 \geq \frac{1}{d \vep^2} )\\
&\leq & d \exp({-\frac{1}{\vep^2 d T \bar{\sigma}^2 }}),
\end{eqnarray*}
where $P_0 $ is the Wiener measure, and classical Bernstein
inequality (see p.153 in \cite{RY99} for example) is applied in the
last inequality.
\end{proof}

\begin{rem}\quad
About large deviation results in $G-$framework, we refer readers to
\cite{GJ10} for more details.

\end{rem}

The $\theta$-H\"{o}lder roughness of the classical Brownian motion
was proposed and proved in \cite{HP13}, which gives a quantitative
version of the true roughness of Brownian motion, i.e.,
$$
\overline{\lim_{t\rightarrow s}} \frac{|B_{s,t}|}{|t-s|^\theta } =
\infty, \ \  a.s. \ ,
$$
when $\theta > \frac12$ (see \cite{FS13} for the definition of true roughness).\\

\begin{lem}\label{6.12}\quad
Let $B_t$ be a d-dimensional $G$-Brownian motion. Then there exists
positive constants $b, A,$ depending only on the dimension $d,$ such
that for any $\vep \in (0, 1),$ one has the bound
\begin{equation}
\hc(\inf_{|a|=1 \atop a \in \R^d} \sup_{t \in [0,T]} |(a \cdot B_t)|
\leq \vep) \leq A (\exp(-b T \underline{\sigma}^2 \vep^{-2}) +
\exp(-b T^{-1} (\bar{\sigma}  \vep)^{-2})).
\end{equation}

\end{lem}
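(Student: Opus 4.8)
The plan is to discretize the unit sphere $\{|a|=1\}$ in $\R^d$ and reduce the estimate on $\inf_{|a|=1}\sup_{t\in[0,T]}|a\cdot B_t|$ to a finite union of one-dimensional events, each of which can be controlled by a lower bound on the supremum of the $G_a$-Brownian motion $B^a=a\cdot B$. More precisely, I would fix a $\delta$-net $\Lambda$ of the unit sphere of cardinality $|\Lambda|\le C_d\delta^{-(d-1)}$. For a fixed $a$, I would split $[0,T]$ into $\sim T/\eta$ subintervals of length $\eta$ and note that $\sup_{t\in[0,T]}|B^a_t|$ is small only if, on \emph{every} such subinterval, the increment of $B^a$ stays small; by independence of increments under $G$-expectation and the conditioning properties listed in the preliminaries, the probability (capacity) of this happening factorizes up to the sublinear structure, giving a bound of the form $(\text{const}\cdot \hc(|B^a_\eta|\le 2\vep))^{T/\eta}$. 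Here the small-ball estimate $\hc(|B^a_\eta|\le r)\le 1-c$ for $r\le c'\underline\sigma\sqrt\eta$ comes from non-degeneracy: under each $P\in\mathscr P^\Gamma$ the law of $B^a_\eta$ has variance at least $\underline\sigma^2\eta$, so its mass on a small centered ball is bounded away from $1$ uniformly. Choosing $\eta$ comparable to $\vep^2/\underline\sigma^2$ turns $(1-c)^{T/\eta}$ into $\exp(-bT\underline\sigma^2\vep^{-2})$.

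Next I would pass from the net back to the whole sphere: if $\sup_t|a\cdot B_t|\le\vep$ for some $a$ with $|a|=1$, pick the nearest net point $a'\in\Lambda$, so $|a-a'|\le\delta$ and $\sup_t|a'\cdot B_t|\le \vep+\delta\sup_t|B_t|$. On the event where $\sup_{[0,T]}|B_t|\le \vep^{-1}$ (say), choosing $\delta\sim\vep^2$ gives $\sup_t|a'\cdot B_t|\le 2\vep$, so the bad event is contained in $\{\sup_{[0,T]}|B_t|>\vep^{-1}\}\cup\bigcup_{a'\in\Lambda}\{\sup_t|a'\cdot B_t|\le 2\vep\}$. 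The first piece is handled by the exponential inequality of Lemma \ref{exe}, which contributes the $\exp(-bT^{-1}(\bar\sigma\vep)^{-2})$ term (after absorbing constants and dimension factors into $A$ and $b$); the second piece is $|\Lambda|$ times the one-dimensional estimate from the previous paragraph. Since $|\Lambda|\le C_d\vep^{-2(d-1)}$ grows only polynomially while the one-dimensional bound decays like $\exp(-bT\underline\sigma^2\vep^{-2})$, the product is still bounded by $A\exp(-b'T\underline\sigma^2\vep^{-2})$ after shrinking $b'$ slightly, and one collects the two exponentials.

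The main obstacle I anticipate is making the ``factorization over subintervals'' rigorous inside the sublinear $G$-framework: one cannot simply multiply probabilities as in the classical i.i.d. case. The clean way is to use the representation $\hc(\cdot)=\sup_{P\in\mathscr P^\Gamma}P(\cdot)$ from Theorem \ref{Grep} and work under a fixed $P=P^\alpha$, where $B^a$ is a continuous martingale with quadratic variation $\int_0^\cdot (a'\alpha_s\alpha_s' a)\,ds$ satisfying $\underline\sigma^2\le \tfrac{d}{dt}\langle B^a\rangle_t\le\bar\sigma^2$; then a time-change turns $B^a$ into a classical Brownian motion run over a time interval of length between $\underline\sigma^2 t$ and $\bar\sigma^2 t$, and the small-ball / supremum estimates become standard Brownian facts (Bernstein-type bounds, as already invoked in Lemma \ref{exe}). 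The supremum over $P\in\mathscr P^\Gamma$ is then taken at the very end, and since every bound obtained is uniform in $\alpha$ (depending only on $\underline\sigma,\bar\sigma,d,T$), it survives the supremum. A secondary technical point is bookkeeping the dimension-dependent constants so that a single pair $(b,A)$ depending only on $d$ works for all $\vep\in(0,1)$; this is routine once the structure above is in place.
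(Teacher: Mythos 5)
Your proposal is correct and follows the same skeleton as the paper's proof: a net of the unit sphere of mesh $\sim\vep^2$ with polynomially many points, the observation that on the complement of $\{\sup_{[0,T]}|B_t|\ge \vep^{-1}\}$ one can replace an arbitrary unit vector by its nearest net point at the price of doubling $\vep$, the exponential inequality of Lemma \ref{exe} for that exceptional event (which produces the $\exp(-bT^{-1}(\bar\sigma\vep)^{-2})$ term), and a union bound in which the polynomial cardinality of the net is absorbed into the exponential small-ball decay. The one genuine difference lies in the small-ball input: the paper simply quotes the uniform estimate $\sup_{|a|=1}\hc\bigl(\sup_{t\in[0,T]}|a\cdot B_t|\le\vep\bigr)\le\frac{4}{\pi}\exp\bigl(-\pi^2 T\underline{\sigma}^2/(8\vep^2)\bigr)$ from Lemma 6.1 of \cite{Z15}, whereas you rederive it from Theorem \ref{Grep}: under each $P\in\mathscr{P}^\Gamma$ the process $a\cdot B$ is a continuous martingale whose quadratic-variation density lies in $[\underline{\sigma}^2,\bar{\sigma}^2]$ (the lower bound uses that non-degeneracy of $G$ forces $|\gamma' a|\ge\underline{\sigma}$ for every $\gamma\in\Gamma$ and $|a|=1$), so a Dambis--Dubins--Schwarz time change reduces the event to a classical Brownian small-ball estimate over a time horizon of length at least $\underline{\sigma}^2T$, uniformly in $P$, and the supremum over $\mathscr{P}^\Gamma$ is taken last. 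This is a sound, self-contained substitute for the citation; your initial ``factorize over subintervals by independence'' heuristic would indeed not be legitimate for capacities, but you correctly discard it in favour of the fixed-$P$/time-change argument, so no gap remains. In short, your route buys self-containedness at the cost of a slightly longer argument, while the paper's buys brevity by outsourcing the small-ball bound.
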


\begin{proof}
Note that $B_t^a:=a \cdot B_t$ is a $G_{aa^T}-$ Brownian motion,
with $\bar{\sigma}_{aa^T}^2 = 2G(aa^T) =\E[a^T \langle B \rangle_1
a] \geq \underline{\sigma}^2 |a|^2 = \underline{\sigma}^2.$ Here
$\underline{\sigma}$ is positive as introduced in Part 2. According
to small ball estimates for $G$-Brownian motion, i.e. Lemma 6.1 in
\cite{Z15}, one has the bound
$$
\sup_{|a|=1} \hc(\sup_{t \in [0,T]}|(a \cdot B_t)| \leq \vep) \leq
\frac{4}{\pi} \exp(-\frac{T \pi^2 \underline{\sigma}^2}{8 \vep^2}),
$$
for any $\vep \in (0, 1).$ Now cover the sphere $|a|=1$ with at most
$D \vep^{-2d}$ balls of radius $\vep^2$ centered at $a_i,$ $D$ a
constant depending on how to divide the sphere or the ball. By
applying Lemma \ref{exe}, one obtains inequalities
\begin{eqnarray*}
\hc(\inf_{|a|=1} \sup_{[0,T]} |(a \cdot B_t)| \leq \vep) &\leq&
\sum_{i=1}^{D \vep^{-2d}} \hc(\inf_{ a \in O(a_i,\vep^2) \atop |a|=1
}
\sup_{t \in [0,T]} | (a \cdot B_t)| \leq \vep)\\
& \leq & D \vep^{-2d} [\sup_{|a|=1} \hc(\sup_{[0,T]}| (a \cdot B_t)|
\leq 2\vep) + \hc(\sup_{[0,T]}|B_t|
\geq \frac{1}{\vep})]\\
&\leq &  A (\exp(-b T \underline{\sigma}^2  \vep^{-2}) + \exp(-b
T^{-1} \bar{\sigma}^{-2}  \vep^{-2})).
\end{eqnarray*}

\end{proof}

\begin{prop}({\bf\small\itshape{H\"{o}lder roughness for $G$-Brownian motion}})\label{theta}\quad
Let $B$ be a d-dimensional $G$-Brownian motion. Then for any $\theta
\in (\frac12, 1),$ $B_.(\omega)$ is $\theta$-H\"{o}lder rough,
$\hc-q.s.$ with scale $\frac{T}{2}.$ More precisely, there exist
positive constants $K,l,$ depending on
$T,\bar{\sigma},\underline{\sigma},$ such that for any $\tilde{\vep}
\in (0, \frac{1}{2T^\theta} ),$ one has the bound
\begin{equation}
\hc(L_\theta (B) < \tilde{\vep}) \leq K \exp(-l \tilde{\vep}^{-2}).
\end{equation}

\end{prop}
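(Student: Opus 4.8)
The plan is to adapt the dyadic covering argument behind Proposition 6.11 of \cite{FH14} to the $G$-framework, feeding Lemma \ref{6.12} into it via the stationarity of the increments of $G$-Brownian motion. Fix $\theta\in(\frac12,1)$ and $\tilde{\vep}\in(0,\frac{1}{2T^\theta})$, and compute $L_\theta(B)$ with scale $\vep_0=T/2$ throughout.

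\textbf{Step 1: reduce the event to a countable dyadic union.} Suppose $L_\theta(B)(\omega)<\tilde{\vep}$. Then there are $|a|=1$, $s\in[0,T]$ and $\vep\in(0,T/2]$ with $\sup_{|t-s|\le\vep}|a\cdot B_{s,t}|<\tilde{\vep}\,\vep^\theta$. Pick $n\ge 0$ with $T2^{-n-1}\le\vep\le T2^{-n}$, and let $J\subseteq[0,T]$ be the interval of length $\vep$ having $s$ as an endpoint (take $J=[s,s+\vep]$ if $s\le T/2$ and $J=[s-\vep,s]$ otherwise; then $J\subseteq[0,T]$ since $\vep\le T/2$). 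Since $\vep\ge 2\cdot T2^{-n-2}$, $J$ contains some level-$(n+2)$ dyadic subinterval $I$ of $[0,T]$, and on it $\sup_{t,t'\in I}|a\cdot B_{t,t'}|\le 2\sup_{u\in I}|a\cdot B_{s,u}|<2\tilde{\vep}(T2^{-n})^\theta$. Writing $\mathcal D_m$ for the $2^m$ dyadic subintervals of $[0,T]$ of length $T2^{-m}$, this shows
$$\{L_\theta(B)<\tilde{\vep}\}\ \subseteq\ \bigcup_{n\ge 0}\ \bigcup_{I\in\mathcal D_{n+2}}\Big\{\inf_{|a|=1}\,\sup_{t,t'\in I}|a\cdot B_{t,t'}|<2\tilde{\vep}(T2^{-n})^\theta\Big\}.$$

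\textbf{Step 2: estimate one piece with Lemma \ref{6.12}.} For $I=[c,c+\delta]$ with $\delta=T2^{-n-2}$ one has $\sup_{t,t'\in I}|a\cdot B_{t,t'}|\ge\sup_{t\in I}|a\cdot B_{c,t}|$, and by stationarity of the increments of $G$-Brownian motion $(B_{c+\cdot}-B_c)|_{[0,\delta]}$ has the same sublinear law as $B|_{[0,\delta]}$. Hence Lemma \ref{6.12}, read on the interval $[0,\delta]$ (its constants depending only on $d$, and its parameter being $2\tilde{\vep}(T2^{-n})^\theta\le 2\tilde{\vep}T^\theta<1$ precisely by the hypothesis on $\tilde{\vep}$), gives
$$\hc\Big(\inf_{|a|=1}\,\sup_{t,t'\in I}|a\cdot B_{t,t'}|<2\tilde{\vep}(T2^{-n})^\theta\Big)\ \le\ A\big(e^{-c_1\tilde{\vep}^{-2}2^{n(2\theta-1)}}+e^{-c_2\tilde{\vep}^{-2}2^{n(1+2\theta)}}\big),$$
where the exponents come from substituting $\delta=T2^{-n-2}$ into $b\delta\underline{\sigma}^2\vep^{-2}$ and $b\delta^{-1}\bar{\sigma}^{-2}\vep^{-2}$, and $c_1,c_2>0$ depend on $d,T,\bar{\sigma},\underline{\sigma},\theta$.

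\textbf{Step 3: sum and conclude.} Summing over the $2^{n+2}$ intervals in $\mathcal D_{n+2}$ and over $n$, and using countable subadditivity of $\hc$,
$$\hc(L_\theta(B)<\tilde{\vep})\ \le\ A\sum_{n\ge 0}2^{n+2}\big(e^{-c_1\tilde{\vep}^{-2}2^{n(2\theta-1)}}+e^{-c_2\tilde{\vep}^{-2}2^{n(1+2\theta)}}\big).$$
Because $\theta>\frac12$, both exponents grow fast enough in $n$ to absorb the factor $2^{n+2}$; using in addition $\tilde{\vep}^{-2}>4T^{2\theta}$ one factors $e^{-l\tilde{\vep}^{-2}}$ (with $l=\min(c_1,c_2)$) out of each term, leaving a convergent series. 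This yields $\hc(L_\theta(B)<\tilde{\vep})\le K e^{-l\tilde{\vep}^{-2}}$. Finally $\{L_\theta(B)=0\}\subseteq\{L_\theta(B)<\tilde{\vep}\}$ for every $\tilde{\vep}>0$, so letting $\tilde{\vep}\downarrow 0$ gives $\hc(L_\theta(B)=0)=0$; thus $B_\cdot(\omega)$ is $\theta$-H\"older rough $\hc$-q.s. with scale $T/2$.

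\textbf{Main obstacle.} The delicate step is Step 1: organising the passage from the continuum infimum over $(a,s,\vep)$ to a single countable union — in particular fixing the dyadic level so that a full dyadic interval fits inside the one-sided neighbourhood $J$, and handling $J$ abutting the endpoints of $[0,T]$. Once that combinatorics is in place, Step 2 is just a reading of Lemma \ref{6.12} (the role of the bound $\tilde{\vep}<\frac{1}{2T^\theta}$ being to keep its parameter in $(0,1)$ on every scale) and Step 3 is a routine geometric-type summation, possible exactly because $\theta>\frac12$ forces $2^{n(2\theta-1)}\to\infty$.
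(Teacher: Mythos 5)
Your proposal is correct and follows essentially the same route as the paper: both arguments reduce the event $\{L_\theta(B)<\tilde{\vep}\}$ to a countable union over dyadic subintervals of $[0,T]$, apply Lemma \ref{6.12} on each subinterval (via stationarity of increments, with the hypothesis $\tilde{\vep}<\frac{1}{2T^\theta}$ keeping the lemma's parameter below $1$), and sum the resulting geometric-type series using $\theta>\frac12$. The only difference is organizational: the paper channels the dyadic reduction through the auxiliary quantity $D_\theta(B)$ and the pathwise bound $L_\theta(B)\ge \frac12(2T)^{-\theta}D_\theta(B)$, whereas you write the event inclusion directly, which changes nothing of substance.
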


\begin{proof}

Define $D_\theta (B):= \inf_{|a|=1, n\geq 1, k \leq 2^n} \sup_{s,t
\in [\frac{k-1}{2^n}T, \frac{k}{2^n}T]} 2^{\theta n} | (a \cdot
B_{s,t})|.$ Then for any fixed $a ,s,\vep ,$ with $|a|=1,$ $s \in
[0,T],$ and $\vep \in (0,\frac{T}{2}),$ there exist $n , k \in
\mathbb{N} ,$ such that $\frac{T}{2^n} < \vep \leq
\frac{T}{2^{n-1}},$ and $I_{k , n }:=[\frac{k -1}{2^n }T, \frac{k
}{2^{n }}T] \subset \{t: |t-s| \leq \vep\}.$ Moreover, by the
definition of $D_\theta(B),$ there exist $t_1, t_2 \in I_{k , n },$
such that
$$
| (a \cdot B_{t_1,t_2})| \geq 2^{-n \theta} D_\theta (B),
$$
so $t_1 $ or $t_2$(say $ t_1$) satisfies
$$
| (a \cdot B_{s,t_1})| \geq \frac12 2^{-n \theta} D_\theta (B) .
$$
According to the arbitrary choice of $a ,s,\vep ,$ it follows that
$$
L_\theta(B) \geq \frac12 \frac{2^{-n \theta}}{\vep^\theta}
D_\theta(B) \geq \frac12 (\frac{1}{2T})^\theta D_\theta(B).
$$

Finally, with an application of Lemma \ref{6.12}, one arrives at
inequalities
\begin{eqnarray*}
&& \hc(L_\theta(B) < \te) \leq \hc(D_\theta(B) < 2^{1+\theta}
T^\theta \te)\\
&\leq& \sum_{n=1}^{\infty} \sum_{k=1}^{2^n} \hc(\inf_{|a|=1}
\sup_{s,t \in I_{k,n}} | (a \cdot B_{s,t})| \leq
2^{-n\theta}2^{1+\theta}
T^\theta \te )\\
&\leq & \sum_{n=1}^{\infty} 2^n A[ \exp(-b T 2^{-n}
\underline{\sigma}^2 2^{2n\theta}
 (2^{1+\theta}T^\theta \te)^{-2})+ \exp(-b T^{-1} 2^{n}
\bar{\sigma}^{-2} 2^{2n\theta}
 (2^{1+\theta}T^\theta \te)^{-2})]   \\
 &\leq & \sum_{n=1}^{\infty} \tilde{A} \exp(-\tilde{b} n  (2^{1+\theta}T^\theta
 \te)^{-2})\\
 &\leq & K\exp(-l \te^{-2}),
\end{eqnarray*}

in the second last inequality of which, we apply the fact that there
exist positive constants $\tilde{A}, \tilde{b},$ depending on
$\bar{\sigma}, \underline{\sigma}$ and $T,$ such that
\begin{eqnarray*}
n \ln2 +  \tilde{b} n \bar{\vep}^{-2}  \leq  \ln \frac{\tilde{A}}{A}
+ b \bar{\vep}^{-2} (T \underline{\sigma}^2 2^{(2\theta-1)n} \wedge
 \bar{\sigma}^{-2} T^{-1}
2^{(2\theta+1)n} ),
\end{eqnarray*}
holds uniformly over $n \geq 1, \bar{\vep} \in (0, 1).$

\end{proof}

\begin{rem}\quad
According to the above proof, one could see the non-degenerateness
of $G$ is necessary. Furthermore, constants in the above bound are
uniform on the bounds of $\underline{\sigma}^2T$ and
$\bar{\sigma}^{-2}T^{-1}.$

\end{rem}

\begin{coro}\quad
Let $B_t$ a $d$-dimensional $G$-Brownian motion. Then it holds that,
for any $\theta > \frac12,$
\begin{equation}
\overline{\lim_{t\rightarrow s}} \frac{|B_{s,t}|}{|t-s|^\theta } =
\infty,\ \ \forall s \in [0,T], \ \ \ \hc-q.s.,
\end{equation}

\end{coro}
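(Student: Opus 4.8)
The plan is to derive this corollary directly from Proposition \ref{theta}, which already provides the quantitative estimate $\hc(L_\theta(B) < \te) \leq K\exp(-l\te^{-2})$ for $\te \in (0, \frac{1}{2T^\theta})$. First I would observe that the statement to be proved is a qualitative pathwise claim: for quasi-surely all $\omega$, and for every $s \in [0,T]$, one has $\overline{\lim}_{t\to s} |B_{s,t}|/|t-s|^\theta = \infty$. The natural route is to fix some $\theta' \in (\frac12, \theta)$ and show that $B_\cdot(\omega)$ is $\theta'$-H\"older rough $\hc$-q.s., which by definition of $L_{\theta'}$ forces the $\limsup$ in the $\theta'$-scale to be bounded below by a positive constant, and then upgrade from $\theta'$ to $\theta$ since $|t-s|^{\theta'} \geq |t-s|^{\theta}$ for $|t-s| \leq 1$ (so the $\theta$-normalized quotient dominates the $\theta'$-normalized one near $s$).

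The key steps, in order, would be: (1) Apply Proposition \ref{theta} with exponent $\theta'$ (still in $(\frac12,1)$), which yields $\hc(L_{\theta'}(B) < \te) \leq K\exp(-l\te^{-2})$. (2) Take $\te = 1/m$ for integers $m$ large enough that $1/m < \frac{1}{2T^{\theta'}}$; since $\sum_m \exp(-l m^2) < \infty$, a Borel--Cantelli argument in the $G$-framework — using that $\hc$ is a Choquet capacity with countable subadditivity, so $\hc(\bigcap_M \bigcup_{m \geq M} \{L_{\theta'}(B) < 1/m\}) = 0$ — gives $\hc(L_{\theta'}(B) = 0) = 0$, i.e. $L_{\theta'}(B) > 0$, $\hc$-q.s. (3) On the q.s. event $\{L_{\theta'}(B) = L > 0\}$, fix any $s \in [0,T]$; by the definition of $\theta'$-H\"older roughness (Definition of $\theta$-H\"older rough, with $a$ any unit vector, say $a = e_1$, or more carefully taking $a$ adapted to each scale), for every small $\vep$ there exists $t$ with $|t-s| < \vep$ and $|B_{s,t}| \geq |a\cdot B_{s,t}| \geq L\vep^{\theta'}$... — actually one must be slightly careful, since the roughness bound gives $|a \cdot X_{s,t}| \ge L \vep^\theta$ only for a suitable direction $a$ possibly depending on scale; but $|B_{s,t}| \ge |a \cdot B_{s,t}|$ for any unit $a$, so picking a sequence $\vep_k \downarrow 0$ we get $t_k \to s$ with $|B_{s,t_k}|/|t_k - s|^{\theta'} \geq L$, hence $|B_{s,t_k}|/|t_k-s|^\theta = |B_{s,t_k}|/|t_k-s|^{\theta'} \cdot |t_k-s|^{\theta'-\theta} \to \infty$ because $\theta' - \theta < 0$ and $|t_k - s| \to 0$. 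Taking the $\limsup$ over $t \to s$ then gives $+\infty$.

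I expect the main obstacle to be technical care around the interchange of quantifiers ``for all $s$'' versus ``$\hc$-q.s.'': the q.s.\ event $\{L_{\theta'}(B) > 0\}$ is a single event not depending on $s$, which is exactly why the infimum over $s \in [0,T]$ inside the definition of $L_{\theta'}(X)$ is essential — it makes the roughness modulus a single random variable, and once it is positive off a polar set the conclusion holds simultaneously for every $s$. A secondary subtlety is verifying that Borel--Cantelli is legitimate here: one needs only that $\hc$ is countably subadditive and monotone (both hold since $\hc(A) = \sup_{P \in \mathscr{P}^\Gamma} P(A)$), so that $\hc(\limsup_m \{L_{\theta'}(B) < 1/m\}) \le \inf_M \sum_{m \ge M} \hc(L_{\theta'}(B) < 1/m) = 0$; no convergence theorem beyond subadditivity is required, which sidesteps the pathologies noted in the remark after Proposition \ref{gconverge}. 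Everything else — the comparison $|t-s|^{\theta'} \ge |t-s|^\theta$ near $s$ and $|B_{s,t}| \ge |a \cdot B_{s,t}|$ — is elementary.
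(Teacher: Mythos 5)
Your proof is correct and follows essentially the same route as the paper: fix $\theta'\in(\tfrac12,\theta)$, use Proposition \ref{theta} to conclude $L_{\theta'}(B)>0$ $\hc$-q.s., and then pass from the roughness modulus to the $\limsup$ via $|B_{s,t}|\geq|a\cdot B_{s,t}|$ and $|t-s|^{\theta'-\theta}\to\infty$. The only cosmetic difference is that the paper dispenses with the Borel--Cantelli step, noting simply that $\{L_{\theta'}(B)=0\}\subseteq\{L_{\theta'}(B)<\vep\}$ and letting $\vep\to0$ in the exponential bound, so monotonicity of $\hc$ alone suffices.
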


\begin{proof}
Indeed, one only needs to show the result in one-dimensional case.
For any $\theta
> \frac12,$ one can choose $\theta' $ such that $\frac12 < \theta' <
\theta.$ Note that $\hc(L_{\theta'} (B)=0) \leq \hc(L_{\theta'} (B)<
\vep), $ for any $\vep >0.$ According to the above proposition,
$L_{\theta'} (B(\omega))>0 , \ \hc-q.s..$ By the definition of
$L_{\theta'} (B(\omega)),$ it holds that, for any $s \in [0,T],$
$$
\overline{\lim_{t\rightarrow s}} \frac{|B_{s,t}|}{|t-s|^\theta }
\geq \overline{\lim_{t\rightarrow s}}L_{\theta'}
(B)\frac{|t-s|^{\theta'}}{|t-s|^{\theta}}=\infty.
$$

\end{proof}

\begin{example}\quad
Suppose $\bar{\sigma}>1,\underline{\sigma}<\frac12$ and $P^1$ the
law of $\frac{B.}{2}$ under $P^0,$ where $B.$ is the one-dimensional
canonical process and $P^0$ is the Wiener measure. By the
representation theorem for $G$-expectation, one obtains $P^0,P^1 \in
\mathscr{P}.$ Fix any $t\in (0,T],$ and define a measurable set
$$
A=\{ \langle B \rangle_t =t\}.
$$
It is clear that $P^0(A)=1, P^1(A)=0,$ so $P^0,P^1$ are mutually
singular. Following classical methods, it is quite possible to show
that $B$ is $\theta$-H\"{o}lder rough $P^0-a.s.$ and $P^1-a.s..$
However, it is nontrivial to obtain a common null set by classical
stochastic analysis. Note that the capacity $\hc$ could govern
infinitely many such mutually singular measures. This profit could
be quite advantageous when one faces practical problems involving
probability uncertainty.

\end{example}

\begin{coro}\quad
Let $\BB=(B, \B),$ $(Y,Y')(\omega) \in
\FD^{2\alpha}_{B(\omega)}([0,T],\mathcal{L}(\R^d,\R^n)), $ and $Z
\in \oc^\alpha([0,T],\R^n), \hc-q.s..$ Furthermore, suppose $(Y,Y')$
satisfies assumptions in Proposition \ref{itor}. Then denote
$I_t=\int_0^t Y_s dB_s+ \int_0^t Z_s ds,$ and $\mathcal {R}=
1+L_\theta(B)^{-1} + \|\BB\|_{\FC^\alpha} + \|Y,Y'\|_{B,2\alpha}
+|Y_0| +|Y'_0| +\|Z\|_\alpha +|Z_0|.$ One has the inequality
$$
\|Y\|_{\infty} +\|Z\|_{\infty} \leq M \mathcal{R}^q \|I\|_{\infty}
^r \ \ \ \hc-q.s.,
$$
for some constants $M,q,r,$ depending only on $\alpha, \theta, T.$
\\
In particular, if
$$
\int_0^t Y_s dB_s+ \int_0^t Z_s ds = \int_0^t Y'_s dB_s+ \int_0^t
Z'_s ds,
$$
it holds that $Y\equiv Y', \  Z\equiv Z', \ \ \ \hc-q.s..$
\end{coro}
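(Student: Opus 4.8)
The plan is to obtain the estimate by running the deterministic Norris lemma, Theorem~\ref{NRP}, pathwise along the trajectories of $G$-Brownian motion, after using Proposition~\ref{itor} to recast the $G$-It\^o integral as a rough integral and Proposition~\ref{theta} to supply the roughness input. First I would fix the exponent $\theta$ entering $\mathcal R$: since $\alpha\in(\frac13,\frac12)$ we have $2\alpha\in(\frac23,1)$, so the interval $(\frac12,2\alpha)$ is nonempty and I pick any $\theta$ in it; then $\theta\in(\frac12,1)$, so Proposition~\ref{theta} applies and gives $L_\theta(B(\omega))>0$ outside a $\hc$-polar set. The moment bounds for $B$ and $\B$ proved above, together with the Kolmogorov criterion (Theorem~\ref{Kol}), give $\BB=(B,\B)\in\FC^\alpha([0,T],\R^d)$, hence $\|\BB\|_{\FC^\alpha}<\infty$, off a polar set; the standing hypotheses give $(Y,Y')(\omega)\in\FD^{2\alpha}_{B(\omega)}$ and $Z(\omega)\in\oc^\alpha$ off polar sets, so that $\|Y,Y'\|_{B,2\alpha}<\infty$, $\|Z\|_\alpha<\infty$ and $\mathcal R<\infty$, $\hc$-q.s.; and Proposition~\ref{itor} yields $\int_0^t Y_s\,dB_s=\int_0^t Y_s\,d\BB_s$ for all $t$, $\hc$-q.s.

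Since a finite intersection of $\hc$-q.s.\ events is again $\hc$-q.s.\ (its complement is a finite union of polar sets, hence polar), there is a single $\hc$-q.s.\ set $\Omega_0$ on which all of the above hold simultaneously and
$$
I_t = \int_0^t Y_s\,d\BB_s + \int_0^t Z_s\,ds .
$$
On $\Omega_0$, every hypothesis of Theorem~\ref{NRP} holds for the fixed path $\omega$: $\BB(\omega)\in\FC^\alpha$ with $B(\omega)$ $\theta$-H\"older rough for the chosen $\theta<2\alpha$, $(Y,Y')(\omega)\in\FD^{2\alpha}_{B(\omega)}$, $Z(\omega)\in\oc^\alpha$, and $I(\omega)$ has exactly the required form. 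Theorem~\ref{NRP} then gives, for each such $\omega$,
$$
\|Y\|_\infty+\|Z\|_\infty \le M\,\mathcal R^{q}\,\|I\|_\infty^{r},
$$
with $M,q,r$ depending only on $\alpha,\theta,T$; since $\Omega_0$ is $\hc$-q.s., the first assertion follows.

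For the ``in particular'' statement I would apply this to differences. Writing $\widehat Y=Y-Y'$ with Gubinelli derivative the difference of the two Gubinelli derivatives, and $\widehat Z=Z-Z'$, linearity of the controlled-path structure, of the two stochastic integrals and of all the seminorms involved shows that $(\widehat Y,\widehat Y')$ and $\widehat Z$ still satisfy the hypotheses of Proposition~\ref{itor}, while the assumption forces the associated $\widehat I=\int_0^{\cdot}\widehat Y_s\,dB_s+\int_0^{\cdot}\widehat Z_s\,ds$ to vanish identically. Hence $\|\widehat I\|_\infty=0$, and since $r>0$ the displayed bound gives $\|\widehat Y\|_\infty+\|\widehat Z\|_\infty=0$, i.e.\ $Y\equiv Y'$ and $Z\equiv Z'$, $\hc$-q.s.

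I expect the only genuinely delicate point to be the passage from the several $\hc$-q.s.\ statements — Proposition~\ref{itor}'s identification of the It\^o integral with the rough integral, the roughness of Proposition~\ref{theta}, and the membership $\BB\in\FC^\alpha$ — to one common full set on which Theorem~\ref{NRP}, a purely deterministic result with $\omega$-independent constants, can be invoked; once this bookkeeping is carried out the estimate is immediate, and the uniqueness corollary is just the case $\|I\|_\infty=0$.
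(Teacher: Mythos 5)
Your argument is correct and follows essentially the same route as the paper: choose $\theta\in(\frac12,2\alpha)$, invoke Proposition~\ref{theta} for the quasi-sure $\theta$-H\"older roughness of $B$ and Proposition~\ref{itor} to identify the It\^o integral with the rough integral on a common quasi-sure set, then apply the deterministic Norris lemma (Theorem~\ref{NRP}) pathwise. Your added bookkeeping of the polar sets and the treatment of the uniqueness claim by applying the bound to the differences are exactly the details the paper's terse proof leaves implicit.
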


\begin{proof}
For any fixed $\alpha ,$ there exists a constant $\theta \in
(\frac12, 2\alpha).$ According to Proposition \ref{theta}, $B$ is
$\theta$-H\"{o}lder rough, $\hc-q.s.$. By applying Theorem
\ref{NRP}, one could obtain the desired result.

\end{proof}

\begin{rem}\quad
According to the Norris lemma for rough paths, the above version of
Norris lemma in $G$-framework fails to distinguish the integral with
respect to $d\langle B \rangle$ and that with respect to $dt,$
mainly because as a quadratic variation process, $\langle B \rangle$
is no longer rough any more. The distinguish of integrals with
respect to $d\langle B \rangle$ and $ dt$ is done in \cite{S12} by
probabilistic methods. To give a quasi-surely quantitative
distinction between these two integrals, further work may need to be
done in the future.
\end{rem}


\Acknowledgements{This work was supported by National Natural
Science Foundation of China (Grant No. 10871011). S. G. Peng and H.
L. Zhang would thank valuable suggestions of F.L. Wang to this
paper.}


\end{document}